\newcommand{\m}[1]{{\bf{#1}}}
\newcommand{\g}[1]{\boldsymbol #1}
\newcommand{\bb}[1]{\mathbb #1}
\newcommand{\C}[1]{{\cal {#1}}} 
\newcommand{\mbb}[1]{\mathbb{#1}}
\newcommand{\R}{\mbb{R}}
\newcommand{\T}{^{\sf T}}    
\newcommand{\tx}[1]{\textrm{#1}}
\newcommand{\inv}{^{-1}}  
\newcommand{\intk}{^{(k)}}  
\newcommand{\intO}{^{(1)}}  
\newcommand{\intK}{^{(K)}}  
\newcommand{\ds}{\displaystyle} 
\newcommand{\sdag}{^{\dagger}} 
\newcommand{\sddag}{^{\dagger\dagger}} 
\newcommand{\dt}[1]{\textrm{d}{#1}} 
\newcommand{\ddt}[2]{\frac{\textrm{d}{#1}}{\textrm{d}{#2}}} 
\newcommand{\deldt}[2]{\frac{\partial{#1}}{\partial{#2}}} 
\newcommand{\scinot}[1]{\times 10^{#1}} %
\newtheorem{theorem}{Theorem}
\newtheorem{proposition}{Proposition}
\newtheorem{lemma}{Lemma}
\title{\LARGE \bf Integral Form of Legendre-Gauss-Lobatto \\ Collocation for Optimal Control}
\author{Gabriela Abadia-Doyle\thanks{Ph.D.~Candidate, Department of Mechanical and Aerospace Engineering. E-mail: {\tt\small gabadia97@ufl.edu}.} \\ William W. Hager\thanks{Distinguished Professor, Department of Mathematics. E-mail: {\tt\small hager@ufl.edu}.} \\ Anil V. Rao\thanks{Professor, Department of Mechanical and Aerospace Engineering. E-mail: {\tt\small anilvrao@ufl.edu}.} \vspace{12pt}\\{\em University of Florida} \\ {\em Gainesville, FL 32611}}
\date{}
\begin{document}

\maketitle

\begin{abstract}
A new method is described for solving optimal control problems using direct collocation at Legendre-Gauss-Lobatto (LGL) points. The approach of this paper employs a polynomial approximation of the right-hand side vector field of the differential equations.  Using this polynomial approximation of the right-hand side vector field, the LGL collocation method of this paper is then developed in integral form which leads to the following key outcomes. First, the first-order optimality conditions of the LGL integral form are derived, which lead to a full-rank transformed adjoint system and novel costate estimate. Next, a derivative-like form of the LGL collocation method is obtained by multiplying the system by the inverse of an appropriate full-rank block of the integration matrix. The first-order optimality conditions of the LGL derivative-like form are then derived, leading to an equivalent full-rank transformed adjoint system and secondary novel costate estimate which is related to the costate estimate of the integral form via a linear transformation. Then, it is shown that a second integral form can be constructed by including an additional noncollocated support point, but such a point is superfluous and has no impact on the solution to the nonlinear programming problem. Finally, the method is demonstrated on two benchmark problems: a one-dimensional initial value optimal control problem with an analytic solution and a time-variant orbit raising optimal control problem.
\end{abstract}

\section{Introduction}

The goal of an optimal control problem is to optimize a specified performance index subject to dynamic constraints, path constraints, and event constraints (interior point constraints and boundary conditions) \cite{Kirk2004}.  The vast majority of optimal control problems do not have analytic solutions.  Consequently, most optimal control problems must be solved numerically.  Numerical methods for solving optimal control problems fall into two broad categories: indirect methods and direct methods.  In an indirect method, the variational first-order optimality conditions are derived, leading to a Hamiltonian boundary value problem which is then solved to obtain an extremal.  In a direct method, the control or both the state and the control of the optimal control problem are parameterized, leading to a finite-dimensional approximation in the form of a nonlinear programming problem (NLP).  This NLP is then solved numerically using well-known optimization techniques \cite{Biegler2008,Gill2002}. 

References~\cite{Biral2016,Taheri2016,Mall2022} provide recently developed approaches for solving optimal control problems based on indirect methods.  These approaches are often quite challenging for several reasons.  First, deriving the first-order optimality conditions is often tedious and error-prone, and sometimes may not be possible (for example, a problem where it is necessary to interpolate tabular data).  Furthermore, indirect methods suffer from relatively small radii of convergence due to the instability of Hamiltonian systems.  Consequently, a good initial guess is often required in order to obtain a solution to the first-order optimality conditions \cite{Bryson1975}. However, determining a good initial guess can be particularly challenging due to the nonintuitive nature of the costate. 

Different from indirect methods, direct methods are often more straightforward to formulate and implement, and in many cases, they allow for a more general problem formulation. It is noted, however, that relating the solution obtained via a direct method to the continuous necessary conditions for optimality relies on the ability to obtain an accurate costate approximation  \cite{Seywald1996,Gong2008b,Garg2010,Garg2011a,Garg2011b,Darby2011sep}.  Moreover, inaccuracies in the costate approximation can lead to inaccuracies in the state and control \cite{Garg2010}.

Over the past few decades, the particular class of {\em direct collocation methods} have become popular for solving optimal control problems \cite{Betts1998a,RaoSurvey,Dahlquist2003}. In a direct collocation method, both the state and control are parameterized at specified points called {\em collocation points} or {\em nodes} and the optimal control problem is approximated as a finite-dimensional sparse NLP.  Direct collocation methods are of particular interest when solving optimal control problems \cite{Hull1997}, in part due to the typical improvement in numerical stability of implicit versus explicit simulation methods. Reference~\cite{Betts2020} categorizes these methods as either Gauss, Radau, or Lobatto, where the collocation points lie, respectively, on either the open interval $\tau\in(-1,+1)$ (Gauss), the half-open interval $\tau\in[-1,+1)$ or $\tau\in(-1,+1]$ (Radau), or the closed interval $\tau\in[-1,+1]$ (Lobatto). 

Early implementations of Lobatto collocation methods followed from relatively low-order rules from the Gauss-Lobatto family, such as the trapezoid rule and Simpson's rule \cite{Ascher1995}. Higher-order Gauss-Lobatto quadrature rules (up to fifth degree) were then explored using collocation point selection based on a family of Jacobi polynomials, resulting in an improvement in computational efficiency and solution accuracy \cite{Herman1996}. Building on these early methods, fourth-order compressed and separated forms of the Hermite-Simpson method were developed to further exploit sparsity properties in the problem differential equations and reduce the problem dimension \cite{Betts1999}. The family of Hermite-Lobatto collocation methods have been shown to achieve relatively high accuracy solutions for problems in which the system dynamics are well-behaved and stability for long-duration integration is required \cite{Williams2009,Liu2014,Pezent2024}. 

In recent years, the class of {\em Gaussian quadrature orthogonal collocation methods} have become increasingly popular for solving optimal control problems. Well-developed Gaussian quadrature orthogonal collocation methods employ Legendre-Gauss (LG) points\cite{Benson2006,Garg2010,Garg2011a}, Legendre-Gauss-Radau (LGR) points\cite{Garg2011a,Garg2011b}, or Legendre-Gauss-Lobatto (LGL) points\cite{Elnagar1995,Fahroo2001,Fahroo2008,Gong2008a,Garrido2023}.  While these methods may appear to be similar, the LG and LGR methods described in Refs.~\cite{Benson2006,Garg2010,Garg2011a,Garg2011b} along with the LGL method of Ref.~\cite{Garrido2023} differ fundamentally from the LGL method developed in Refs.~\cite{Elnagar1995,Fahroo2001,Fahroo2008,Gong2008a}.  In particular, the LG and LGR methods given in Refs.~\cite{Benson2006,Garg2010,Garg2011a,Garg2011b} along with the LGL method of Ref.~\cite{Garrido2023} employ polynomial state approximations of degree $N$ using $N$ quadrature points (LG, LGR, or LGL points) together with an additional support point.  The approximations used in these three methods lead to non-square full-rank differentiation matrices, and it was shown for the LG and LGR methods of Refs.~\cite{Benson2006,Garg2010,Garg2011a,Garg2011b} that the derivative form has an equivalent integral form.  On the other hand, the LGL method of Refs.~\cite{Elnagar1995,Fahroo2001,Fahroo2008,Gong2008a} employs a polynomial state approximation of degree $N-1$ using just the $N$ LGL points.  This state approximation leads to a square rank-deficient differentiation matrix and the method has no equivalency between the differential and integral forms.  Furthermore, the transformed adjoint systems of the LG and LGR methods of Refs.~\cite{Benson2006,Garg2010,Garg2011a,Garg2011b} along with the LGL method given in Ref.~\cite{Garrido2023} are full-rank while the transformed adjoint system of the LGL method of Refs.~\cite{Elnagar1995,Fahroo2001,Fahroo2008,Gong2008a} is rank-deficient.  As a result, under certain assumptions of smoothness and coercivity, the LG and LGR methods have been proven to be convergent \cite{Hager2016,Hager2018,Hager2019b}. 

This paper describes a new method for solving optimal control problems using collocation at Legendre-Gauss-Lobatto points.  The approach developed in this paper differs significantly from the approaches used for both the LG and LGR collocation methods and the approaches used for either of the previously developed LGL methods.  While these previously developed methods were based on differentiating a polynomial state approximation, the approach of this paper employs a polynomial approximation of the right-hand side vector field of the differential equations.  The integrals of this polynomial approximation of the right-hand side vector field from the initial point of the interval to each LGL point are then replaced with a Legendre-Gauss-Lobatto quadrature which results in a square integration matrix whose first row is zero \cite{Axelsson1964} because the first LGL point lies at the start of the interval and, thus, the integral to the first LGL point is zero.  Removing the first row of this square integration matrix \cite{Axelsson1964}, the remaining nonsquare portion of the integration matrix is used to cast the method into a {\em derivative-like} form.  This derivative-like form is obtained by multiplying the integral form by the inverse of the matrix formed from deleting the first column of the nonsquare portion of the integration matrix \cite{Axelsson1964}. The first-order optimality conditions for both the integral form and the derivative-like form are developed, and each of these forms leads to a costate estimate.  It is found via examples and analysis that the state, control, and costate converge exponentially for single-interval problems while for a multiple-interval form, the state and control converge at the superconvergence rate of $\mathcal{O}(h^{2N-2})$.  Finally, using the solution from the method of this paper, a second costate can be computed at the mesh points that converges at the same superconvergence rate as the state and control.  

Using the integral form of the LGL method of this paper,  a second integral form of LGL collocation is derived.  This second integral form of LGL collocation adds to the method developed in this paper an integral from the initial point of a mesh interval to a point distinct from any of the LGL points, and this additional integral is replaced with an LGL quadrature approximation.  Augmenting this additional quadrature approximation to the integral form developed in this paper results in a the second integral form that employs a square and full-rank integration matrix.  Using the fact that this second integration matrix is invertible, it is shown in this paper that this second integral form is equivalent to the derivative form found in Ref.~\cite{Garrido2023}.  Finally, this second integral form of LGL collocation is used to show that the inclusion of the noncollocated point as employed in Ref.~\cite{Garrido2023} is superfluous.  

The contributions of this research are as follows.  First, a new Legendre-Gauss-Lobatto collocation method for the numerical solution of optimal control problems is developed via an appropriate degree Lagrange polynomial approximation of the differential equation vector field.  Then, using this polynomial approximation of the vector field, an integral form of LGL collocation is developed where the dynamics are integrated to each LGL point.  These integrals are then replaced exactly with LGL quadrature in terms of the nonsquare full-rank integration matrix of Ref.~\cite{Axelsson1964}.  The first-order optimality conditions of the resulting nonlinear programming problem are then derived. These first-order optimality conditions then lead to a novel costate estimate.  Furthermore, a related costate estimate is obtained using a derivative-like form of the collocation equations, where this derivative-like form is obtained by multiplying the system by the inverse of an appropriate full-rank block of the integration matrix.  Moreover, the costate estimate obtained in either integral or derivative-like form are then related to one another.  Next, a second integral form of LGL collocation is developed using the integral form of LGL collocation developed in this paper, and this second integral form is shown to be equivalent to the derivative form developed in Ref.~\cite{Garrido2023} where Ref.~\cite{Garrido2023} employs an additional noncollocated point. Furthermore, the second integral form developed in this paper is used to show that the noncollocated point employed in Ref.~\cite{Garrido2023} is superfluous.  

The paper is organized as follows. Section~\ref{sect:Notation} provides the notation and conventions used throughout this paper.  Section~\ref{sect:OCP} describes the unconstrained continuous optimal control problem. Section~\ref{sect:LGL int} derives the integral form of the LGL collocation method via polynomial approximation of the right-hand side vector field of the differential equations of motion. Section~\ref{sect:LGL deriv} reformulates the method of Section~\ref{sect:LGL int} into a derivative-like form. Section~\ref{sect:LGL Connections} discusses the connections between the two forms, and Section~\ref{sect: arxiv comparison and new results} augments the integral form of this paper in order to derive a second integral form.  Section~\ref{sect:examples} demonstrates the method of this paper on two benchmark problems from the open literature and provides an analysis of the solution accuracy. Finally, Section~\ref{sect:conclusion}, provides conclusions on this research.  

\section{Notation and Conventions}\label{sect:Notation}

In this paper, the following notation and conventions will be used. 
First, the following conventions are used to specify certain elements of matrix $\m{A}$:
\begin{equation}\nonumber
    \begin{aligned}
        {A}_{ij} :={} & \tx{element in row \emph{i} and column \emph{j}}, \\\nonumber
        \m{A}_{(:,i)} :={} & \tx{elements in all rows and column \emph{i}}, \\\nonumber
        \m{A}_{(i,:)} :={} & \tx{elements in all columns and row \emph{i}}, \\\nonumber
        \m{A}_{(i:j,k:l)} :={} & \tx{elements in rows \emph{i} through \emph{j} and columns \emph{k} through \emph{l}}.
    \end{aligned}
\end{equation}
If $\m{z}_i\in\R^n$ is an $n$-length vector and $\m{Z}$ is a matrix whose $i^{\tx{th}}$ row is given by $\m{z}_i$, then the notation $\m{Z}_i$ is used to denote \emph{row} $i$ of $\m{Z}$, while $\m{Z}_{i:j}$ is used to denote \emph{rows} $i$ through $j$ of $\m{Z}$ for $j>i$.
If $\m{W}$ is a diagonal matrix with $i^{\tx{th}}$ diagonal element $w_i,~(i=1,\ldots,n)$, then $\m{W}_{j:k}$ denotes the submatrix $\m{W}_{(j:k,j:k)}$ for $k>j$.  Given two matrices $\m{A}$ and $\m{B}$ of compatible dimensions, $\langle \m{A},\m{B}\rangle$ is the Frobenius inner product:
\[
\langle \m{A},\m{B}\rangle = \tx{tr}(\m{A}\T\m{B}),
\]
where the notation $\m{A}\T$ denotes the transpose of a matrix $\m{A}$.
When $\m{A}$ and $\m{B}$ are vectors, this is the standard vector inner product. The boldface symbol $\m{1}$ is used to denote a column vector of all ones, the boldface symbol $\m{0}$ is used to denote either a vector or matrix of all zeros, and $\m{I}_q$ denotes a $q\times q$ identity matrix.

Next, $z(\tau)\in\R$ denotes a scalar function $z$ of the independent variable $\tau$. Then, $\m{z}(\tau)\in\R^n$ denotes a vector function of $\tau$ with dimension $n$, such that  $\m{z}(\tau_i) := [z_1(\tau_i),z_2(\tau_i),\ldots,z_{n-1}(\tau_i),z_n(\tau_i)]$ is a \emph{row} vector and $\tau_i$ is a discrete time step of $\tau$. Furthermore, the derivative of a vector function $\m{z}(\tau)$ with respect to $\tau$, denoted by $\Dot{\m{z}}(\tau)$, is given as $\tx{d}\m{z}(\tau)/\tx{d}\tau := \Dot{\m{z}}(\tau) :=[\Dot{z}_1(\tau),\Dot{z}_2(\tau),\ldots,\Dot{z}_{n-1}(\tau),\Dot{z}_n(\tau)]$. A vector function $\m{z}(\tau)\in\R^n,~n\geq 1$, evaluated at a set of discrete points $\{\tau_i ~|~ i=1,\ldots,N\}$ is defined by
\[ \m{z}(\tau_{1:N}):= \begin{bmatrix}
    z_1(\tau_1) & z_2(\tau_1) & \cdots & z_n(\tau_1) \\
    z_1(\tau_2) & z_2(\tau_2) & \cdots & z_n(\tau_2)\\
    \vdots & \vdots & \ddots & \vdots\\
    z_1(\tau_N) & z_2(\tau_N) & \cdots & z_n(\tau_N)
\end{bmatrix} \in\R^{N\times n}. \]
Now, if $\m{f}:\R^n \rightarrow \R^m$ is a vector function of $\m{x}\in\R^n$, then $\nabla_x (\m{f})$ is the $m\times n$ Jacobian matrix whose $i^{\tx{th}}$ row is $\nabla_{x} (f_i)$. That is, the gradient of a scalar-valued function $f:\R^n\rightarrow\R$ is a row vector given by 
\[
\nabla_{x} (f)  = \left[ \deldt{f}{x_1}, \deldt{f}{x_2}, \ldots, \deldt{f}{x_n} \right] \in \R^n,
\]
and the Jacobian of a vector-valued function $\m{f}:\R^n\rightarrow \R^m$ is 
\[
\nabla_{x} (\m{f})  = \left[ \deldt{f_1}{\m{x}}, \deldt{f_2}{\m{x}}, \ldots, \deldt{f_m}{\m{x}} \right]\T \in \R^{m\times n}.
\]
Finally, if $f:\R^{m\times n}\rightarrow\R$ and input $\m{X}$ is an $m\times n$ matrix, then $\nabla_X (f)$ is the $m\times n$ matrix whose $(i,j)$ element is given by $(\nabla_{X} (f))_{ij} = \partial f(\m{X}_i)/\partial X_{ij}$.

\section{Continuous Optimal Control Problem}\label{sect:OCP}
To simplify the derivation, consider the following unconstrained optimal control problem on the domain $\tau\in[-1,+1]$. 
\begin{equation}\label{OCP}
    \tx{minimize } \Phi(\m{x}(+1)) \tx{ subject to } \left\{ \begin{aligned}
        \Dot{\m{x}}(\tau) &= \m{f}(\m{x}(\tau),\m{u}(\tau)), \\
        \m{x}(-1) &= \m{x}_0,
    \end{aligned} \right.
\end{equation}
where $\m{x}(\tau)\in\R^{n_x}$ is the state, $\m{u}(\tau) \in \R^{n_u}$ is the control, the functions $\Phi$ and $\m{f}$ are defined by the mappings $\Phi:\R^{n_x}\rightarrow \R$ and $\m{f}:\R^{n_x} \times \R^{n_u} \rightarrow \R^{n_x}$, and $\m{x}_0$ is the initial condition, which is assumed to be given. It is noted that the computational domain $\tau\in[-1,+1]$ can be transformed to the time interval $t\in[t_0,t_f]$ via the affine transformation
\begin{equation}\label{affine trans}
    t = \frac{t_f-t_0}{2}\tau + \frac{t_f+t_0}{2}.
\end{equation}

Following Pontryagin's minimum principle \cite{Pontryagin1962}, the first-order optimality conditions of the continuous-time optimal control problem in Eq.~\eqref{OCP} are given by
\begin{align}\label{1st order opt x}
    \dot{\m{x}}(\tau) &= \m{f}(\m{x}(\tau),\m{u}(\tau)), \\ \label{1st order opt lambda}
    \dot{\g{\lambda}}(\tau) & = -\nabla_x \left\langle \g{\lambda}(\tau),\m{f}(\m{x}(\tau),\m{u}(\tau)) \right\rangle, \\ \label{1st order opt trans}
    \g{\lambda}(+1) &= \nabla \Phi(\m{x}(+1)), \\ \label{1st order opt u}
    \m{0} &= \nabla_u \left\langle \g{\lambda}(\tau),\m{f}(\m{x}(\tau),\m{u}(\tau)) \right\rangle,
\end{align}
where $\g{\lambda}\in\R^{n_x}$ is the costate.

\section{Integral Form of Legendre-Gauss-Lobatto Collocation}\label{sect:LGL int}
First, the right-hand side vector field $\m{f}(\m{x}(\tau),\m{u}(\tau))$ in Eq.~\eqref{1st order opt x} is approximated as a polynomial of degree at most $N-1$ using a basis of Lagrange polynomials supported at the $N$ LGL points $(\tau_1,\ldots,\tau_N)\in[-1,+1]$.  First, let $\m{X}_i\approx \m{x}(\tau_i)\in\R^{n_x}$ and $\m{U}_i\approx \m{u}(\tau_i)\in\R^{n_u},~(i=1,\ldots,N),$ be the approximations of the state and the control, respectively, at each of the $N$ LGL points, $(\tau_1,\ldots,\tau_N)$. Next, let $\hat{\m{f}}$ be an approximation of $\m{f}(\m{x}(\tau),\m{u}(\tau))$ from Eq.~\eqref{1st order opt x} such that
\begin{equation}\label{f approx}
    \m{f}(\m{x}(\tau),\m{u}(\tau)) \approx \hat{\m{f}}(\tau)= \sum_{j=1}^N \m{F}_jL_j(\tau), \quad \m{F}_j = \m{f}(\m{X}_j,\m{U}_j),
\end{equation}
where
\begin{equation}\label{Lagrange basis for f}
    L_j(\tau) = \prod_{\substack{i = 1 \\ i \neq j}}^N \frac{\tau - \tau_i}{\tau_j - \tau_i}, \quad (j=1,\ldots,N),
\end{equation}
are the Lagrange polynomials of degree $N-1$ with $N\geq 2$. Furthermore, the Lagrange polynomials satisfy the isolation property
\begin{equation}
    L_j(\tau_i) = \delta_{ij} = \begin{cases}
        1, & i=j, \\ 0, & i\neq j.
    \end{cases}
\end{equation}

Now, because $\hat{\m{f}}$ is a polynomial of degree $N-1$ with $N\geq 2$, its integral from $-1$ to $\tau_i$ can be evaluated exactly by LGL quadrature such that 
\begin{equation}\label{int approx f}
    \int_{-1}^{\tau_i} \hat{\m{f}}(\tau)\dt{\tau} = \sum_{j=1}^N A_{ij} \m{F}_j, \quad (i=1,\ldots,N),
\end{equation}
where 
\begin{equation}\label{Axelsson integration matrix elements}
    A_{ij} := \int_{-1}^{\tau_i} L_j(\tau)\dt{\tau}, \quad (i,j=1,\ldots,N),
\end{equation}
are the $(i,j)$-elements of the $N\times N$ integration matrix $\m{A}$ originally defined by Axelsson\cite{Axelsson1964}. More precisely, $\m{A}$ is an integration matrix for the space of polynomials of degree at most $N-1$.

Next, if the state derivative is approximated as $\dot{\m{x}}(\tau) \approx \dot{\m{X}}(\tau)$ and $\m{X}_i$ is defined by $\m{X}_i:=\m{X}(\tau_i)$, then
\begin{equation}
\begin{aligned}\label{int approx xdot}
    \int_{-1}^{\tau_i} \dot{\m{X}}(\tau)\dt{\tau} &= \m{X}(\tau_i) - \m{X}(-1), \\
    &= \m{X}_i - \m{X}_1.
\end{aligned}
\end{equation}
Thus, using the results from Eq.~\eqref{int approx f} and Eq.~\eqref{int approx xdot}, the state at $\tau_i$ is approximated by
\begin{equation}\label{xdot integrated}
    \m{X}_i = \m{X}_1 + \sum_{j=1}^N A_{ij} \m{F}_j, \quad (i=1,\ldots,N).
\end{equation}
The first row of $\m{A}$ is zero because the integration from $-1$ to $\tau_1=-1$ is itself zero. Therefore, Eq.~\eqref{xdot integrated} can be compactly written with this triviality removed as
\begin{equation}
    \begin{bmatrix}
        \m{X}_2 \\ \vdots \\ \m{X}_N
    \end{bmatrix} = \m{1X}_1 + \tilde{\m{A}} \begin{bmatrix}
        \m{F}_1 \\ \vdots \\ \m{F}_N
    \end{bmatrix},
\end{equation}
which can be written equivalently as
\begin{equation}\label{LGL integral form}
    \m{X}_{2:N} = \m{1}\m{X}_{1} + \tilde{\m{A}}\m{F},
\end{equation}
where $\tilde{\m{A}}:= \m{A}_{(2:N,:)}\in\R^{(N-1)\times N}$ is the submatrix that is formed by omitting the first row of $\m{A}$. 

The finite-dimensional nonlinear programming problem (NLP) associated with the integral form of the presented LGL method is then given as 
\begin{equation}\label{integral NLP}
    \tx{minimize } \Phi(\m{X}_{N}) \tx{ subject to } \left\{ \begin{aligned}
        \m{0} &= \begin{bmatrix} \m{1} & -\m{I}_{N-1} \end{bmatrix} \m{X} + \tilde{\m{A}}\m{F}, \\ \m{X}_{1} &= \m{x}_0,
    \end{aligned}\right.
\end{equation}
where $\m{F}=\m{f}(\m{X},\m{U})\in\R^{N\times n_x}$ denotes the right-hand side of the state dynamics in Eq.~\eqref{1st order opt x} evaluated at the LGL points, and $\m{X}\in\R^{N\times n_x}$ and $\m{U}\in\R^{N\times n_u}$ denote matrices of the state and control approximations, respectively, at the LGL points.  That is, $\m{X}_i\in\R^{n_x}$ is the $i^{\tx{th}}$ row of $\m{X}$, $\m{U}_i\in\R^{n_u}$ is the $i^{\tx{th}}$ row of $\m{U}$, and $\m{F}_i:=\m{f}(\m{X}_i,\m{U}_i)\in\R^{n_x}$ is the $i^{\tx{th}}$ row of $\m{F}$.

\subsection{KKT System of the LGL Integral Form}

The Lagrangian of the NLP given by Eq.~\eqref{integral NLP} is then
\begin{equation}\label{integral Lagrangian}
\C{L} = \Phi(\m{X}_N) + \left\langle \m{M}, \begin{bmatrix} \m{1} & -\m{I}_{N-1} \end{bmatrix} \m{X} + \tilde{\m{A}}\m{F} \right\rangle + \left\langle \g{\mu}, \m{x}_0 - \m{X}_1\right\rangle,
\end{equation}
where $\m{M}\in\R^{(N-1)\times n_x}$ and $\g{\mu}\in\R^{n_x}$ are NLP Lagrange multipliers corresponding to the dynamics constraints and initial conditions, respectively. 

The Karush-Kuhn-Tucker (KKT) optimality conditions for the NLP of Eq.~\eqref{integral NLP} are then found by setting the partial derivatives of the Lagrangian in Eq.~\eqref{integral Lagrangian} equal to zero. The KKT conditions are given as
\begin{align}\label{integral KKT split x1}
    \g{\mu} &= \m{1}\T\m{M} + \nabla_{X_1}\left\langle \m{M}, \tilde{\m{A}}_{(:,1)}\m{F}_1 \right\rangle, \\ \label{integral KKT split x2thruN}
    \m{M}_{} &= \nabla_{X_{2:N}} \left\langle \m{M}, \tilde{\m{A}}_{(:,2:N)}\m{F}_{2:N}  \right\rangle + \tilde{\m{e}}_{N-1}\nabla_{X_N}\Phi(\m{X}_N), \\ \label{integral KKT split u}
    \m{0}&= \nabla_U \left\langle \m{M}, \tilde{\m{A}}\m{F} \right\rangle,
\end{align}
where $\tilde{\m{e}}_{N-1}$ is the last column of the $\m{I}_{N-1}$ identity matrix. 

\subsection{Transformed KKT System of the LGL Integral Form}\label{subsect: transformed KKT int}
Let $\m{W}$ denote the $N\times N$ diagonal matrix with $i^{\tx{th}}$ diagonal element $w_i$, where $w_i,~(i=1,\ldots,N),$ is the $i^{\tx{th}}$ LGL quadrature weight. Next, let $\g{\Lambda}\in\R^{N\times n_x}$ denote the costate approximation at the LGL points, defined as
\begin{equation}\label{Lambda int}
    \g{\Lambda} = \m{W}\inv \tilde{\m{A}}\T\m{M}.
\end{equation}
Although the dimension of the NLP Lagrange multipliers corresponding to the state dynamics integration is $(N-1)\times n_x$, the dimension of the costate approximation is $N\times n_x$. That is, the costate is approximated at all $N$ LGL points.

Now, in order to connect the discrete costate equations given by Eqs.~\eqref{integral KKT split x1}-\eqref{integral KKT split x2thruN} to the continuous costate equations given by Eq.~\eqref{1st order opt lambda}, an $N\times N$ matrix $\m{A}\sdag$, which is a modified version of $\m{A}$, is defined as follows:
\begin{equation}\label{Adag}
    A\sdag_{ij} = w_j - \frac{w_j}{w_i}(1-{\delta}_{Ni})A_{ji} , \quad  (i,j=1,\ldots,N).
\end{equation}
It is noted that $A\sdag_{ij},~(i=1,\ldots,N-1;j=1,\ldots,N),$ follows from the integration matrix of the transformed adjoint system for symplectic Runge-Kutta schemes derived by Hager \cite{Hager2025}. Additionally, $A\sdag_{Nj}\equiv A_{Nj} = w_j,~(j=1,\ldots,N)$, are equal to the LGL quadrature weights. The transformed adjoint system corresponding to the integral form of the LGL collocation scheme developed in this work is then
\begin{equation}\label{integral transformed adj}
    \g{\Lambda} = \m{1}\g{\Lambda}_1 + {\m{A}}\sdag \left[ -\nabla_{X}\left\langle \g{\Lambda}, \m{F}\right\rangle + \frac{\m{e}_1}{w_1}(\g{\mu} - \g{\Lambda}_1)+\frac{\m{e}_N}{w_N}(\g{\Lambda}_N - \nabla \Phi(\m{X}_N)) \right],
\end{equation}        
where $\m{e}_1$ and $\m{e}_N$ are the first and last columns, respectively, of $\m{I}_{N}$, and $\m{A}\sdag\in\R^{N\times N}$ is shown to be an integration matrix for the space of polynomials of degree at most $N-3$ in Section~\ref{subsect: transformed adjoint relation}. Substituting Eqs.~\eqref{Lambda int}-\eqref{Adag} into Eq.~\eqref{integral transformed adj}, the KKT conditions of Eqs.~\eqref{integral KKT split x1}-\eqref{integral KKT split x2thruN} are obtained. 

Equation~\eqref{integral transformed adj} approximates the costate $\g{\lambda}(\tau_i)\approx \g{\Lambda}_i,~(i=1,\ldots,N)$, by integrating the discrete costate dynamics from -1 to $\tau_i$, assuming the discrete transversality condition $\g{\Lambda}(+1)=\nabla \Phi(\m{X}_N)$ is satisfied. If $\g{\Lambda}(+1)=\nabla \Phi(\m{X}_N)$ is satisfied, then the continuous transversality condition of Eq.~\eqref{1st order opt trans} is satisfied. Furthermore, satisfying the condition $\g{\mu}=\g{\Lambda}_1$ implies that the continuous costate $\g{\lambda}(-1)$ is free, exactly as in the continuous optimality conditions given in Eqs.~\eqref{1st order opt x}-\eqref{1st order opt u}. In practice, if the continuous costate solution is smooth but is not a polynomial, $\g{\Lambda}(+1) \approx \nabla \Phi(\m{X}_N)$ and $\g{\mu}\approx\g{\Lambda}_1$ as $N$ is increased. As the expressions $(\g{\mu}-\g{\Lambda}_1)$ and $(\g{\Lambda}_N - \nabla \Phi(\m{X}_N))$ both approach zero, then $-\nabla_X \left\langle \g{\Lambda},\m{F} \right\rangle = \dot{\g{\Lambda}}$ becomes an increasingly better approximation of the continuous costate dynamics $\dot{\g{\lambda}}$ in Eq.~\eqref{1st order opt lambda}. Lastly, the transformed KKT condition
\begin{equation}
    \m{0} = \nabla_U \left\langle \g{\Lambda}, \m{F}\right\rangle,
\end{equation}
maps to the continuous optimal control law given by Eq.~\eqref{1st order opt u}.

\section{Derivative-Like Form of Legendre-Gauss-Lobatto Collocation}\label{sect:LGL deriv}
The integral form of the LGL collocation method in Eq.~\eqref{xdot integrated} cannot be written in an equivalent differentiation form because $\m{A}\in\R^{N\times N}$ is singular (recall, the first row of $\m{A}$ is zero and $\tx{rank}(\m{A})=N-1$). However, the submatrix $\tilde{\m{A}}_{(:,2:N)}\in\R^{(N-1)\times(N-1)}$ is rank $N-1$ and can therefore be inverted to transform Eq.~\eqref{LGL integral form} into a {\em derivative-like form}, where this derivative-like form closely resembles a derivative form. Left-multiplying both sides of Eq.~\eqref{LGL integral form} by $[\tilde{\m{A}}_{(:,2:N)}]\inv$ and rearranging, the LGL collocation method of this paper can be written equivalently as
\begin{equation}\label{pseudoderiv defect}
    \m{E}\begin{bmatrix}
        \m{X}_1 \\ \vdots \\ \m{X}_N
    \end{bmatrix} = \begin{bmatrix}
        [\tilde{\m{A}}_{(:,2:N)}]\inv \tilde{\m{A}}_{(:,1)} & \m{I}_{N-1}
    \end{bmatrix} \begin{bmatrix}
        \m{F}_1 \\ \vdots \\ \m{F}_N
    \end{bmatrix}, \quad \m{E}:= [\tilde{\m{A}}_{(:,2:N)}]\inv\begin{bmatrix}
        -\m{1} & \m{I}_{N-1}
    \end{bmatrix},
\end{equation}
where $\m{E}\in\R^{(N-1)\times N}$ is generally {\emph{not}} a differentiation matrix because $\m{Ey}\neq \dot{\m{y}}$ for any polynomial $\m{y}$ unless the derivative vanishes at $\tau_1$. The matrix $\m{E}$ has the following properties: (a) $\m{E1}=\m{0}$ and (b) $\m{E}_{(:,2:N)}\m{A}_{(2:N,2:N)}=\m{I}_{N-1}$.

\begin{lemma}\label{lemma E1=0}
    $\m{E1}=\m{0}$; equivalently, $-\m{E}_{(:,2:N)}\m{1}=\m{E}_{(:,1)}$.
\end{lemma}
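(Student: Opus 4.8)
The plan is to read the identity directly off the definition $\m{E}:=[\tilde{\m{A}}_{(:,2:N)}]\inv[\,-\m{1}\ \ \m{I}_{N-1}\,]$ given in Eq.~\eqref{pseudoderiv defect}, being careful that the two occurrences of $\m{1}$ in the statement have different lengths: the $\m{1}$ sitting to the right of $\m{E}$ has length $N$, whereas the $\m{1}$ appearing inside the block $[\,-\m{1}\ \ \m{I}_{N-1}\,]$ and in the rewritten form $-\m{E}_{(:,2:N)}\m{1}=\m{E}_{(:,1)}$ has length $N-1$.

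First I would compute the action of the block matrix $[\,-\m{1}\ \ \m{I}_{N-1}\,]\in\R^{(N-1)\times N}$ on the length-$N$ all-ones vector. Its first column contributes $-\m{1}$ (length $N-1$), and its remaining $N-1$ columns, which form $\m{I}_{N-1}$, contribute $\m{I}_{N-1}\m{1}=\m{1}$; adding these gives $\m{0}$. Hence $[\,-\m{1}\ \ \m{I}_{N-1}\,]\m{1}=\m{0}$. Left-multiplying by $[\tilde{\m{A}}_{(:,2:N)}]\inv$, which exists because $\tilde{\m{A}}_{(:,2:N)}\in\R^{(N-1)\times(N-1)}$ has rank $N-1$ as noted just before the lemma, yields $\m{E}\m{1}=[\tilde{\m{A}}_{(:,2:N)}]\inv\m{0}=\m{0}$. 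For the equivalent formulation, split the product by columns as $\m{E}\m{1}=\m{E}_{(:,1)}\cdot 1+\m{E}_{(:,2:N)}\m{1}$ (now with $\m{1}$ of length $N-1$); setting this to $\m{0}$ and rearranging gives $\m{E}_{(:,1)}=-\m{E}_{(:,2:N)}\m{1}$.

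There is no genuine obstacle here: the claim is an immediate consequence of the definition of $\m{E}$ together with the invertibility of $\tilde{\m{A}}_{(:,2:N)}$ already established in the text, and the only thing to watch is the bookkeeping of the differing vector dimensions. If desired, I would append a one-line remark that this is the discrete analog of the statement that the derivative of a constant vanishes: since $\m{E}$ acts as a shifted differentiation operator, it must annihilate the sampled constant function $\m{1}$.
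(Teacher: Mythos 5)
Your proof is correct and follows essentially the same route as the paper's: both read the identity directly off the definition $\m{E}=[\tilde{\m{A}}_{(:,2:N)}]\inv\begin{bmatrix}-\m{1} & \m{I}_{N-1}\end{bmatrix}$, observe that the two blocks cancel when applied to the all-ones vector, and then rearrange $\m{E1}=\m{E}_{(:,1)}+\m{E}_{(:,2:N)}\m{1}$ to get the equivalent form. Your extra care with the differing lengths of the two $\m{1}$ vectors is a fine touch but does not change the argument.
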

\begin{proof}
    $\m{E}$, defined in Eq.~\eqref{pseudoderiv defect}, is given by $\m{E}:=\begin{bmatrix}
        -[\tilde{\m{A}}_{(:,2:N)}]\inv\m{1} & [\tilde{\m{A}}_{(:,2:N)}]\inv
    \end{bmatrix}$. Then, 
    \begin{equation}
        \begin{aligned}
            \m{E1} &= \begin{bmatrix}
        -[\tilde{\m{A}}_{(:,2:N)}]\inv\m{1} & [\tilde{\m{A}}_{(:,2:N)}]\inv
    \end{bmatrix}\m{1} \\
            &= -[\tilde{\m{A}}_{(:,2:N)}]\inv\m{1} +[\tilde{\m{A}}_{(:,2:N)}]\inv\m{1} \\
            &= \m{0}.
        \end{aligned}
    \end{equation}
    Furthermore, $\m{E1}\equiv\m{E}_{(:,1)}+\m{E}_{(:,2:N)}\m{1}$ which implies $-\m{E}_{(:,2:N)}\m{1}=\m{E}_{(:,1)}$.
\end{proof}

\begin{lemma}\label{lemma EA=I}
    $\m{E}_{(:,2:N)}\m{A}_{(2:N,2:N)}=\m{I}_{N-1}$; equivalently, $\m{E}_{(i,2:N)}\m{A}_{(2:N,2:N)}=\tilde{\m{e}}_i\T, ~(i=1,\ldots,N-1)$.
\end{lemma}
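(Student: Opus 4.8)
The plan is to show $\m{E}_{(:,2:N)} = [\tilde{\m{A}}_{(:,2:N)}]\inv$ directly from the definition in Eq.~\eqref{pseudoderiv defect}, and then establish that $[\tilde{\m{A}}_{(:,2:N)}]\inv \m{A}_{(2:N,2:N)} = \m{I}_{N-1}$ by recognizing that $\tilde{\m{A}}_{(:,2:N)}$ and $\m{A}_{(2:N,2:N)}$ are in fact the same matrix. First I would recall that $\tilde{\m{A}} := \m{A}_{(2:N,:)}$, so $\tilde{\m{A}}_{(:,2:N)} = \m{A}_{(2:N,:)}$ restricted to columns $2$ through $N$, which is exactly $\m{A}_{(2:N,2:N)}$. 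Hence $[\tilde{\m{A}}_{(:,2:N)}]\inv \m{A}_{(2:N,2:N)} = [\m{A}_{(2:N,2:N)}]\inv \m{A}_{(2:N,2:N)} = \m{I}_{N-1}$, provided the inverse exists.

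Next I would address invertibility of $\m{A}_{(2:N,2:N)}$, since the whole derivative-like construction hinges on it. The matrix $\m{A}$ has rank $N-1$ (its first row is zero but the remaining $(N-1)\times N$ block $\tilde{\m{A}}$ has full row rank $N-1$). To get invertibility of the square block $\tilde{\m{A}}_{(:,2:N)} = \m{A}_{(2:N,2:N)}$, I would argue that the first column $\m{A}_{(:,1)} = \tilde{\m{A}}_{(:,1)}$ (after dropping the trivial first entry) lies in the column space of $\tilde{\m{A}}_{(:,2:N)}$, which it must since $\tilde{\m{A}}$ has full row rank $N-1$ and only $N-1$ rows; alternatively, one can use the fact that $\m{A}$ is an integration matrix for polynomials of degree $\le N-1$ together with the isolation property of the Lagrange basis $L_j$ to compute $\m{A}$ explicitly and check that the trailing principal block is nonsingular. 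Since the paper already asserts in the text preceding Lemma~\ref{lemma E1=0} that ``$\tilde{\m{A}}_{(:,2:N)}\in\R^{(N-1)\times(N-1)}$ is rank $N-1$ and can therefore be inverted,'' I would simply invoke that fact rather than reprove it.

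Finally, I would translate the matrix identity into the row-wise statement: since $\m{I}_{N-1}$ has $i$th row $\tilde{\m{e}}_i\T$ (the $i$th standard basis row vector of length $N-1$), reading off row $i$ of both sides of $\m{E}_{(:,2:N)}\m{A}_{(2:N,2:N)} = \m{I}_{N-1}$ gives $\m{E}_{(i,2:N)}\m{A}_{(2:N,2:N)} = \tilde{\m{e}}_i\T$ for each $i = 1,\ldots,N-1$, which is the equivalent form claimed in the lemma.

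The only genuine subtlety — and thus the main obstacle — is the identification $\tilde{\m{A}}_{(:,2:N)} = \m{A}_{(2:N,2:N)}$; once this bookkeeping on submatrix indices is made explicit, the lemma is immediate from the definition of $\m{E}$, and no real computation is required. If one instead wanted a self-contained proof of invertibility, the work would shift to computing the entries $A_{ij} = \int_{-1}^{\tau_i} L_j(\tau)\,\dt{\tau}$ and showing $\det \m{A}_{(2:N,2:N)} \neq 0$, but given the preceding text this can be taken as established.
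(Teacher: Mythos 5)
Your proposal is correct and follows essentially the same route as the paper: identify $\tilde{\m{A}}_{(:,2:N)} = \m{A}_{(2:N,2:N)}$, read off $\m{E}_{(:,2:N)} = [\tilde{\m{A}}_{(:,2:N)}]\inv$ from the definition of $\m{E}$ in Eq.~\eqref{pseudoderiv defect}, and conclude the product is $\m{I}_{N-1}$, with the row-wise form following immediately. Your extra remarks on the invertibility of $\m{A}_{(2:N,2:N)}$ are fine but not needed, since the paper establishes this in the text preceding the lemma.
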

\begin{proof}
    From the definition of $\m{E}$ in Eq.~\eqref{pseudoderiv defect}, $\m{E}_{(:,2:N)}= [\m{A}_{(2:N,2:N)}]\inv$. Thus,
    \begin{equation}
        \begin{aligned}
            \m{E}_{(:,2:N)}\m{A}_{(2:N,2:N)} &= [\m{A}_{(2:N,2:N)}]\inv \m{A}_{(2:N,2:N)} \\ &= \m{I}_{N-1}.
        \end{aligned}
    \end{equation}
    If $\m{E}_{(:,2:N)}$ is replaced with just the $i^{\tx{th}}$ row of $\m{E}$, then $\m{E}_{(i,2:N)}\m{A}_{(2:N,2:N)}=\tilde{\m{e}}_i\T$, where $\tilde{\m{e}}_i\T$ is the $i^{\tx{th}}$ row of the $(N-1)\times(N-1)$ identity matrix, $\m{I}_{N-1}$.
\end{proof}

The finite-dimensional NLP associated with the derivative-like form of the LGL method developed in this paper is then given as
\begin{equation}\label{pseudoderivative NLP}
    \tx{minimize } \Phi(\m{X}_N) \tx{ subject to } \left\{ \begin{aligned}
        \m{0} &= \begin{bmatrix}
        \g{\alpha} & \m{I}_{N-1}
    \end{bmatrix}\m{F} - \m{EX}, \\ \m{X}_1 &= \m{x}_0,
    \end{aligned}\right.
\end{equation}
where $\g{\alpha}\in\R^{(N-1)}$ is defined as
\begin{equation}\label{alpha}
    \g{\alpha} := [\tilde{\m{A}}_{(:,2:N)}]\inv \tilde{\m{A}}_{(:,1)}.
\end{equation}
Although the derivative-like form of Eq.~\eqref{pseudoderivative NLP} is generally not a true derivative form, the associated NLP given by Eq.~\eqref{pseudoderivative NLP} results in a sparser constraint Jacobian and Lagragian Hessian compared with the NLP obtained using the integral form given in Eq.~\eqref{integral NLP}. 

\subsection{KKT System of the LGL Derivative-Like Form}
The Lagrangian of the NLP given by Eq.~\eqref{pseudoderivative NLP} is then
\begin{equation}\label{pseudoderiv Lagrangian}
\C{L} = \Phi(\m{X}_N) + \left\langle \m{S}, \begin{bmatrix}
        \g{\alpha} & \m{I}_{N-1}
    \end{bmatrix}\m{F} - \m{EX} \right\rangle + \left\langle \g{\mu}, \m{x}_0 - \m{X}_1\right\rangle,
\end{equation}
where $\m{S}\in\R^{(N-1)\times n_x}$ and $\g{\mu}\in\R^{n_x}$ are NLP Lagrange multipliers corresponding to the dynamics constraints and initial conditions, respectively. 

The KKT optimality conditions for the NLP of Eq.~\eqref{pseudoderivative NLP} are then found by setting the partial derivatives of the Lagrangian in Eq.~\eqref{pseudoderiv Lagrangian} equal to zero. The KKT conditions are given as
\begin{align}\label{pseudoderiv KKT split x1}
    \m{E}\T_{(:,1)}\m{S} &= \nabla_{X_1}\left\langle \m{S}, \g{\alpha}\m{F}_1 \right\rangle - \g{\mu}, \\ \label{pseudoderiv KKT split x2thruN}
    \m{E}\T_{(:,2:N)}\m{S} &= \nabla_{X_{2:N}} \left\langle \m{S},\m{F}_{2:N}  \right\rangle + \tilde{\m{e}}_{N-1}\nabla_{X_N}\Phi(\m{X}_N), \\ \label{pseudoderiv KKT split u}
    \m{0}&= \nabla_U \left\langle \m{S}, \begin{bmatrix}
        \g{\alpha} & \m{I}_{N-1}
    \end{bmatrix}\m{F} \right\rangle.
\end{align}

\subsection{Transformed KKT System of the LGL Derivative-Like Form}
Let $\g{\Lambda}\in\R^{N\times n_x}$ denote the costate approximation at the LGL points, defined as
\begin{equation}\label{Lambda pseudoderiv}
\begin{aligned}
    \g{\Lambda}_1 &= \frac{1}{w_1} \g{\alpha}\T\m{S},\\
    \g{\Lambda}_{2:N} &= \m{W}_{2:N}\inv \m{S},
\end{aligned}
\end{equation}
where $\m{W}_{2:N}$ is an $(N-1)\times(N-1)$ diagonal matrix with $j^{\tx{th}}$ diagonal element equal to $w_{j+1},~(j=1,\ldots,N-1)$ and $\g{\alpha}$ is defined by Eq.~\eqref{alpha}.  Once again, the dimension of the NLP Lagrange multipliers corresponding to the collocation constraints is $(N-1)\times n_x$ but the dimension of the costate approximation is $N\times n_x$. That is, the costate is approximated at all $N$ LGL points such that $\g{\lambda}(\tau_i)\approx \g{\Lambda}_i,~(i=1,\ldots,N)$.

Next, an explicit relationship for the components of $\g{\alpha}$ will be developed and it will be convenient for this derivation to denote the $N-1$ components of $\g{\alpha}$ by $\alpha_j,\; (j=2,\ldots,N)$.
\begin{lemma}\label{lemma-alpha}
The vector $\g{\alpha}$ defined in $(\ref{alpha})$ is given by
\begin{equation}\label{solution}
\alpha_j = -\frac{\dot{\C{L}}_N(\tau_j)}{\dot{\C{L}}_N(\tau_1)}, \quad (j=2,\ldots,N),
\end{equation}
where $\C{L}_N(\tau)$ is defined as
\begin{equation}\label{Lobatto polynomial}
    \C{L}_N(\tau) = (\tau^2-1)\dot{P}_{N-1}(\tau),
\end{equation}
and $P_{N-1}(\tau)$ is the $(N-1)^{\tx{th}}$-degree Legendre polynomial.
\end{lemma}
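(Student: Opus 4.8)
The plan is to interpret the linear system $(\ref{alpha})$ directly in polynomial terms. By definition, $\g{\alpha} = [\tilde{\m{A}}_{(:,2:N)}]\inv \tilde{\m{A}}_{(:,1)}$, so the vector $\g{\beta} := (-1, \alpha_2, \ldots, \alpha_N)\T$ satisfies $\tilde{\m{A}}\g{\beta} = \m{0}$, i.e. $\sum_{j=1}^N A_{ij}\beta_j = 0$ for $i = 2, \ldots, N$ with $\beta_1 = -1$. Recalling $A_{ij} = \int_{-1}^{\tau_i} L_j(\tau)\,\dt{\tau}$, this says that the degree-$(N-1)$ polynomial $q(\tau) := \sum_{j=1}^N \beta_j L_j(\tau)$ has $\int_{-1}^{\tau_i} q(\tau)\,\dt{\tau} = 0$ for every $i = 2, \ldots, N$. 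Equivalently, its antiderivative $Q(\tau) := \int_{-1}^{\tau} q(s)\,\dt{s}$, a polynomial of degree $N$, vanishes at all $N$ LGL points $\tau_1 = -1, \tau_2, \ldots, \tau_N$ (it vanishes at $\tau_1$ automatically, and at $\tau_2, \ldots, \tau_N$ by the equations). Thus I would first establish that $\g{\alpha}$ is characterized by: there is a degree-$N$ polynomial $Q$ vanishing at all LGL points with $\dot{Q}(\tau_j) = \beta_j = q(\tau_j)$, and in particular $-1 = \beta_1 = \dot{Q}(\tau_1)$, so $\alpha_j = \dot Q(\tau_j) = -\dot Q(\tau_j)/\dot Q(\tau_1)$.

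The second step is to identify $Q$ with (a scalar multiple of) $\C{L}_N(\tau) = (\tau^2-1)\dot P_{N-1}(\tau)$. The key classical fact is that the $N$ LGL points are exactly the roots of $\C{L}_N$: the two endpoints $\pm 1$ are roots of $(\tau^2-1)$, and the $N-2$ interior LGL points are by definition the roots of $\dot P_{N-1}$. Since $\C{L}_N$ has degree $N$ and $N$ distinct roots coinciding with the LGL points, and since a nonzero degree-$N$ polynomial vanishing at $N$ given points is unique up to scaling, we must have $Q(\tau) = c\,\C{L}_N(\tau)$ for some constant $c$ — provided $Q \not\equiv 0$, which holds because $\dot Q(\tau_1) = -1 \neq 0$. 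Hence $\dot Q(\tau) = c\,\dot{\C{L}}_N(\tau)$, and evaluating at $\tau_1$ gives $-1 = c\,\dot{\C{L}}_N(\tau_1)$, so $c = -1/\dot{\C{L}}_N(\tau_1)$ (note $\dot{\C{L}}_N(\tau_1) \neq 0$ since $\tau_1$ is a simple root of $\C{L}_N$). Therefore $\alpha_j = \dot Q(\tau_j) = c\,\dot{\C{L}}_N(\tau_j) = -\dot{\C{L}}_N(\tau_j)/\dot{\C{L}}_N(\tau_1)$, which is $(\ref{solution})$.

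I would present this cleanly by: (i) translating the matrix equation $\tilde{\m{A}}\g{\beta}=\m{0}$ with $\beta_1=-1$ into the statement ``$Q(\tau):=\int_{-1}^{\tau} q$ vanishes at all $N$ LGL points,'' where $q=\sum_j\beta_j L_j$ is the unique degree-$(N-1)$ interpolant of $\g{\beta}$; (ii) invoking that $\C{L}_N$ is the (up to scaling) unique monic-type degree-$N$ polynomial with the LGL points as its roots, and that $Q\not\equiv0$, to conclude $Q=c\,\C{L}_N$; (iii) differentiating and evaluating at $\tau_1=-1$ to pin down $c$ and read off each $\alpha_j$. A small point worth checking is that the interpolation/counting argument uses that $\dim\{\text{polys of degree}\le N-1\}=N$ and that the map $q\mapsto(\int_{-1}^{\tau_i}q)_{i=2}^N$ together with the value $q(\tau_1)$ (equivalently, the well-posedness of the original linear system, i.e. invertibility of $\tilde{\m{A}}_{(:,2:N)}$) guarantees existence and uniqueness of $\g{\alpha}$; this invertibility was already asserted before Lemma~\ref{lemma E1=0}.

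The main obstacle is essentially bookkeeping rather than conceptual: one must be careful with the indexing convention $\alpha_j$ for $j=2,\ldots,N$ versus the row/column index shifts in $\tilde{\m{A}}=\m{A}_{(2:N,:)}$ and $\tilde{\m{A}}_{(:,2:N)}$, and one must correctly argue that $Q$ has degree exactly $N$ (not less) so that the ``$N$ roots forces $Q=c\,\C{L}_N$'' step is valid — this follows since the leading coefficient of $Q$ is $\beta_N/(\text{stuff})$ times the leading coefficient of the degree-$(N-1)$ interpolant, but more simply it suffices that $Q\not\equiv0$ (from $\dot Q(\tau_1)=-1$) and $\deg Q\le N$ with $N$ distinct roots, forcing $\deg Q=N$ and $Q=c\,\C{L}_N$. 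No hard estimates are needed; the only ``classical input'' is the characterization of LGL nodes as the roots of $(\tau^2-1)\dot P_{N-1}(\tau)$.
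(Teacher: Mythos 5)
Your proof is correct, and it runs in the opposite direction from the paper's. The paper's proof is a verification: it substitutes the candidate $\alpha_j = -\dot{\C{L}}_N(\tau_j)/\dot{\C{L}}_N(\tau_1)$ into the integral form of the equations $\m{A}_{(:,2:N)}\g{\alpha}-\m{A}_{(:,1)}=\m{0}$, recognizes $\sum_j L_j(\tau)\dot{\C{L}}_N(\tau_j)$ as exactly $\dot{\C{L}}_N(\tau)$ (a degree-$(N-1)$ polynomial reproduced by its Lagrange interpolant), and concludes because $\int_{-1}^{\tau_i}\dot{\C{L}}_N = \C{L}_N(\tau_i)-\C{L}_N(-1)=0$ at LGL points; uniqueness of $\g{\alpha}$ is implicit in the invertibility of $\tilde{\m{A}}_{(:,2:N)}$. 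You instead derive the formula: you encode the system as $\tilde{\m{A}}\g{\beta}=\m{0}$ with $\beta_1=-1$, pass to the antiderivative $Q$ of the interpolant $q=\sum_j\beta_j L_j$, observe that $Q$ is a nonzero polynomial of degree at most $N$ vanishing at all $N$ LGL points, and use root counting to force $Q=c\,\C{L}_N$, then read off $c$ and the $\alpha_j$ from $\dot Q(\tau_1)=-1$. Both arguments hinge on the same structural facts (the LGL points are precisely the roots of $\C{L}_N=(\tau^2-1)\dot P_{N-1}$, and $\dot{\C{L}}_N$ has degree $N-1$); what yours buys is an explanation of where the formula comes from and a simultaneous uniqueness statement, at the cost of the extra factorization/degree bookkeeping you flag (that $Q\not\equiv 0$, that its $N$ distinct roots force $\deg Q=N$, and that $\dot{\C{L}}_N(\tau_1)\neq 0$ since $\tau_1$ is a simple root), whereas the paper's substitution argument is shorter but presupposes the candidate. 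Your handling of those side points is accurate, so there is no gap.
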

\begin{proof}
In terms of the Lagrange polynomials $L_j$ defined in Eq.~\eqref{Lagrange basis for f} and the integration coefficients $A_{ij}$ defined in Eq.~\eqref{Axelsson integration matrix elements}, the $i^{\tx{th}}$ equation, $(i=2,\ldots,N)$, in the system
\[
{\m{A}}_{(:,2:N)} \g{\alpha} - {\m{A}}_{(:,1)} = \m{0}
\]
has the form
\begin{equation}\label{lhs=rhs-alpha}
\int_{-1}^{\tau_i} ~
\sum_{j=2}^N L_j(\tau) \alpha_j \dt{\tau} -
\int_{-1}^{\tau_i} L_1 (\tau) \dt{\tau} = 0.
\end{equation}
Substituting the proposed solution into Eq.~\eqref{lhs=rhs-alpha} gives
\begin{equation}\label{lhs=rhs-alpha-2}
-\int_{-1}^{\tau_i} \sum_{j=2}^N L_j(\tau) \frac{\dot{\C{L}}_N(\tau_j)}{\dot{\C{L}}_N(\tau_1)}
\dt{\tau} - \int_{-1}^{\tau_i} L_1 (\tau) \dt{\tau} = -\int_{-1}^{\tau_i} \sum_{j=2}^N L_j(\tau) \frac{\dot{\C{L}}_N(\tau_j)}{\dot{\C{L}}_N(\tau_1)}
\dt{\tau} - \frac{\dot{\C{L}}_N(\tau_1)}{\dot{\C{L}}_N(\tau_1)}\int_{-1}^{\tau_i} L_1 (\tau) \dt{\tau},
\end{equation}
which simplifies to 
\begin{equation}\label{integral-lemma-alpha}
- \frac{1}{\dot{\C{L}}_N(\tau_1)} \int_{-1}^{\tau_i} ~
\sum_{j=1}^N L_j(\tau)  \dot{\C{L}}_N (\tau_j) \dt{\tau}.
\end{equation}
Now, because $\dot{\C{L}}_N(\tau)$ is a polynomial of degree $N-1$, it is seen that
\begin{equation}\label{Lobatto-poynomial-derivative}
    \dot{\C{L}}_N(\tau) = \sum_{j=1}^N L_j(\tau)  \dot{\C{L}}_N (\tau_j) \dt{\tau},
\end{equation}
from which Eq.~\eqref{integral-lemma-alpha} can be written as
\begin{equation}\label{integral-lemma-alpha-2}
- \frac{1}{\dot{\C{L}}_N(\tau_1)} \int_{-1}^{\tau_i} ~
\dot{\C{L}}_N(\tau) \dt{\tau} = \frac{\C{L}_N(\tau_i)-\C{L}_N(-1)}{\dot{\C{L}}_N(\tau_1)}.
\end{equation}
Because both endpoints of the integral are LGL points, $\C{L}_N(\tau_i)=\C{L}_N(-1)=0$, completing the proof.  
\end{proof}

Now, the discrete costate equations given by Eqs.~\eqref{pseudoderiv KKT split x1} and \eqref{pseudoderiv KKT split x2thruN} are mapped to the continuous costate equations given by Eq.~\eqref{1st order opt lambda}. First, an $N\times (N-1)$ matrix $\m{D}\sdag$, which is a modified version of $\m{E}$, is defined as follows:
\begin{equation}\label{Ddag}
\begin{aligned}
    D\sdag_{1j} &= -\frac{w_{j+1}}{w_1}E_{j1} - \frac{w_{j+1}}{w_1^2}\alpha_j, &&(j=1,\ldots,N-1),\\
    D\sdag_{ij} &= -\frac{w_{j+1}}{w_i}E_{ji} + \frac{\delta_{Ni}\delta_{(N-1)j}}{w_N}, &&(i=2,\ldots,N;j=1,\ldots,N-1),
\end{aligned} 
\end{equation}
where now the $N-1$ components of $\g{\alpha}$ are denoted by $\alpha_j,\; (j=1,\ldots,N-1)$.
The transformed adjoint system corresponding to the derivative-like form of the LGL collocation scheme developed in this work is then
\begin{equation}\label{pseudoderiv transformed adj}
    \m{D}\sdag\g{\Lambda}_{2:N} = -\nabla_{X}\left\langle \g{\Lambda}, \m{F}\right\rangle + \frac{\m{e}_1}{w_1}(\g{\mu} - \g{\Lambda}_1)+\frac{\m{e}_N}{w_N}(\g{\Lambda}_N - \nabla \Phi(\m{X}_N)) ,
\end{equation}        
where $\m{D}\sdag\in\R^{N\times (N-1)}$ is a differentiation matrix for the space of polynomials of degree at most $N-2$ and as shown in Theorem~\ref{theorem Ddag} below. Equation~\eqref{pseudoderiv transformed adj} is obtained by combining Eqs.~\eqref{pseudoderiv KKT split x1} and \eqref{pseudoderiv KKT split x2thruN} with Eqs.~\eqref{Lambda pseudoderiv} and \eqref{Ddag}.
\begin{theorem}\label{theorem Ddag}
If $\m{v} \in \mathbb{R}^N$ is a vector with $v_1 =$ $w_1^{-1} \g{\alpha}\T\m{W}_{2:N}\m{v}_{2:N}$ and $v(\tau)$ is the polynomial of degree at most $N-2$ that satisfies $v(\tau_j) =v_j$, $(j = 1,\ldots,N)$, then
\begin{equation}\label{Ddag matrix}
  (\m{D}\sdag \m{v}_{2:N})_j = \dot{v}(\tau_j), \quad (j=1,\ldots,N).
\end{equation}
\end{theorem}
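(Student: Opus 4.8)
The plan is to prove the identity $(\m{D}\sdag\m{v}_{2:N})_j=\dot v(\tau_j)$ in weak form and then invoke nondegeneracy of the LGL quadrature. Specifically, I would show that for every test polynomial $q$ of degree at most $N-1$,
\[
\sum_{i=1}^N w_i\,q(\tau_i)\,(\m{D}\sdag\m{v}_{2:N})_i \;=\; \sum_{i=1}^N w_i\,q(\tau_i)\,\dot v(\tau_i).
\]
Because the $N$ LGL nodes are distinct, the sampling map $q\mapsto(q(\tau_1),\dots,q(\tau_N))$ is a bijection from the degree-$(N-1)$ polynomials onto $\R^N$; choosing $q=L_k$ so that $q(\tau_i)=\delta_{ik}$ and using $w_k>0$ then reduces the displayed identity to the pointwise claim at $\tau_k$ for each $k=1,\dots,N$. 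Throughout, the right-hand side equals $\int_{-1}^{1}q\,\dot v\,\dt{\tau}$ exactly because $\deg(q\dot v)\le(N-1)+(N-3)=2N-4\le 2N-3$; keeping every quadrature inside this $N$-point exactness window is precisely why $v$ is restricted to degree at most $N-2$.

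Two preliminary facts drive the computation. First, the action of $\m{E}$ on polynomial data: for $q$ with $\deg q\le N-1$, writing $\m{q},\dot{\m{q}}$ for the nodal values of $q,\dot q$, the identity $\m{q}_{2:N}=\m{1}q(\tau_1)+\tilde{\m{A}}\dot{\m{q}}$ holds because $\tilde{\m{A}}$ integrates the degree-$(N-2)$ polynomial $\dot q$ exactly from $-1$ to each $\tau_i$; this is Eq.~\eqref{LGL integral form} applied to $(\m{q},\dot{\m{q}})$, and left-multiplying by $[\tilde{\m{A}}_{(:,2:N)}]\inv$ exactly as in the derivation of Eq.~\eqref{pseudoderiv defect} gives $\m{E}\m{q}=\g{\alpha}\,\dot q(\tau_1)+\dot{\m{q}}_{2:N}$, i.e.\ $(\m{E}\m{q})_j=\alpha_j\dot q(\tau_1)+\dot q(\tau_{j+1})$ for $j=1,\dots,N-1$. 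Second, the two branches of Eq.~\eqref{Ddag} merge into the single formula $w_i D\sdag_{ij}=-w_{j+1}E_{ji}+\delta_{iN}\delta_{(N-1)j}-\delta_{i1}(w_{j+1}/w_1)\alpha_j$ for all $i=1,\dots,N$, $j=1,\dots,N-1$, the collapse relying on $w_i\delta_{iN}=w_N\delta_{iN}$ and $w_i\delta_{i1}=w_1\delta_{i1}$.

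The main step substitutes this merged formula into $\sum_i w_i q(\tau_i)(\m{D}\sdag\m{v}_{2:N})_i$ and sums over $i$, producing three groups: $-\sum_j w_{j+1}v_{j+1}(\m{E}\m{q})_j$, the term $q(\tau_N)v_N$ coming from $\delta_{iN}\delta_{(N-1)j}$, and $-w_1^{-1}q(\tau_1)\,\g{\alpha}\T\m{W}_{2:N}\m{v}_{2:N}$ coming from $\delta_{i1}$. The hypothesis $v_1=w_1^{-1}\g{\alpha}\T\m{W}_{2:N}\m{v}_{2:N}$ is used twice: once to rewrite the third group as $-q(\tau_1)v_1$, and once (after inserting the $\m{E}$-action formula into the first group) to cancel the two $\dot q(\tau_1)$ contributions, one carrying $\g{\alpha}\T\m{W}_{2:N}\m{v}_{2:N}$ and one carrying $w_1 v_1$. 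What remains is $-\sum_{i=1}^N w_i v_i\dot q(\tau_i)-q(\tau_1)v_1+q(\tau_N)v_N$; since $v(\tau_i)=v_i$, $\tau_1=-1$, $\tau_N=+1$, and $\sum_i w_i v_i\dot q(\tau_i)=\int_{-1}^{1}v\dot q\,\dt{\tau}$ (exact, $\deg(v\dot q)\le 2N-4$), this equals $[vq]_{-1}^{1}-\int_{-1}^{1}v\dot q\,\dt{\tau}=\int_{-1}^{1}\dot v\,q\,\dt{\tau}=\sum_i w_i\dot v(\tau_i)q(\tau_i)$ by integration by parts, which is the weak identity. As a side remark, the $v_1$-hypothesis is in fact automatic for any $v$ of degree $\le N-2$: applying the same quadrature-plus-parts argument to $\dot{\C{L}}_N\cdot v$, and using that the LGL nodes are the roots of $\C{L}_N$ together with $\C{L}_N(\pm1)=0$, shows $\g{\alpha}\T\m{W}_{2:N}\m{v}_{2:N}=w_1 v_1$ via Lemma~\ref{lemma-alpha}, so the theorem is simply the statement that $\m{D}\sdag$ differentiates all polynomials of degree at most $N-2$.

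I expect the one genuine difficulty to be clerical bookkeeping rather than a conceptual gap: reconciling the offset between $\m{v}$ and $\m{v}_{2:N}$, the two indexing conventions for the components of $\g{\alpha}$ that the text itself switches between around Eq.~\eqref{Ddag}, the alignment of $\m{W}_{2:N}$ with the trailing block of $\m{W}$, and verifying that each of the three quadratures invoked has integrand of degree at most $2N-3$. Conceptually the result is forced: Eq.~\eqref{Ddag} is engineered so that $\m{D}\sdag$ is the LGL-quadrature adjoint of the derivative-like operator $\m{E}$, and the proof above is nothing but the discrete integration-by-parts identity that this adjointness encodes.
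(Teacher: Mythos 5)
Your proof is correct, and it runs on the same engine as the paper's---exactness of LGL quadrature for polynomials of degree at most $2N-3$, integration by parts, and the identity $\m{E}\m{q} = \g{\alpha}\,\dot q(\tau_1) + \dot{\m{q}}_{2:N}$ for the nodal data of a polynomial $q$ of degree at most $N-1$---but the packaging is genuinely different. The paper tests the discrete integration-by-parts identity against polynomials $y$ of degree at most $N$ constrained by $\dot y(\tau_1)=0$ (so that $\m{E}\m{y}=\dot{\m{y}}_{2:N}$ with no $\g{\alpha}$ term), deduces from the arbitrariness of $\m{y}$ and $\m{v}$ an explicit formula for the matrix $\m{G}$ satisfying $\dot{\m{v}}=\m{G}\m{v}_{2:N}$, and then checks entrywise that $\m{G}$ coincides with $\m{D}\sdag$ as defined in Eq.~\eqref{Ddag}. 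You instead substitute the definition of $\m{D}\sdag$ directly (via your merged formula $w_iD\sdag_{ij}=-w_{j+1}E_{ji}+\delta_{iN}\delta_{(N-1)j}-\delta_{i1}(w_{j+1}/w_1)\alpha_j$, which is right), verify the weak identity against the unconstrained degree-$(N-1)$ test space, and localize with $q=L_k$ using $w_k>0$; the two uses of the hypothesis $w_1v_1=\g{\alpha}\T\m{W}_{2:N}\m{v}_{2:N}$ (once to produce $-q(\tau_1)v_1$, once to supply the missing $i=1$ term of the quadrature sum $\sum_i w_iv_i\dot q_i$) and the degree counts $2N-4\le 2N-3$ all check out. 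A further difference: the paper leans on Theorem 4 of Ref.~\cite{Garrido2023} to tie the $v_1$-constraint to the degree bound on the interpolant, whereas your side remark (quadrature plus parts applied to $\dot{\C{L}}_N\,v$, using that the LGL points are roots of $\C{L}_N$ and $\C{L}_N(\pm1)=0$) proves the equivalence directly through Lemma~\ref{lemma-alpha}, making the argument self-contained. What the paper's route buys is the explicit construction, and uniqueness, of the adjoint matrix $\m{G}$ (the ``arbitrary $\m{y}$, $\m{v}$'' step, which itself quietly needs that the only degree-$N$ polynomial vanishing at all LGL points with $\dot y(\tau_1)=0$ is zero); what yours buys is a verification-only argument with a simpler test space, no constraint on the test polynomial's derivative, and no external citation.
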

\begin{proof}   
After substituting for $\alpha_j$ using Lemma~\ref{lemma-alpha}, the expression for $v_1$ in the statement of the theorem can be rearranged into the identity
\begin{equation}
\sum_{j=1}^N w_j \dot{\C{L}}_N (\tau_j) v_j = 0.
\end{equation}
In Theorem 4 of Ref.~\cite{Garrido2023}, it is shown that when a vector $\m{v}$ satisfies this identity, the associated polynomial $v(\tau)$ that interpolates the components $v_j$ at the Lobatto points has degree at most $N-2$. Next, let $y$ be a polynomial of degree at most $N$ with derivative $\dot{y}$ which vanishes at $\tau_1$ and which satisfies $y(\tau_j) =$ $y_j,\: (j = 1, \ldots, N)$. Then, via integration by parts,
\begin{equation}\label{IbP}
    \int_{-1}^{+1} \dot{y}(\tau)v(\tau)\dt{\tau} = \Big[ y(\tau)v(\tau) \Big]_{-1}^{+1} - \int_{-1}^{+1} y(\tau)\dot{v}(\tau)\dt{\tau}.
\end{equation}
For $y$ defined as a polynomial of degree at most $N$ and $v$ defined as a polynomial of degree at most $N-2$ for $N\geq 3$, the polynomials given by $\dot{y}v$ and $y\dot{v}$ are of degree at most $2N-3$. Moreover, because Legendre-Gauss-Lobatto quadrature is exact for polynomials of degree at most $2N-3$, the integrals in Eq.~\eqref{IbP} can be replaced by their quadrature equivalents to obtain
    \begin{equation}\label{IbP quad}
        \sum_{j=1}^N w_j \dot{y}_j v_j = y_N v_N - y_1 v_1 - \sum_{j=1}^N w_j y_j \dot{v}_j.
    \end{equation}
    More compactly, Eq.~\eqref{IbP quad} can be expressed as
    \begin{equation}\label{IbP quad compact}
        (\m{W}\dot{\m{y}})\T \m{v} = y_N v_N - y_1 v_1 - (\m{Wy})\T \dot{\m{v}},
    \end{equation}
    where $\dot{\m{y}}\in\R^{N}$ is the vector with $j^{\tx{th}}$ component $\dot{y}_j=\dot{y}(\tau_j)$ and $\dot{\m{v}}\in\R^N$ is the vector with $j^{\tx{th}}$ component $\dot{v}_j=\dot{v}(\tau_j),~(j=1,\ldots,N)$ such that $\dot{\m{v}}=\m{Gv}_{2:N}$, where $\m{G}\in\R^{N\times(N-1)}$ is the differentiation matrix that takes into account that $v_1$ is a linear combination of $(v_2,\ldots,v_N)$.  
    By Eq.~\eqref{pseudoderiv defect}, with $\m{X}$ replaced by $\m{y}$ and $\m{F}$ replaced by $\dot{\m{y}}$, where $\dot{\m{y}}_1=\dot{\m{y}}(\tau_1) = 0$, the expression $\dot{\m{y}}_{2:N} = \m{Ey}$ is obtained.  Then, substituting $v_1$, $\dot{\m{v}}$, and $\dot{\m{y}}$ into Eq.~\eqref{IbP quad compact} yields
    \begin{equation}
        \m{y}\T \left( \m{E}\T \m{W}_{2:N} + \m{WG} + w_1\inv \m{e}_1 \g{\alpha}\T \m{W}_{2:N} - \m{e}_N\tilde{\m{e}}_{N-1}\T \right) \m{v}_{2:N} = \m{0},
    \end{equation}
    where $\m{e}_1$ and $\m{e}_N$ are the first and last columns, respectively, of $\m{I}_{N}$, and $\tilde{\m{e}}_{N-1}$ is the last column of $\m{I}_{N-1}$. Because $\m{y}$ and $\m{v}$ are arbitrary, it can be deduced that
    \begin{equation}
        \m{E}\T \m{W}_{2:N} + \m{WG} + w_1\inv \m{e}_1 \g{\alpha}\T \m{W}_{2:N} - \m{e}_N\tilde{\m{e}}_{N-1}\T = \m{0},
    \end{equation}
    which implies that 
    \begin{equation}\label{G}
    \begin{aligned}
        G_{1j} &= -\frac{w_{j+1}}{w_1}E_{j1} - \frac{w_{j+1}}{w_1^2}\alpha_j, &&(j=1,\ldots,N-1),\\
        G_{ij} &= -\frac{w_{j+1}}{w_i}E_{ji} + \frac{\delta_{Ni}\delta_{(N-1)j}}{w_N}, &&(i=2,\ldots,N;j=1,\ldots,N-1).
    \end{aligned} 
    \end{equation}
    Comparing Eq.~\eqref{G} with Eq.~\eqref{Ddag}, it can be concluded that $\m{D}\sdag=\m{G}$. Thus, $\m{D}\sdag\in\R^{N\times (N-1)}$ is a differentiation matrix for the space of polynomials of degree at most $N-2$.
\end{proof}

Equation~\eqref{pseudoderiv transformed adj} approximates the derivative of the costate $\dot{\g{\lambda}}(\tau_i)\approx \m{D}\sdag_{(i,:)}\g{\Lambda},~(i=1,\ldots,N)$, at the $N$ LGL points, assuming the discrete transversality condition $\g{\Lambda}(+1)=\nabla \Phi(\m{X}_N)$ is satisfied. If $\g{\Lambda}(+1)=\nabla \Phi(\m{X}_N)$ is satisfied, then the continuous transversality condition of Eq.~\eqref{1st order opt trans} is satisfied. Furthermore, satisfying the condition $\g{\mu}=\g{\Lambda}_1$ implies that the continuous costate $\g{\lambda}(-1)$ is free, exactly as in the continuous optimality conditions given in Eqs.~\eqref{1st order opt x}-\eqref{1st order opt u}. 

\begin{theorem}
    The condition $\g{\mu}=\g{\Lambda}_1$ is satisfied if and only if the condition $\g{\Lambda}(+1)=\nabla \Phi(\m{X}_N)$ is satisfied. Furthermore, $-\nabla_X\left\langle \g{\Lambda,\m{F}} \right\rangle$ is a discrete approximation of the costate dynamics $\dot{\g{\Lambda}}$ if $\g{\mu}=\g{\Lambda}_1$ and $\g{\Lambda}(+1)=\nabla \Phi(\m{X}_N)$ are satisfied.
\end{theorem}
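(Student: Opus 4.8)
The plan is to work throughout from the transformed adjoint identity~\eqref{pseudoderiv transformed adj}, which (as remarked right after it) is nothing but the KKT system~\eqref{pseudoderiv KKT split x1}--\eqref{pseudoderiv KKT split x2thruN} rewritten by means of~\eqref{Lambda pseudoderiv},~\eqref{Ddag}, and Theorem~\ref{theorem Ddag}. First I would note that the costate in~\eqref{Lambda pseudoderiv} meets the hypothesis of Theorem~\ref{theorem Ddag} for free: eliminating $\m{S}=\m{W}_{2:N}\g{\Lambda}_{2:N}$ gives $\g{\Lambda}_1=w_1^{-1}\g{\alpha}\T\m{W}_{2:N}\g{\Lambda}_{2:N}$, so the interpolant $\g{\Lambda}(\tau)$ of the rows $\g{\Lambda}_j$ has degree at most $N-2$ and $(\m{D}\sdag\g{\Lambda}_{2:N})_j=\dot{\g{\Lambda}}(\tau_j)$. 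Reading~\eqref{pseudoderiv transformed adj} row by row then shows $\dot{\g{\Lambda}}(\tau_j)=-(\nabla_X\langle\g{\Lambda},\m{F}\rangle)_j$ at the interior nodes $j=2,\dots,N-1$, and the same relation perturbed by $w_1^{-1}(\g{\mu}-\g{\Lambda}_1)$ at $j=1$ and by $w_N^{-1}(\g{\Lambda}_N-\nabla\Phi(\m{X}_N))$ at $j=N$.

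For the biconditional I would draw scalar consequences from~\eqref{pseudoderiv transformed adj} by left-multiplying with covectors $[c_1,\dots,c_N]$ and using that $\dot{\g{\Lambda}}$ has degree at most $N-3$ while LGL quadrature is exact through degree $2N-3$. Taking $c_j=w_j$ telescopes the left side, $\sum_j w_j\dot{\g{\Lambda}}(\tau_j)=\g{\Lambda}_N-\g{\Lambda}_1$; taking $c_j=w_jP_{N-2}(\tau_j)$ or $c_j=w_jP_{N-1}(\tau_j)$ annihilates it by orthogonality of $P_{N-2},P_{N-1}$ against polynomials of degree at most $N-3$. The key auxiliary identity here, obtainable from $\C{L}_N=(\tau^2-1)\dot P_{N-1}$ in~\eqref{Lobatto polynomial} together with the Legendre relations $(1-\tau^2)\dot P_n=\tfrac{n(n+1)}{2n+1}(P_{n-1}-P_{n+1})$ and $\dot P_{n+1}-\dot P_{n-1}=(2n+1)P_n$, is $\dot{\C{L}}_N(\tau)=N(N-1)P_{N-1}(\tau)$; this makes the $j=1,N$ weights in the combinations above explicit and also gives, since $\deg\g{\Lambda}(\tau)\le N-2$, the relation $\sum_j w_j\dot{\C{L}}_N(\tau_j)\g{\Lambda}_j=0$ already used in the proof of Lemma~\ref{lemma-alpha}. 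Combining these identities, the contributions of $\nabla_X\langle\g{\Lambda},\m{F}\rangle$ and of the interior defects cancel, leaving a relation tying $\g{\mu}-\g{\Lambda}_1$ to $\g{\Lambda}_N-\nabla\Phi(\m{X}_N)$, from which the equivalence of the two conditions follows.

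The ``furthermore'' is then immediate: with both endpoint residuals zero,~\eqref{pseudoderiv transformed adj} collapses to $\m{D}\sdag\g{\Lambda}_{2:N}=-\nabla_X\langle\g{\Lambda},\m{F}\rangle$, i.e.\ $\dot{\g{\Lambda}}(\tau_j)=-(\nabla_X\langle\g{\Lambda},\m{F}\rangle)_j$ at every LGL node, so $-\nabla_X\langle\g{\Lambda},\m{F}\rangle$ reproduces exactly the node values of the polynomial derivative $\dot{\g{\Lambda}}$ and hence discretizes $\dot{\g{\lambda}}=-\nabla_x\langle\g{\lambda},\m{f}\rangle$ in~\eqref{1st order opt lambda}. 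The step I expect to be the real obstacle is the biconditional: $\nabla_X\langle\g{\Lambda},\m{F}\rangle$ interpolates a polynomial of degree $N-1$, two higher than $\dot{\g{\Lambda}}$, so its contribution must be removed by precisely the right linear combination of the $\m{1}$-, $P_{N-2}$-, and $P_{N-1}$-tests; equivalently, one observes that the degree-$(N-1)$ polynomial matching the node values $\dot{\g{\Lambda}}(\tau_j)+(\nabla_X\langle\g{\Lambda},\m{F}\rangle)_j$ vanishes at the $N-2$ interior nodes, hence equals $\dot P_{N-1}$ times a linear factor, and the whole argument hinges on pinning down that linear factor at $\tau=\pm1$.
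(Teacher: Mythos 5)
There is a genuine gap at exactly the point you flagged: the biconditional. Your plan is to extract it from the transformed adjoint system \eqref{pseudoderiv transformed adj} alone by testing with the covectors $w_j$, $w_jP_{N-2}(\tau_j)$, $w_jP_{N-1}(\tau_j)$ and asserting that ``the contributions of $\nabla_X\langle\g{\Lambda},\m{F}\rangle$ and of the interior defects cancel.'' They do not. There are no interior defects to cancel against (the residual terms sit only at $j=1$ and $j=N$), and the quantity $\nabla_X\langle\g{\Lambda},\m{F}\rangle$ is just a problem-dependent array of node values with no polynomial structure you can exploit. Concretely: the $w$-test gives $\g{\Lambda}_N-\g{\Lambda}_1=-\sum_j w_j(\nabla_X\langle\g{\Lambda},\m{F}\rangle)_j+(\g{\mu}-\g{\Lambda}_1)+(\g{\Lambda}_N-\nabla\Phi(\m{X}_N))$, and since \eqref{pseudoderiv transformed adj} is nothing but a rewriting of the KKT equations \eqref{pseudoderiv KKT split x1}--\eqref{pseudoderiv KKT split x2thruN}, the sum $\sum_j w_j(\nabla_X\langle\g{\Lambda},\m{F}\rangle)_j$ evaluates (via \eqref{Lambda pseudoderiv} and Lemma~\ref{lemma E1=0}) to $\g{\mu}-\nabla\Phi(\m{X}_N)$, so this test collapses to the identity $0=0$ and carries no information. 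The $P_{N-2}$- and $P_{N-1}$-tests do annihilate the left side, but they leave $\sum_j w_jP(\tau_j)(\nabla_X\langle\g{\Lambda},\m{F}\rangle)_j$ on the right, a quantity that depends on $\m{f}$, $\Phi$, and the NLP solution and is not expressible through the two endpoint residuals; your closing remark about ``pinning down the linear factor at $\tau=\pm1$'' is precisely the step that is missing and that these tests cannot supply. So the equivalence $\g{\mu}=\g{\Lambda}_1\Leftrightarrow\g{\Lambda}_N=\nabla\Phi(\m{X}_N)$ is not established by your argument.

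The paper's proof supplies the missing ingredient by a different (and much shorter) route: it \emph{supposes} that $-\nabla_X\langle\g{\Lambda},\m{F}\rangle$ plays the role of the discrete costate dynamics and writes the LGL quadrature relation $\g{\Lambda}_N=\g{\Lambda}_1+\sum_{i=1}^N w_i\dot{\g{\Lambda}}_i$ with $\dot{\g{\Lambda}}=-\nabla_X\langle\g{\Lambda},\m{F}\rangle$ (Eq.~\eqref{Lambda quad approx 1}); substituting \eqref{Lambda pseudoderiv}, then the KKT conditions \eqref{pseudoderiv KKT split x1}--\eqref{pseudoderiv KKT split x2thruN}, and finally $\m{1}\T\m{E}\T=\m{0}$ from Lemma~\ref{lemma E1=0}, it arrives at $\g{\Lambda}_N=\g{\Lambda}_1-\g{\mu}+\nabla\Phi(\m{X}_N)$, from which the biconditional is immediate. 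Note that this quadrature relation is an additional hypothesis doing real work, not a consequence of the transformed adjoint system; that is why an unconditional orthogonality argument of the kind you propose cannot reproduce it. Your ``furthermore'' paragraph (both residuals zero $\Rightarrow$ $\m{D}\sdag\g{\Lambda}_{2:N}=-\nabla_X\langle\g{\Lambda},\m{F}\rangle$ at every node) is fine and matches the paper's intent, and your identity $\dot{\C{L}}_N=N(N-1)P_{N-1}$ is correct but does not repair the main step.
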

\begin{proof}
    Suppose $-\nabla_X\left\langle \g{\Lambda,\m{F}} \right\rangle$ of Eq.~\eqref{pseudoderiv transformed adj} approximates the costate dynamics. Furthermore, the costate at the terminal endpoint, $\g{\Lambda}_N\approx\g{\lambda}(+1)$, can be computed via quadrature approximation by
    \begin{equation}\label{Lambda quad approx 1}
        \g{\Lambda}_N = \g{\Lambda}_1 + \sum_{i=1}^{N}w_i\dot{\g{\Lambda}}_i,
    \end{equation}
    where $\g{\Lambda}_1\approx\g{\lambda}(-1)$ approximates the costate at the initial endpoint,  $\dot{\g{\Lambda}}\in\R^{N\times n_x}$ are the discrete costate dynamics, and Eq.~\eqref{Lambda quad approx 1} is the LGL quadrature approximation of the integral of the costate dynamics across the entire interval. Substituting $\dot{\g{\Lambda}}=-\nabla_X\left\langle \g{\Lambda,\m{F}} \right\rangle$ and using the costate expressions from Eq.~\eqref{Lambda pseudoderiv}, Eq.~\eqref{Lambda quad approx 1} can be rewritten as
    \begin{equation}\label{Lambda quad approx 2}
        \g{\Lambda}_N = \g{\Lambda}_1 - \nabla_{X_1}\left\langle \m{S},\g{\alpha\m{F}_1} \right\rangle - \m{1}\T \nabla_{X_{2:N}} \left\langle \m{S},\m{F}_{2:N} \right\rangle.
    \end{equation}
    Next, substituting the KKT conditions from Eqs.~\eqref{pseudoderiv KKT split x1}-\eqref{pseudoderiv KKT split x2thruN} into Eq.~\eqref{Lambda quad approx 2} yields
    \begin{equation}\label{Lambda quad approx 3}
        \g{\Lambda}_N = \g{\Lambda}_1 - \g{\mu} - \m{E}_{(:,1)}\T\m{S} - \m{1}\T \m{E}_{(:,2:N)}\T\m{S} + \nabla \Phi(\m{X}_N).
    \end{equation}
    By Lemma~\ref{lemma E1=0}, $(\m{1}\T\m{E}\T)\m{S}=\m{0}$ and Eq.~\eqref{Lambda quad approx 3} can be written simply as
    \begin{equation}\label{Lambda quad approx 4}
        \g{\Lambda}_N = \g{\Lambda}_1 - \g{\mu}  + \nabla \Phi(\m{X}_N).
    \end{equation}
    Thus, if $\g{\mu}=\g{\Lambda}_1$, then Eq.~\eqref{Lambda quad approx 4} becomes $\g{\Lambda}_N =  \nabla \Phi(\m{X}_N)$. Alternatively, if $\g{\Lambda}_N =  \nabla \Phi(\m{X}_N)$, then Eq.~\eqref{Lambda quad approx 4} becomes $\g{\mu}=\g{\Lambda}_1$. This completes the proof.
\end{proof}

In practice, if the continuous costate solution is smooth but is not a polynomial, $\g{\Lambda}(+1) \approx \nabla \Phi(\m{X}_N)$ and $\g{\mu}\approx\g{\Lambda}_1$ as $N$ is increased. As the expressions $(\g{\mu}-\g{\Lambda}_1)$ and $(\g{\Lambda}_N - \nabla \Phi(\m{X}_N))$ both approach zero, then $\m{D}\sdag\g{\Lambda}_{2:N}=-\nabla_X \left\langle \g{\Lambda},\m{F} \right\rangle = \dot{\g{\Lambda}}$ becomes an increasingly better approximation of the continuous costate dynamics $\dot{\g{\lambda}}$ in Eq.~\eqref{1st order opt lambda}. Lastly, the transformed KKT condition
\begin{equation}
    \m{0} = \nabla_U \left\langle \g{\Lambda}, \m{F}\right\rangle,
\end{equation}
maps to the continuous optimal control law given by Eq.~\eqref{1st order opt u}.

\section{Connections Between Integral Form and Derivative-Like Form}\label{sect:LGL Connections}

The integral form and derivative-like form of the LGL collocation method described in this paper are related because they are based on the same polynomial approximation of the differential equation vector field, where the derivative-like form is obtained by multiplying the integral form by the inverse of a nonsingular $(N-1)\times(N-1)$ matrix.  Given this relationship between the two forms, this section describes some of the key connections between the integral form and derivative-like form of the LGL  collocation method of this paper.

\subsection{Relationship Between the Transformed Adjoint Systems}\label{subsect: transformed adjoint relation}
The discrete costate dynamics given in Eq.~\eqref{integral transformed adj}, which are derived from the integral form of Section~\ref{sect:LGL int}, are exactly the same as those given in Eq.~\eqref{pseudoderiv transformed adj}, which are derived from the derivative-like form of Section~\ref{sect:LGL deriv}. Because $\m{D}\sdag$ is a differentiation matrix for the space of polynomials of degree $N-2$ or less, it can be shown that $\m{A}\sdag$ in Eq.~\eqref{integral transformed adj} is an integration matrix for the space of polynomials of degree $N-3$ or less. 

First, the transformed adjoint system of the integral form in Eq.~\eqref{integral transformed adj} can be written equivalently by substituting $\g{\Lambda}_1$ for an expression in terms of $\g{\Lambda}_{2:N}$ using Eq.~\eqref{Lambda pseudoderiv} and replacing the discrete approximation of the costate dynamics with $\m{D}\sdag\g{\Lambda}_{2:N}$:
\begin{equation}\label{integral transformed adj rewritten}
    \begin{bmatrix}
        \g{\Lambda}_1 \\ \g{\Lambda}_{2:N}
    \end{bmatrix} = 
    \begin{bmatrix}
        \g{\Lambda}_1 \\ \ds \frac{1}{w_1}\m{1}\g{\alpha}\T\m{W}_{2:N}\g{\Lambda}_{2:N}
    \end{bmatrix} + 
    \begin{bmatrix}
        \m{A}\sdag_{(1,:)}\m{D}\sdag\g{\Lambda}_{2:N} \\
        \m{A}\sdag_{(2:N,:)}\m{D}\sdag\g{\Lambda}_{2:N}
    \end{bmatrix}.
\end{equation}

\begin{proposition}\label{proposition AD=0}
    $\m{A}\sdag_{(1,:)}\m{D}\sdag = \m{0}$
\end{proposition}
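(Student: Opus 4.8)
The plan is to prove the pointwise identity $\m{A}\sdag_{(1,:)}\m{D}\sdag\m{v}_{2:N}=\m{0}$ for every $\m{v}_{2:N}\in\R^{N-1}$; since $\m{v}_{2:N}$ is arbitrary, this forces the $1\times(N-1)$ row $\m{A}\sdag_{(1,:)}\m{D}\sdag$ to vanish. Given $\m{v}_{2:N}$, set $v_1:=w_1\inv\g{\alpha}\T\m{W}_{2:N}\m{v}_{2:N}$ and let $v(\tau)$ be the polynomial of degree at most $N-2$ interpolating $v(\tau_j)=v_j$, $(j=1,\ldots,N)$. Theorem~\ref{theorem Ddag} then gives $(\m{D}\sdag\m{v}_{2:N})_j=\dot v(\tau_j)$. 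Reading the first row of $\m{A}\sdag$ from Eq.~\eqref{Adag} (where $1-\delta_{N1}=1$ since $N\ge 2$), namely $A\sdag_{1j}=w_j-\frac{w_j}{w_1}A_{j1}$, yields
\begin{equation}\nonumber
\m{A}\sdag_{(1,:)}\m{D}\sdag\m{v}_{2:N}=\sum_{j=1}^N w_j\dot v(\tau_j)-\frac{1}{w_1}\sum_{j=1}^N w_j A_{j1}\dot v(\tau_j).
\end{equation}

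I would then evaluate the two sums using the exactness of LGL quadrature, mirroring the argument in the proof of Theorem~\ref{theorem Ddag}. Since $v$ has degree at most $N-2$, $\dot v$ has degree at most $N-3$, so the first sum equals $\int_{-1}^{+1}\dot v(\tau)\dt{\tau}=v_N-v_1$. For the second sum, introduce the auxiliary polynomial $g(\tau):=\int_{-1}^{\tau}L_1(s)\dt{s}$, which has degree $N$, satisfies $g(\tau_j)=A_{j1}$ by Eq.~\eqref{Axelsson integration matrix elements}, and has $\dot g=L_1$, $g(-1)=0$, and $g(+1)=\int_{-1}^{+1}L_1(\tau)\dt{\tau}=w_1$. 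Then $\sum_j w_j A_{j1}\dot v(\tau_j)=\sum_j w_j g(\tau_j)\dot v(\tau_j)$ is the LGL quadrature of $g\dot v$, a polynomial of degree at most $2N-3$ and hence computed exactly; integration by parts gives
\begin{equation}\nonumber
\int_{-1}^{+1}g(\tau)\dot v(\tau)\dt{\tau}=\big[g(\tau)v(\tau)\big]_{-1}^{+1}-\int_{-1}^{+1}L_1(\tau)v(\tau)\dt{\tau}=w_1 v_N-w_1 v_1,
\end{equation}
where $g(-1)v(-1)=0$, $g(+1)v(+1)=w_1 v_N$, and $\int_{-1}^{+1}L_1 v\,\dt{\tau}=w_1 v_1$ (again exact, the integrand having degree at most $2N-3$). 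Substituting both evaluations back gives $\m{A}\sdag_{(1,:)}\m{D}\sdag\m{v}_{2:N}=(v_N-v_1)-\frac{1}{w_1}w_1(v_N-v_1)=\m{0}$, which completes the argument.

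I do not expect a serious obstacle here. The main idea is that Theorem~\ref{theorem Ddag} recasts the matrix identity as an identity among polynomials, after which exactness of LGL quadrature through degree $2N-3$ together with a single integration by parts does the work. The one step needing a moment of care is recognizing the antiderivative $g(\tau)=\int_{-1}^{\tau}L_1$ as the polynomial whose LGL samples are exactly the entries $A_{j1}$ forming the first column of Axelsson's integration matrix, with the convenient boundary values $g(-1)=0$ and $g(+1)=w_1$; the degree bookkeeping for $\dot v$, $g\dot v$, and $L_1 v$ is then routine.
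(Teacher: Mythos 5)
Your proof is correct, and it takes a genuinely different route from the paper. The paper proves the identity by brute-force element-wise computation: it writes out $\sum_j A\sdag_{1j}D\sdag_{ji}$ using the explicit formulas in Eqs.~\eqref{Adag} and \eqref{Ddag}, then invokes Lemma~\ref{lemma E1=0} ($\m{E1}=\m{0}$) and Lemma~\ref{lemma EA=I} ($\m{E}_{(:,2:N)}\m{A}_{(2:N,2:N)}=\m{I}_{N-1}$) to collapse the sums, handling the columns $i=1,\ldots,N-2$ and the column $i=N-1$ separately because of the Kronecker-delta term in the last column of $\m{D}\sdag$. You instead test the row against an arbitrary $\m{v}_{2:N}$, use Theorem~\ref{theorem Ddag} to reinterpret $\m{D}\sdag\m{v}_{2:N}$ as the derivative samples of a degree-$\le N-2$ polynomial, and finish with LGL exactness through degree $2N-3$, the antiderivative $g$ of $L_1$ whose LGL samples are the first column of $\m{A}$, and one integration by parts; this is legitimate (no circularity, since Theorem~\ref{theorem Ddag} is established independently of the proposition) and your degree bookkeeping ($\dot v$ of degree $\le N-3$, $g\dot v$ and $L_1 v$ of degree $\le 2N-3$, $g(+1)=w_1$) checks out, with the $N=2$ case trivial. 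What each approach buys: the paper's argument is purely linear-algebraic and needs only the two elementary lemmas about $\m{E}$, at the cost of an opaque two-case computation; yours leverages the functional characterization of $\m{D}\sdag$ to give a one-line conceptual reason the row annihilates everything (fundamental theorem of calculus plus quadrature exactness), absorbing the delta-term case split into Theorem~\ref{theorem Ddag}, but it rests on the heavier machinery of that theorem rather than on the raw matrix definitions.
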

\begin{proof}
    In order for $\m{A}\sdag_{(1,:)}\m{D}\sdag=\m{0}$ to be true, it must be shown that
    \begin{equation}\label{prop AD=0 general}
        \sum_{j=1}^N A\sdag_{1j}D\sdag_{ji} = 0, ~\forall i\in\{1,\ldots,N-1\}.
    \end{equation}
    
    For $i=1,\ldots,N-2$, the left-hand side of Eq.~\eqref{prop AD=0 general} is
    \begin{equation}\label{prop AD=0 (a)}
    \begin{aligned}
        \sum_{j=1}^N A\sdag_{1j}D\sdag_{ji} &= w_1\left(1-\frac{A_{11}}{w_1}\right)\cdot\frac{1}{w_1}\left( -w_{i+1} E_{i1} - \frac{w_{i+1}}{w_1}\alpha_i \right) + \sum_{j=2}^N \left[ w_j \left( 1-\frac{A_{j1}}{w_1} \right) \cdot \frac{1}{w_j}\left( -w_{i+1}E_{ij} \right) \right],
    \end{aligned}
    \end{equation}
    where the elements of $\m{A}\sdag$ and $\m{D}\sdag$ are defined in Eq.~\eqref{Adag} and \eqref{Ddag}, respectively. Note, $A_{11}=0$. Then, Eq.~\eqref{prop AD=0 (a)} can be written more compactly as
    \begin{equation}\label{prop AD=0 (b)}
        \sum_{j=1}^N A\sdag_{1j}D\sdag_{ji} = -w_{i+1}E_{i1} - \frac{w_{i+1}}{w_1}\alpha_i + \left(-w_{i+1}\m{E}_{(i,2:N)}\right)\left( \m{1} - \frac{1}{w_1}\m{A}_{(2:N,1)} \right).
    \end{equation}
    By Lemma~\ref{lemma E1=0}, $-\m{E}_{(i,2:N)}\m{1}=E_{i1}$. Therefore, right side of Eq.~\eqref{prop AD=0 (b)} simplifies to 
    \begin{equation}
    \begin{aligned}
        \sum_{j=1}^N A\sdag_{1j}D\sdag_{ji} &= -\frac{w_{i+1}}{w_1}\left( {\alpha}_i - \m{E}_{(i,2:N)}\m{A}_{(2:N,1)} \right) \\
        &= -\frac{w_{i+1}}{w_1}\left( \tilde{\m{e}}_i\T \left[ \m{A}_{(2:N,2:N)}\right]\inv - \m{E}_{(i,2:N)} \right) \m{A}_{(2:N,1)},
    \end{aligned}
    \end{equation}
    where $\alpha_i=\tilde{\m{e}}_i\T \g{\alpha}$ following the definition of $\g{\alpha}$ in Eq.~\eqref{alpha}, and $\tilde{\m{e}}_i\T$ is the $i^{\tx{th}}$ row of $\m{I}_{N-1}$. By Lemma~\ref{lemma EA=I}, $\tilde{\m{e}}_i\T [ \m{A}_{(2:N,2:N)}]\inv = \m{E}_{(i,2:N)}$ and $\m{A}\sdag_{(1,:)}\m{D}\sdag_{(:,1:N-2)}=\m{0}$.

    Finally, for $i=N-1$, the proof follows very similarly as for $i=1,\ldots,N-2$. Therefore,
    \begin{equation}
    \begin{aligned}
        \sum_{j=1}^N A\sdag_{1j}D\sdag_{j,N-1} &= -w_{N}E_{N-1,1} - \frac{w_{N}}{w_1}\alpha_{N-1} + \left(-w_{N}\m{E}_{(N-1,2:N)}+\tilde{\m{e}}_{N-1}\T\right)\left( \m{1} - \frac{1}{w_1}\m{A}_{(2:N,1)} \right)
        \\&= -\frac{w_{N}}{w_1}\left( {\alpha}_{N-1} - \m{E}_{(N-1,2:N)}\m{A}_{(2:N,1)} \right) + \tilde{\m{e}}_{N-1}\T\m{1} - \frac{1}{w_1}\tilde{\m{e}}_{N-1}\T \m{A}_{(2:N,1)}\\
        &= -\frac{w_{N}}{w_1}\left( \tilde{\m{e}}_{N-1}\T \left[ \m{A}_{(2:N,2:N)}\right]\inv - \m{E}_{(N-1,2:N)} \right) \m{A}_{(2:N,1)} \\
        &= 0,
    \end{aligned}
    \end{equation}
    where $\tilde{\m{e}}_{N-1}\T\m{1}=1$ and $\tilde{\m{e}}_{N-1}\T \m{A}_{(2:N,1)}=w_1$ because $A_{Ni}=w_i,~(i=1,\ldots,N)$. This completes the proof.
\end{proof}

\begin{proposition}\label{proposition costate identity}
    $\ds \frac{1}{w_1}\m{1}\g{\alpha}\T\m{W}_{2:N} + \m{A}\sdag_{(2:N,:)}\m{D}\sdag = \m{I}_{N-1}$.
\end{proposition}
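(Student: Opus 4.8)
The identity to be proved is an equality of $(N-1)\times(N-1)$ matrices, and the plan is to verify it entry by entry, following the same pattern as the proof of Proposition~\ref{proposition AD=0}. I would index the rows of the product $\m{A}\sdag_{(2:N,:)}\m{D}\sdag$ by $r=1,\ldots,N-1$, so that row $r$ comes from row $i=r+1$ of $\m{A}\sdag$, and index the columns by $k=1,\ldots,N-1$. Then
\[
\left(\m{A}\sdag_{(2:N,:)}\m{D}\sdag\right)_{rk}=\sum_{j=1}^N A\sdag_{r+1,j}\,D\sdag_{jk},
\]
and I would substitute the definitions \eqref{Adag} of $A\sdag_{ij}$ and \eqref{Ddag} of $D\sdag_{jk}$, peeling off the $j=1$ term because $\m{D}\sdag$ has a distinguished first row. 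Two structural facts about $\m{A}$ drive the simplification: its first row is zero, hence $A\sdag_{r+1,1}=w_1$; and its last row equals the LGL weights, $A_{Nj}=w_j$, hence $A\sdag_{r+1,N}=w_N(1-w_{r+1}/w_{r+1})=0$ for $r+1<N$, which annihilates the last-column contribution regardless of the Kronecker-delta correction carried by $\m{D}\sdag$.

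Carrying out the substitution, the terms involving $\m{E}$ split into $-w_{k+1}\big(E_{k1}+\sum_{j=2}^N E_{kj}\big)$, which vanishes by Lemma~\ref{lemma E1=0} since $\sum_{j=2}^N E_{kj}=-E_{k1}$, and a term proportional to $\m{E}_{(k,2:N)}\m{A}_{(2:N,r+1)}$. For the latter, column $r+1$ of $\m{A}$ restricted to rows $2,\ldots,N$ is the $r^{\tx{th}}$ column of $\m{A}_{(2:N,2:N)}$, so Lemma~\ref{lemma EA=I} gives $\m{E}_{(k,2:N)}\m{A}_{(2:N,r+1)}=\delta_{kr}$. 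What survives, uniformly in $r$ and $k$, is
\[
\left(\m{A}\sdag_{(2:N,:)}\m{D}\sdag\right)_{rk}=\delta_{rk}-\frac{w_{k+1}}{w_1}\alpha_k .
\]
Because $\m{1}\g{\alpha}\T$ has every row equal to $\g{\alpha}\T$, the $(r,k)$ entry of $w_1^{-1}\m{1}\g{\alpha}\T\m{W}_{2:N}$ is $w_{k+1}\alpha_k/w_1$, independent of $r$; adding the two matrices therefore yields $\delta_{rk}$, i.e.\ $\m{I}_{N-1}$, as claimed.

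The bulk of the work — and the only place where care is needed — is the two boundary configurations, exactly as in Proposition~\ref{proposition AD=0}: the last row of $\m{A}\sdag_{(2:N,:)}$, i.e.\ $i=N$, where $A\sdag_{Nj}=w_j$ with no subtraction, and the last column of $\m{D}\sdag$, i.e.\ $k=N-1$, where the $\delta_{Ni}\delta_{(N-1)j}/w_N$ term of \eqref{Ddag} is active. In the first case the explicit $\delta_{(N-1)k}/w_N$ term of $\m{D}\sdag$, after multiplication by $w_N$, supplies exactly the $\delta_{rk}$ that the $\m{E}\m{A}$ product produced in the generic rows, so the uniform formula persists; in the second case the correction lives only at $j=N$ and is killed by $A\sdag_{r+1,N}=0$ for $r<N-1$. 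The main obstacle is thus purely bookkeeping: keeping the index shift between the full $N\times N$ matrices $\m{A}\sdag$ and $\m{A}$ and the $(N-1)$-dimensional objects $\m{A}_{(2:N,2:N)}$, $\m{W}_{2:N}$, and $\g{\alpha}$ consistent throughout. A shorter but less self-contained route would be to note that combining \eqref{integral transformed adj rewritten} with Proposition~\ref{proposition AD=0}, given that the transformed adjoint systems \eqref{integral transformed adj} and \eqref{pseudoderiv transformed adj} coincide, already forces the bottom-block identity $w_1^{-1}\m{1}\g{\alpha}\T\m{W}_{2:N}+\m{A}\sdag_{(2:N,:)}\m{D}\sdag=\m{I}_{N-1}$; I would nevertheless present the direct entrywise computation as the primary argument.
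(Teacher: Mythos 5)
Your proof is correct and rests on exactly the same ingredients as the paper's: Lemma~\ref{lemma E1=0}, Lemma~\ref{lemma EA=I}, and the structural facts that the first row of $\m{A}$ is zero while its last row equals the LGL weights. The only difference is organizational---you carry out the cancellation entrywise (arriving at the uniform formula $(\m{A}\sdag_{(2:N,:)}\m{D}\sdag)_{rk}=\delta_{rk}-w_{k+1}\alpha_k/w_1$ before adding the rank-one term $w_1\inv\m{1}\g{\alpha}\T\m{W}_{2:N}$), whereas the paper performs the same steps in block-matrix form by treating rows $2$ through $N-1$ and row $N$ separately; the mathematical content is identical.
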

\begin{proof}
    The expression in the statement of the proposition can be written equivalently as
    \begin{equation}\label{prop costate iden (a)}
        \begin{bmatrix} \ds
            \frac{1}{w_1}\m{1}\g{\alpha}\T\m{W}_{2:N} + \m{A}\sdag_{(2:N-1,:)}\m{D}\sdag \\ \ds
            \frac{1}{w_1}\g{\alpha}\T\m{W}_{2:N} + \m{A}\sdag_{(N,:)}\m{D}\sdag
        \end{bmatrix} = \begin{bmatrix}
            \m{I}_{N-2} & \m{0} \\ \m{0} & 1
        \end{bmatrix}.
    \end{equation}
    Following Eq.~\eqref{Adag}, $\m{A}\sdag_{(2:N,:)}$ can be written compactly as
    \begin{equation}
        \m{A}\sdag_{(2:N,:)} = \begin{bmatrix}
            w_1\m{1} & \left( \m{11}\T - \left[ \m{W}_{2:N-1} \right]\inv \m{A}_{(2:N-1,2:N-1)}\T  \right)\m{W}_{2:N-1} & w_N \left( \m{1} - \left[ \m{W}_{2:N-1} \right]\inv \m{A}_{(N,2:N-1)}\T  \right) \\
            w_1 & \m{1}\T \m{W}_{2:N-1} & w_N
        \end{bmatrix}.
    \end{equation}
    Next, following Eq.~\eqref{Ddag}, $\m{D}\sdag$ can be written compactly as
    \begin{equation}
        \m{D}\sdag = \begin{bmatrix} \ds
            -\frac{1}{w_1}\left( \m{E}_{(:,1)}\T + \frac{1}{w_1}\g{\alpha}\T \right) \m{W}_{2:N} \\
            -\left[ \m{W}_{2:N-1} \right]\inv \m{E}_{(:,2:N-1)}\T \m{W}_{2:N} \\ \ds
            -\frac{1}{w_N}\left( \m{E}_{(:,N)}\T - \frac{1}{w_N}\tilde{\m{e}}_{N-1}\T \right) \m{W}_{2:N}
        \end{bmatrix}.
    \end{equation}

    Now, the left-hand side of the first row of Eq.~\eqref{prop costate iden (a)} is
    \begin{equation}\label{prop costate iden (b)}
    \begin{aligned}
        \frac{1}{w_1}\m{1}\g{\alpha}\T\m{W}_{2:N} + \m{A}\sdag_{(2:N-1,:)}\m{D}\sdag &= -\m{1} \left( \m{1}\T \m{E}\T \right) \m{W}_{2:N} + \left[ \m{W}_{2:N-1} \right]\inv \left( \m{A}_{(2:N,2:N-1)}\T \m{E}_{(:,2:N)}\T \right) \m{W}_{2:N} \\ &~\quad + \frac{1}{w_N}\m{1}\tilde{\m{e}}_{N-1}\T \m{W}_{2:N} - \frac{1}{w_N} \left[ \m{W}_{2:N-1} \right]\inv \m{A}_{(N,2:N-1)}\T \tilde{\m{e}}_{N-1}\T \m{W}_{2:N}.
    \end{aligned}
    \end{equation}
    By Lemma~\ref{lemma E1=0}, $\m{1}\T\m{E}\T=\m{0}$. Additionally, $\m{A}_{(N,2:N-1)}\T = \m{W}_{2:N-1}\m{1}$. Lastly, Lemma~\ref{lemma EA=I} implies
    \begin{equation}
    \m{A}_{(2:N,2:N-1)}\T \m{E}_{(:,2:N)}\T = \begin{bmatrix} \m{I}_{N-2} & \m{0} \end{bmatrix}.    
    \end{equation}
    Therefore, Eq.~\eqref{prop costate iden (b)} simplifies to
    \begin{equation}\label{prop costate iden (c)}
    \begin{aligned}
        \frac{1}{w_1}\m{1}\g{\alpha}\T\m{W}_{2:N} + \m{A}\sdag_{(2:N-1,:)}\m{D}\sdag &= \left[ \m{W}_{2:N-1} \right]\inv \begin{bmatrix}
            \m{I}_{N-2} & \m{0}
        \end{bmatrix} \begin{bmatrix}
            \m{W}_{2:N-1} & \m{0} \\ \m{0} & w_N
        \end{bmatrix}\\
        &= \begin{bmatrix}
            \m{I}_{N-2} & \m{0}
        \end{bmatrix}.
    \end{aligned}
    \end{equation}

    The left-hand side of the second row of Eq.~\eqref{prop costate iden (a)} simplifies to
    \begin{equation}\label{prop costate iden (d)}
    \begin{aligned}
        \frac{1}{w_1}\g{\alpha}\T\m{W}_{2:N} + \m{A}\sdag_{(N,:)}\m{D}\sdag &= -\left( \m{1}\T \m{E}\T \right) \m{W}_{2:N} + \frac{1}{w_N}\tilde{\m{e}}_{N-1}\T \m{W}_{2:N} \\
        &= \tilde{\m{e}}_{N-1}\T.
    \end{aligned}
    \end{equation}
    Finally, Eqs.~\eqref{prop costate iden (c)}-\eqref{prop costate iden (d)} can be combined to conclude that $\ds \frac{1}{w_1}\m{1}\g{\alpha}\T\m{W}_{2:N} + \m{A}\sdag_{(2:N,:)}\m{D}\sdag = \m{I}_{N-1}$.
\end{proof}

The results of Propositions~\ref{proposition AD=0} and \ref{proposition costate identity} lead to the following theorem regarding the transformed adjoint system of the integral form in Section~\ref{subsect: transformed KKT int}.

\begin{theorem}
    If the transformed adjoint system of Eq.~\eqref{integral transformed adj rewritten} is satisfied, then $\m{A}\sdag$ is an integration matrix for the space of polynomials of degree at most $N-3$. More precisely, if \[\m{A}\sdag_{(1,:)}\m{D}\sdag = \m{0}\] and \[\ds \frac{1}{w_1}\m{1}\g{\alpha}\T\m{W}_{2:N} + \m{A}\sdag_{(2:N,:)}\m{D}\sdag = \m{I}_{N-1},\] then $\m{A}\sdag$ must be an integration matrix for the space of polynomials of degree $N-3$ or less.
\end{theorem}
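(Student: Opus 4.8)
Since Propositions~\ref{proposition AD=0} and~\ref{proposition costate identity} are already established, the theorem reduces to deducing from the two matrix identities they supply that $\m{A}\sdag$ is an integration matrix for polynomials of degree at most $N-3$. My plan is to prove this directly: show that $\m{A}\sdag$ sends the vector of LGL-node values of any polynomial $p$ with $\deg p\le N-3$ to the vector of node values of its antiderivative, using $\m{D}\sdag$ as the bridge between the two (the claim is vacuous for $N=2$, so assume $N\ge 3$). Fix such a $p$ and set $v(\tau):=\int_{-1}^{\tau}p(s)\,\dt{s}$, so that $\deg v\le N-2$, $\dot v=p$, and $v(-1)=0$. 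Write $\m{v}\in\R^{N}$ for the vector with components $v_j:=v(\tau_j)$ and $\m{p}\in\R^{N}$ for the vector with components $p(\tau_j)$, $(j=1,\ldots,N)$.

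The first step is to check that $\m{v}$ satisfies the compatibility relation $v_1=w_1\inv\g{\alpha}\T\m{W}_{2:N}\m{v}_{2:N}$ that appears in the hypothesis of Theorem~\ref{theorem Ddag}. As in the proof of that theorem, substituting the explicit formula for $\g{\alpha}$ from Lemma~\ref{lemma-alpha} shows this relation is equivalent to $\sum_{j=1}^{N}w_j\dot{\C{L}}_N(\tau_j)v_j=0$. Because $\dot{\C{L}}_N$ has degree $N-1$ and $v$ has degree at most $N-2$, their product has degree at most $2N-3$, so $N$-point LGL quadrature integrates it exactly and the sum equals $\int_{-1}^{+1}\dot{\C{L}}_N(\tau)v(\tau)\,\dt{\tau}$; integrating by parts and using $\C{L}_N(\pm 1)=0$ leaves $-\int_{-1}^{+1}\C{L}_N(\tau)\dot v(\tau)\,\dt{\tau}$, which vanishes since $\C{L}_N$ is a linear combination of $P_N$ and $P_{N-2}$ (via the classical identity $(1-\tau^2)\dot{P}_{N-1}(\tau)=\frac{N(N-1)}{2N-1}(P_{N-2}(\tau)-P_N(\tau))$) and is therefore orthogonal to $\dot v$, whose degree is at most $N-3$. (Equivalently, this follows from a dimension count against the degree characterization of Theorem~4 of Ref.~\cite{Garrido2023} invoked in the proof of Theorem~\ref{theorem Ddag}.) With the compatibility relation verified, Theorem~\ref{theorem Ddag} yields $(\m{D}\sdag\m{v}_{2:N})_j=\dot v(\tau_j)=p(\tau_j)$, that is, $\m{D}\sdag\m{v}_{2:N}=\m{p}$.

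The remainder is pure assembly. Left-multiplying $\m{p}=\m{D}\sdag\m{v}_{2:N}$ by $\m{A}\sdag$ and splitting $\m{A}\sdag$ into its first row and its rows $2$ through $N$: Proposition~\ref{proposition AD=0} gives $\m{A}\sdag_{(1,:)}\m{D}\sdag\m{v}_{2:N}=\m{0}$, which matches $v_1=v(-1)=\int_{-1}^{-1}p(\tau)\,\dt{\tau}=0$; and Proposition~\ref{proposition costate identity} gives $\m{A}\sdag_{(2:N,:)}\m{D}\sdag=\m{I}_{N-1}-w_1\inv\m{1}\g{\alpha}\T\m{W}_{2:N}$, so $\m{A}\sdag_{(2:N,:)}\m{D}\sdag\m{v}_{2:N}=\m{v}_{2:N}-\m{1}(w_1\inv\g{\alpha}\T\m{W}_{2:N}\m{v}_{2:N})=\m{v}_{2:N}-\m{1}v_1=\m{v}_{2:N}$, again by the compatibility relation and $v_1=0$. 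Stacking the two blocks gives $\m{A}\sdag\m{p}=\m{v}$, i.e.\ $(\m{A}\sdag\m{p})_i=v(\tau_i)=\int_{-1}^{\tau_i}p(\tau)\,\dt{\tau}$ for every $i=1,\ldots,N$. Since $p$ was an arbitrary polynomial of degree at most $N-3$, this is exactly the assertion that $\m{A}\sdag$ is an integration matrix for the space of polynomials of degree at most $N-3$.

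I expect the only genuinely delicate point to be the first step — that the value vector of every polynomial of degree at most $N-2$ satisfies the compatibility relation of Theorem~\ref{theorem Ddag}, which is what licenses applying $\m{D}\sdag$ to $\m{v}_{2:N}$ — and even that collapses to the single orthogonality fact that $\C{L}_N$ lies in the span of $P_N$ and $P_{N-2}$. Once that is in place, the conclusion is a two-line computation that merely substitutes the already-proven Propositions~\ref{proposition AD=0} and~\ref{proposition costate identity} into $\m{A}\sdag\m{D}\sdag\m{v}_{2:N}$.
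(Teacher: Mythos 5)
Your proof is correct, and it uses the same three ingredients as the paper (Theorem~\ref{theorem Ddag} together with Propositions~\ref{proposition AD=0} and \ref{proposition costate identity}), but it takes a more explicit and, in one respect, more complete route. The paper's proof stays in the costate setting: it notes that $\m{D}\sdag\g{\Lambda}_{2:N}$ is the derivative-value vector of a degree-$(N-2)$ polynomial and that substituting the two propositions into Eq.~\eqref{integral transformed adj rewritten} yields the identity $\g{\Lambda}=\m{1}\g{\Lambda}_1+\m{A}\sdag\m{D}\sdag\g{\Lambda}_{2:N}$; there the compatibility hypothesis of Theorem~\ref{theorem Ddag} is automatic because $\g{\Lambda}_1$ is \emph{defined} by Eq.~\eqref{Lambda pseudoderiv}. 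You instead prove the integration-matrix property with its quantifier spelled out: for an arbitrary polynomial $p$ of degree at most $N-3$ you form its antiderivative $v$ with $v(-1)=0$, verify that the node values of $v$ satisfy the relation $v_1=w_1\inv\g{\alpha}\T\m{W}_{2:N}\m{v}_{2:N}$ (via exactness of $N$-point LGL quadrature for degree $2N-3$, integration by parts, and the fact that $\C{L}_N$ lies in the span of $P_{N-2}$ and $P_N$, hence is orthogonal to $\dot v$), and then assemble $\m{A}\sdag\m{p}=\m{v}$ blockwise from the two propositions. That verification step is the genuine addition: it supplies the converse direction (every degree-$(N-2)$ interpolant satisfies the compatibility identity) that the paper's two-sentence argument leaves implicit, so your version is self-contained and establishes the claim for the whole polynomial space, at the modest cost of one extra orthogonality computation; the paper's version is shorter because it leans on the costate context in which the hypothesis of Theorem~\ref{theorem Ddag} holds by construction.
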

\begin{proof}
    By Theorem~\ref{theorem Ddag}, $\m{D}\sdag$ is a differentiation matrix for the space of polynomials of degree at most $N-2$. Therefore, $\m{D}\sdag \g{\Lambda}_{2:N}$ approximates the costate dynamics as a polynomial of degree at most $N-3$. If the results of Propositions~\ref{proposition AD=0} and \ref{proposition costate identity} are substituted into Eq.~\eqref{integral transformed adj rewritten}, then $\g{\Lambda} = \m{1}\g{\Lambda}_1 + {\m{A}}\sdag\m{D}\sdag \g{\Lambda}_{2:N}$ is satisfied which concludes the proof.
\end{proof}

Additionally, $\m{A}\sdag\in\R^{N\times N}$ is full rank and, thus, invertible. Therefore, Eq.~\eqref{integral transformed adj} can be written equivalently as 
\begin{equation}\label{integral to deriv transformed adj}
    \m{D}\sddag \g{\Lambda} = -\nabla_{X}\left\langle \g{\Lambda}, \m{F}\right\rangle + \frac{\m{e}_1}{w_1}(\g{\mu} - \g{\Lambda}_1)+\frac{\m{e}_N}{w_N}(\g{\Lambda}_N - \nabla \Phi(\m{X}_N)), 
\end{equation}
where
\begin{equation}\label{Ddag square}
    \quad \m{D}\sddag := 
    [\m{A}\sdag]\inv \begin{bmatrix}
        0 & \m{0} \\ -\m{1} & \m{I}_{N-1}
    \end{bmatrix} \in \R^{N\times N},
\end{equation}
is a differentiation matrix for the space of polynomials of degree at most $N-2$. 

Both $\m{D}\sdag$ and $\m{D}\sddag$ operate on polynomial values to give the derivative at the collocation points. However, $\m{D}\sdag$ operates on the polynomial values $y(\tau_i),~2\leq i \leq N$, while $\m{D}\sddag$ operates on the polynomial values $y(\tau_i),~1\leq i \leq N$, where $y$ is a polynomial of degree at most $N-2$.
Because the first row of $\m{A}\sdag$ is not zero, the first equation of the transformed adjoint system in Eq.~\eqref{integral transformed adj} is not omitted as was done for the state dynamics in Eq.~\eqref{LGL integral form}. In fact, if the costate solution is a polynomial of degree at most $N-2$, then $\m{A}\sdag_{(1,:)}\dot{\g{\Lambda}}=\m{0}$ and Eq.~\eqref{integral transformed adj} is satisfied. Because the first equation of the transformed adjoint system in Eq.~\eqref{integral transformed adj} is not omitted, inversion of $\m{A}\sdag$ leads to the equivalent transformed adjoint system in Eq.~\eqref{integral to deriv transformed adj}, where it is seen that $\m{D}\sddag$ operates on the costate values at all the LGL points. However, when the LGL integral form is transformed into the derivative-like form, the transformed adjoint system can be expressed using a differentiation matrix that operates on $\g{\Lambda}_{2:N}$ because ${\Lambda}_{(1,m)}$ is a linear combination of all the NLP multipliers $\m{S}_{(:,m)},~m\in\{1,\ldots,n_x\}$. Therefore, it is possible to express the transformed adjoint system in Eq.~\eqref{pseudoderiv transformed adj} using an $N\times N$ differentiation matrix, but this matrix would not be unique. That is, given the costate approximation defined in Eq.~\eqref{Lambda pseudoderiv} and some differentiation matrix $\Tilde{\m{D}}\in\R^{N\times N}$ such that
\begin{equation}\label{nonunique transformed adjoint}
    \Tilde{\m{D}} \g{\Lambda} = -\nabla_{X}\left\langle \g{\Lambda}, \m{F}\right\rangle + \frac{\m{e}_1}{w_1}(\g{\mu} - \g{\Lambda}_1)+\frac{\m{e}_N}{w_N}(\g{\Lambda}_N - \nabla \Phi(\m{X}_N)), 
\end{equation}
there exists many solutions to the elements of $\Tilde{\m{D}}$ such that Eq.~\eqref{nonunique transformed adjoint} maps to the KKT conditions in Eqs.~\eqref{pseudoderiv KKT split x1} and \eqref{pseudoderiv KKT split x2thruN}. Because $\m{D}\sdag$ is a differentiation matrix for the space of polynomials of degree at most $N-2$, the costate $\g{\lambda}$ is approximated as a polynomial of degree at most $N-2$. Moreover, $N-1$ points are necessary to uniquely define a polynomial of degree $N-2$. Therefore, the transformed adjoint system of Eq.~\eqref{pseudoderiv transformed adj} which employs $\m{D}\sdag$ implies that the costate can be approximated as a polynomial of degree $N-2$ using the discrete costate approximations at $(\tau_2,\ldots,\tau_N)$, where $\g{\lambda}(\tau_i)\approx\g{\Lambda}_i$. 

\subsection{Relationship Between the NLP Multipliers}
The NLP multipliers $\m{M}$ that are obtained from solving the NLP associated with the LGL integral form of Section~\ref{sect:LGL int} are not the same as the NLP multipliers $\m{S}$ that are obtained from solving the NLP associated with the LGL derivative-like form of Section~\ref{sect:LGL deriv}. However, they are related via a linear transformation. 
\begin{proposition}\label{proposition multipliers}
    The Lagrange multipliers $\m{M}\in\R^{(N-1)\times n_x}$ and $\m{S}\in\R^{(N-1)\times n_x}$ are related to one another by $\m{S}=[\tilde{\m{A}}_{(:,2:N)}]\T \m{M}$, assuming that $(\m{M} - [\tilde{\m{A}}_{(:,2:N)}]^{-{\sf T}} \m{S})$ does not lie in the orthogonal complement of $( \begin{bmatrix} \m{1} & -\m{I}_{N-1} \end{bmatrix} \m{X} + \tilde{\m{A}}\m{F})$ under the Frobenius inner product.
\end{proposition}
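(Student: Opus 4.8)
The plan is to use the fact that the dynamics residual of the derivative-like NLP~\eqref{pseudoderivative NLP} is nothing but the dynamics residual of the integral NLP~\eqref{integral NLP} pre-multiplied by the fixed invertible matrix $[\tilde{\m{A}}_{(:,2:N)}]\inv$, and to then read off the induced transformation of the NLP multipliers. The first step is to record the matrix identity: writing $\tilde{\m{A}}=\begin{bmatrix}\tilde{\m{A}}_{(:,1)} & \tilde{\m{A}}_{(:,2:N)}\end{bmatrix}$ and invoking the definitions of $\g{\alpha}$ in~\eqref{alpha} and of $\m{E}$ in~\eqref{pseudoderiv defect}, one has $[\tilde{\m{A}}_{(:,2:N)}]\inv\tilde{\m{A}}=\begin{bmatrix}\g{\alpha} & \m{I}_{N-1}\end{bmatrix}$ and $[\tilde{\m{A}}_{(:,2:N)}]\inv\begin{bmatrix}\m{1} & -\m{I}_{N-1}\end{bmatrix}=-\m{E}$, so that $[\tilde{\m{A}}_{(:,2:N)}]\inv\left(\begin{bmatrix}\m{1} & -\m{I}_{N-1}\end{bmatrix}\m{X}+\tilde{\m{A}}\m{F}\right)=\begin{bmatrix}\g{\alpha} & \m{I}_{N-1}\end{bmatrix}\m{F}-\m{E}\m{X}$, i.e.\ the integral residual maps exactly to the derivative-like residual.

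Next I would transport this identity to the Lagrangians using the adjoint property of the Frobenius inner product, $\langle \m{S},\m{G}\m{C}\rangle=\langle \m{G}\T\m{S},\m{C}\rangle$, with $\m{G}=[\tilde{\m{A}}_{(:,2:N)}]\inv$ and $\m{C}$ the integral dynamics residual. This rewrites the dynamics term of~\eqref{pseudoderiv Lagrangian} as $\left\langle [\tilde{\m{A}}_{(:,2:N)}]^{-{\sf T}}\m{S},\ \begin{bmatrix}\m{1} & -\m{I}_{N-1}\end{bmatrix}\m{X}+\tilde{\m{A}}\m{F}\right\rangle$, while the objective and the initial-condition term $\langle\g{\mu},\m{x}_0-\m{X}_1\rangle$ are literally identical in~\eqref{integral Lagrangian} and~\eqref{pseudoderiv Lagrangian}. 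Hence, as a function of the primal variables $(\m{X},\m{U})$, the derivative-like Lagrangian evaluated at multipliers $(\m{S},\g{\mu})$ coincides with the integral Lagrangian evaluated at $([\tilde{\m{A}}_{(:,2:N)}]^{-{\sf T}}\m{S},\g{\mu})$; since the two NLPs share the same feasible set, $([\tilde{\m{A}}_{(:,2:N)}]^{-{\sf T}}\m{S},\g{\mu})$ solves the integral KKT system~\eqref{integral KKT split x1}--\eqref{integral KKT split u} at the same primal point whenever $(\m{S},\g{\mu})$ solves the derivative-like KKT system~\eqref{pseudoderiv KKT split x1}--\eqref{pseudoderiv KKT split u}. (As an independent check, one can instead substitute $\m{S}=[\tilde{\m{A}}_{(:,2:N)}]\T\m{M}$ directly into~\eqref{pseudoderiv KKT split x1}--\eqref{pseudoderiv KKT split u} and collapse each equation onto~\eqref{integral KKT split x1}--\eqref{integral KKT split u} using $\tilde{\m{A}}_{(:,2:N)}\g{\alpha}=\tilde{\m{A}}_{(:,1)}$, $\m{E}_{(:,2:N)}\tilde{\m{A}}_{(:,2:N)}=\m{I}_{N-1}$ from Lemma~\ref{lemma EA=I}, and $\m{E}_{(:,1)}=-[\tilde{\m{A}}_{(:,2:N)}]\inv\m{1}$ from Lemma~\ref{lemma E1=0}.)

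The last step upgrades ``$[\tilde{\m{A}}_{(:,2:N)}]^{-{\sf T}}\m{S}$ is a valid integral-form multiplier'' to the exact equality $\m{M}=[\tilde{\m{A}}_{(:,2:N)}]^{-{\sf T}}\m{S}$, equivalently $\m{S}=[\tilde{\m{A}}_{(:,2:N)}]\T\m{M}$. Subtracting the two stationarity conditions forces the difference $\m{M}-[\tilde{\m{A}}_{(:,2:N)}]^{-{\sf T}}\m{S}$ (together with the difference of the two initial-condition multipliers) to be annihilated by the constraint Jacobian of the integral NLP, i.e.\ to be orthogonal, under the Frobenius inner product, to the span of the gradients of the dynamics residual $\begin{bmatrix}\m{1} & -\m{I}_{N-1}\end{bmatrix}\m{X}+\tilde{\m{A}}\m{F}$; the non-degeneracy hypothesis of the proposition is exactly what prevents a nonzero such difference, so it vanishes and the claimed relation follows. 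The main obstacle is precisely this last step: the algebra in the first two steps is routine, but one must track carefully how the coupling $\m{F}=\m{f}(\m{X},\m{U})$ and the initial-condition multiplier $\g{\mu}$ enter the stacked Jacobian, and phrase the uniqueness argument so that it matches the stated orthogonality hypothesis rather than a generic constraint-qualification assumption.
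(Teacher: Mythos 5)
Your first two steps reproduce, in slightly different order, exactly the computation in the paper's proof: the identity $[\tilde{\m{A}}_{(:,2:N)}]\inv\left( \begin{bmatrix} \m{1} & -\m{I}_{N-1} \end{bmatrix} \m{X} + \tilde{\m{A}}\m{F} \right) = \begin{bmatrix} \g{\alpha} & \m{I}_{N-1} \end{bmatrix}\m{F} - \m{EX}$ and the Frobenius/trace adjoint manipulation $\langle \m{S},\m{G}\m{C}\rangle=\langle \m{G}\T\m{S},\m{C}\rangle$ are precisely how the paper passes from Eq.~\eqref{pseudoderiv Lagrangian} to Eq.~\eqref{prop multiplier trace}. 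The divergence, and the gap, is in your closing step. The paper equates the values of the two Lagrangians \eqref{integral Lagrangian} and \eqref{pseudoderiv Lagrangian}; since the objective and the $\langle\g{\mu},\m{x}_0-\m{X}_1\rangle$ terms are identical, this collapses to the single scalar identity $\left\langle \m{M}-[\tilde{\m{A}}_{(:,2:N)}]^{-{\sf T}}\m{S},\ \begin{bmatrix} \m{1} & -\m{I}_{N-1} \end{bmatrix} \m{X} + \tilde{\m{A}}\m{F} \right\rangle = 0$, which says the difference of multipliers is Frobenius-orthogonal to the dynamics residual; the proposition's hypothesis excludes a nonzero matrix with that property, so the difference vanishes. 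You instead subtract the two stationarity systems and conclude that the multiplier difference is annihilated by the transposed constraint Jacobian, i.e.\ orthogonal to the span of the constraint gradients at the solution. That is a different orthogonality condition from the one named in the hypothesis (orthogonality to the residual matrix itself), and eliminating a nonzero difference from it requires a full-row-rank (LICQ-type) assumption on the stacked Jacobian, which the proposition does not make. You flag this mismatch yourself but never bridge it, so under the stated hypothesis your final equality $\m{S}=[\tilde{\m{A}}_{(:,2:N)}]\T\m{M}$ is asserted rather than derived.

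The repair is short and brings you back onto the paper's route: you already established in your second step that the derivative-like Lagrangian at $(\m{S},\g{\mu})$ coincides, as a function of $(\m{X},\m{U})$, with the integral Lagrangian at $([\tilde{\m{A}}_{(:,2:N)}]^{-{\sf T}}\m{S},\g{\mu})$. Equating this with the integral Lagrangian at $(\m{M},\g{\mu})$ (the equality of the two Lagrangians is the paper's starting premise) cancels the common objective and initial-condition terms and yields exactly the scalar orthogonality identity above, to which the stated hypothesis applies directly; no detour through the constraint Jacobian, and no constraint-qualification assumption, is needed.
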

\begin{proof}
The Lagrangian of the NLP associated with the LGL integral form, given by Eq.~\eqref{integral Lagrangian}, is equal to the Lagrangian of the NLP associated with the LGL derivative-like form, given by Eq.~\eqref{pseudoderiv Lagrangian}. Setting them equal to one another results in
\begin{equation}\label{prop multiplier equality}
    \left\langle \m{M}, \begin{bmatrix} \m{1} & -\m{I}_{N-1} \end{bmatrix} \m{X} + \tilde{\m{A}}\m{F} \right\rangle = \left\langle \m{S}, \left[\tilde{\m{A}}_{(:,2:N)}\right]\inv \left( \begin{bmatrix} \m{1} & -\m{I}_{N-1} \end{bmatrix} \m{X} + \tilde{\m{A}}\m{F} \right) \right\rangle,
\end{equation}
where 
\begin{equation}
    \left[\tilde{\m{A}}_{(:,2:N)}\right]\inv \left( \begin{bmatrix} \m{1} & -\m{I}_{N-1} \end{bmatrix} \m{X} + \tilde{\m{A}}\m{F} \right) \equiv \begin{bmatrix}
        \g{\alpha} & \m{I}_{N-1}
    \end{bmatrix}\m{F} - \m{EX}.
\end{equation}
Using the properties that $\langle \m{P},\m{Q}\rangle = \tx{tr}(\m{P}\T\m{Q})$ and $\tx{tr}(\m{P}\T\m{Q}) = \tx{tr}(\m{Q}\T\m{P})$ for matrices $\m{P},\m{Q}\in\R^{m\times n}$, Eq.~\eqref{prop multiplier equality} can be rewritten as
\begin{equation}\label{prop multiplier trace}
    \tx{tr}\left( \left( \begin{bmatrix} \m{1} & -\m{I}_{N-1} \end{bmatrix} \m{X} + \tilde{\m{A}}\m{F} \right)\T \m{M} \right) = \tx{tr}\left( \left( \begin{bmatrix} \m{1} & -\m{I}_{N-1} \end{bmatrix} \m{X} + \tilde{\m{A}}\m{F} \right)\T \left[\tilde{\m{A}}_{(:,2:N)}\right]^{-{\sf T}} \m{S} \right).
\end{equation}
Therefore, $\m{M} = [\tilde{\m{A}}_{(:,2:N)}]^{-{\sf T}} \m{S}$; equivalently, $\m{S}=[\tilde{\m{A}}_{(:,2:N)}]\T \m{M}$.
\end{proof}

\section{Second Integral Form of LGL Collocation}\label{sect: arxiv comparison and new results}

In this section, a second integral form of LGL collocation is derived from the integral form of LGL collocation developed in this paper (see Section \ref{sect:LGL int}).  This second integral form includes an additional integral, and it is shown that the inclusion of this additional integral is superfluous with regard to the form of the LGL NLP developed in this paper.  Next, it is shown that, while superfluous from the perspective of the NLP, the inclusion of the additional integral leads to a second integral form which is equivalent to the differential form described in Ref.~\cite{Garrido2023}. Finally, it is shown that this additional integral can be used to generate a polynomial approximation of degree $N$ for the state.  
    
\subsection{Second LGL Integral Form Using Method of Section \ref{sect:LGL int}}\label{section:second integral form}

Consider the integral of the state dynamics from $\tau=-1$ to point $\tau=\tau_{N+1}$, where $\tau_{N+1}$ is \emph{not} an LGL point.  Furthermore, let $\m{X}_{N+1}\approx\m{x}(\tau_{N+1})$ be the approximation of the state at $\tau=\tau_{N+1}$.  Recalling that the differential equation vector field is approximated as a polynomial of degree at most $N-1$ as given by Eq.~\eqref{f approx}, the integral from $\tau=-1$ to $\tau=\tau_{N+1}$ is given as
\begin{equation}\label{x superfluous}
    \m{X}_{N+1} = \m{X}_{1} + \sum_{j=1}^N \int_{-1}^{\tau_{N+1}} \m{F}_jL_j(\tau)\dt{\tau}
\end{equation}
Because $L_j(\tau),\;(i=1,\ldots,N)$ are polynomials of degree $N-1$, the integral in Eq.~\eqref{x superfluous} can be evaluated exactly using LGL quadrature to obtain
\begin{equation}\label{x superfluous LGL}
    \m{X}_{N+1} = \m{X}_{1} + \sum_{j=1}^N A_{(N+1,j)} \m{F}_j = \m{X}_{1} + \m{A}_{(N+1,:)}\m{F}
\end{equation}
where $\m{A}_{(N+1,:)}$ is treated as an additional row concatenated to the end of $\m{A}$ defined in Eq.~\eqref{Axelsson integration matrix elements}. 
Suppose now that $(\m{X}_{1:N},\m{U}_{1:N})$ is the solution to the NLP given in Eq.~\eqref{integral NLP} of Section \ref{sect:LGL int}.  Then, the value $\m{X}_{N+1}$ can be obtained purely from this solution $(\m{X}_{1:N},\m{U}_{1:N})$. Thus, the value $\m{X}_{N+1}$ is superfluous; that is, $\m{X}_{N+1}$ is \emph{not} an optimization variable in the NLP of the integral form of the LGL collocation method described in this paper.  Finally, because the integral form given in Section \ref{sect:LGL int} and the derivative-like form given in Section \ref{sect:LGL deriv} are equivalent, the value $\m{X}_{N+1}$ is superfluous to either of these NLPs.  

Although $\m{X}_{N+1}$ in Eq.~\eqref{x superfluous} is superfluous, a second integral LGL method can be constructed by augmenting Eq.~\eqref{x superfluous} to the system given in Eq.~\eqref{LGL integral form} to obtain the system
\begin{equation}\label{superfluous integral form}
\begin{bmatrix}
    \m{X}_{2:N} \\ \m{X}_{N+1}
\end{bmatrix} = \begin{bmatrix}
    \m{1}\m{X}_1 \\ \m{X}_1
\end{bmatrix} + \begin{bmatrix}
    \tilde{\m{A}}\m{F} \\ \m{A}_{(N+1,:)}\m{F}
\end{bmatrix}.
\end{equation}
Suppose now that the matrix $\m{B}$ is defined as
\begin{equation}\label{superfluous integration matrix B}
\m{B} = 
\left[
    \begin{array}{c}
        \tilde{\m{A}} \\ 
        \m{A}_{(N+1,:)} 
    \end{array}
\right]
\end{equation}
Then Eq.~\eqref{superfluous integral form} can be written as
\begin{equation}\label{LGL int using B}
    \m{X}_{2:N+1} = \m{1}\m{X}_1 + \m{B} \m{F},
\end{equation}
where matrix $\m{B}\in\bb{R}^{N\times N}$ in Eq.~\eqref{LGL int using B} is an integration matrix of rank $N$ whose elements are defined as
\begin{equation}\label{B matrix}
    B_{ij} := \int_{-1}^{\tau_i} L_j(\tau)\dt{\tau}, \quad (i=2,\ldots,N+1;~j=1,\ldots,N).
\end{equation}
The matrix $\m{B}$ is full rank because the integrals from $\tau=-1$ to $\tau=\{\tau_i ~|~ 2 \leq i \leq N+1\}$ are unique. 

\subsection{Equivalence Between Second Integral Form and Derivative Form}

Now, suppose that the differential form developed in Ref.~\cite{Garrido2023} as
\begin{equation}\label{DX=F}
    \m{D}\m{X}_{1:N+1} = \m{F}
\end{equation}
is considered, 
where $\m{D}^{N\times(N+1)}$ as defined in Ref.~\cite{Garrido2023} is a full rank differentiation matrix for the space of polynomials of degree $N$ or less\cite{Garrido2023}.  Now, it was shown in Ref.~\cite{Garrido2023} that $\m{D}\m{1} = \m{0}$. Decomposing $\m{D}$ into its first column, $\m{D}_{(:,1)}$ and its remaining $N$ columns, $\m{D}_{(:,2:N+1)}$, Eq.~\eqref{DX=F} can be written as
\begin{equation}\label{DX=F decomposed}
\begin{aligned}
    \m{D}\m{X}_{1:N+1} &= \begin{bmatrix} \m{D}_{(:,1)} & \m{D}_{(:,2:N+1)}\end{bmatrix} \begin{bmatrix}
        \m{X}_{1} \\ \m{X}_{2:N+1}
    \end{bmatrix} \\ 
    &= \m{D}_{(:,1)}\m{X}_{1} + \m{D}_{(:,2:N+1)}\m{X}_{2:N+1} = \m{F}.
\end{aligned}
\end{equation}
Then, because $\m{D}\m{1}=\m{0}$ \cite{Garrido2023} and $\m{D}$ is full rank \cite{Garrido2023}, $\m{D}_{(:,2:N+1)}$ is invertible and Eq.~\eqref{DX=F decomposed} can be written as
\begin{equation}\label{LGL int using inverse of D2N+1}
    \m{X}_{2:N+1} = \m{1}\m{X}_{1} + \left[\m{D}_{(:,2:N+1)}\right]\inv \m{F} 
\end{equation}
Because Eqs.~\eqref{LGL int using B} and \eqref{LGL int using inverse of D2N+1} are equivalent, the matrix $\m{D}_{(:,2:N+1)}$ has the property that
\begin{equation}
    \left[\m{D}_{(:,2:N+1)}\right]\inv = \m{B}. 
\end{equation}
Consequently, the solutions of the optimal control problem using the differential form of Ref.~\cite{Garrido2023} and the second integral form given in Section \ref{section:second integral form} are equivalent. 

\subsection{Further Analysis of Additional Integral}

Recall that the LGL method developed in this paper approximates the differential equation vector field as a polynomial of degree $N-1$,  but it does not explicitly approximate the state as a polynomial.  Note, however, a polynomial state approximation of degree $N$ can be obtained by employing the value $\m{X}_{N+1}$,
where it is emphasized that $\m{X}_{N+1}$ is not an optimization variable when solving the NLP of either Eq.~\eqref{integral NLP} or \eqref{pseudoderivative NLP} but is instead computed from Eq.~\eqref{x superfluous LGL} using the NLP solution $(\m{X}_{1:N},\m{U}_{1:N})$. 
As stated in Section \ref{section:second integral form}, the value $\m{X}_{N+1}$ in the second integral form is superfluous because it has no effect on the solution to the NLP. Now, because $\m{X}_{N+1}$ is superfluous with regard to the form of the LGL NLP developed in this paper, the particular choice of $\tau_{N+1}$ is arbitrary as long as $\tau_{N+1}$ is distinct from any of the $N$ LGL points, i.e. $\tau_{N+1}\neq\tau_i~\forall i\in\{1,\ldots,N\}$. Thus, a continuous approximation of the state can be constructed using an interpolating polynomial of degree $N$ with the pair of $N+1$ points $(\tau_j,\m{X}_j),~(j=1,\ldots,N+1)$.

Finally, it is important to note that both the integral form of Section \ref{sect:LGL int} and the derivative-like form of Section \ref{sect:LGL deriv} developed in this paper are computationally beneficial relative to either the second integral of Section \ref{section:second integral form} or the differential form of Ref.~\cite{Garrido2023}.  This computational benefit arises because the superfluous point $\m{X}_{N+1}$ does not appear in either the LGL integral form of Section \ref{sect:LGL int} or the LGL derivative-like form of Section \ref{sect:LGL deriv}.  Consequently, the algebraic system using either form developed in this paper contain $n_x$ fewer variables than does either the second integral form of Section \ref{section:second integral form} or the differential form in Ref.~\cite{Garrido2023} because these latter two forms contain the superfluous variable $\m{X}_{N+1}$. 

\section{Numerical Examples}\label{sect:examples}

The LGL collocation method developed in this paper, henceforth referred to as the LGL-Integration (LGL-I) method, is now demonstrated on two examples.  The first example is taken from Ref.~\cite{Garg2010} and has an analytic solution. Because this first example has an analytic solution, it is possible to show the accuracy that can be attained using the LGL method described in this paper. The second example is taken from Ref.~\cite{Bryson1975} and does not have an analytic solution. Both examples are solved using the NLP solver IPOPT \cite{Biegler2008} and all derivatives required by the NLP solver are obtained using the algorithmic differentiation software ADiGator \cite{Weinstein2017}. Finally, all results that follow were obtained using MATLAB R2023b (release 23.2.0.2668659) running on an Apple M3 MacBook Pro running macOS Sequoia Version 15.1.1 with 18 GB of RAM.

\subsection{Example 1: One-Dimensional Initial-Value Optimal Control Problem}\label{subsect: ex1}

Consider the following optimal control problem \cite{Garg2010}:
\begin{equation}\label{example-1}
    \tx{minimize}~ -y(2) \quad \tx{subject to}~ \ddt{y}{t} = \frac{5}{2}(-y + yu - u^2), \quad y(0) = 1.
\end{equation}
The analytic solution to the optimal control problem described in Eq.~\eqref{example-1} is given as
\begin{equation}
    \begin{aligned}
        y^*(t) &= 4/a(t), \\
        u^*(t) &= y^*(t)/2, \\
        \lambda_y^*(t) &= -\frac{\exp(2\ln(a(t)) - 5t/2)}{\exp(-5) + 6 + 9\exp(5)}, \\
        a(t) & = 1 + 3\exp(5t/2).    
    \end{aligned}
\end{equation}
The optimal control problem given in Eq.~\eqref{example-1} was solved using the following different methods: LG, LGR, LGL \cite{Fahroo2001} (LGL), and LGL-Integration (LGL-I), where LGL-I is the method of this paper.  Reference~\cite{Fahroo2001} states that, in practice, the LGL costate needs to be filtered and provides an acknowledgment to the source of this filter.  Consequently, the results for the LGL costate are obtained using the costate estimation method of \cite{Fahroo2001} together with the implementation of the costate filter as found in Appendix~\ref{sect: appendix filter}.\footnote{The exact steps required to implement the digital filter mentioned in Ref.~\cite{Fahroo2001} were provided to the authors by Tom Thorvaldsen of The Charles Stark Draper Laboratory via private communication.  These steps were implemented in the code shown in Appendix~\ref{sect: appendix filter} and are used via permission of Tom Thorvaldsen.} It is emphasized that the costate of the LGL-I method does not need to be filtered, as demonstrated by the following results. The numerical solutions were obtained using the NLP solver IPOPT with a convergence tolerance of $10^{-14}$.  Finally, the exact solution at the collocation points was used as an initial guess.

\subsubsection{Accuracy Analysis Using a Single Interval\label{sect:example-1-single-interval}}

The accuracy of solutions obtained when solving the optimal control problem given in Eq.~\eqref{example-1} is first analyzed for a single interval as a function of the number of collocation points, $N$. The accuracy is assessed by computing the following $L_\infty$ norm errors in the state, control, and costate, respectively, at the collocation points $(\tau_1,\ldots,\tau_N)$ as a function of $N$: 
\begin{equation}\label{abs error}
    \begin{aligned}
        e_y &= \max_{i\in[1,\ldots,N]}|y(\tau_i) - y^*(\tau_i)|, \\
        e_u &= \max_{i\in[1,\ldots,N]}|u(\tau_i) - u^*(\tau_i)|, \\
        e_{\lambda_y} &= \max_{i\in[1,\ldots,N]}|\lambda_y(\tau_i) - \lambda_y^*(\tau_i)|.
    \end{aligned}
\end{equation}

\begin{figure}[t!]
    \centering
    \begin{subfigure}{0.5\textwidth}
        \centering
        \includegraphics[width=\linewidth]{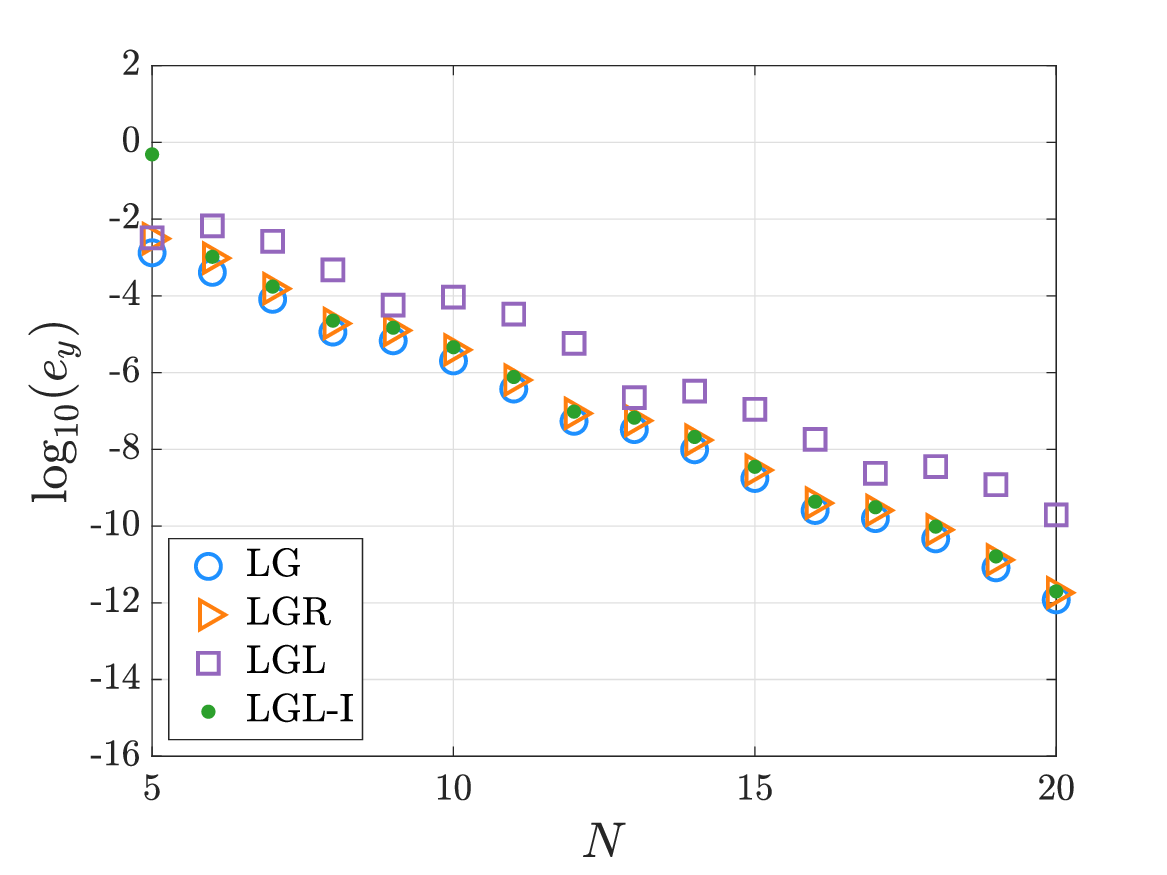}
        \caption{State error.}\label{subfig: 1Divp ey by N}
    \end{subfigure}%
    \begin{subfigure}{0.5\textwidth}
        \centering
        \includegraphics[width=\linewidth]{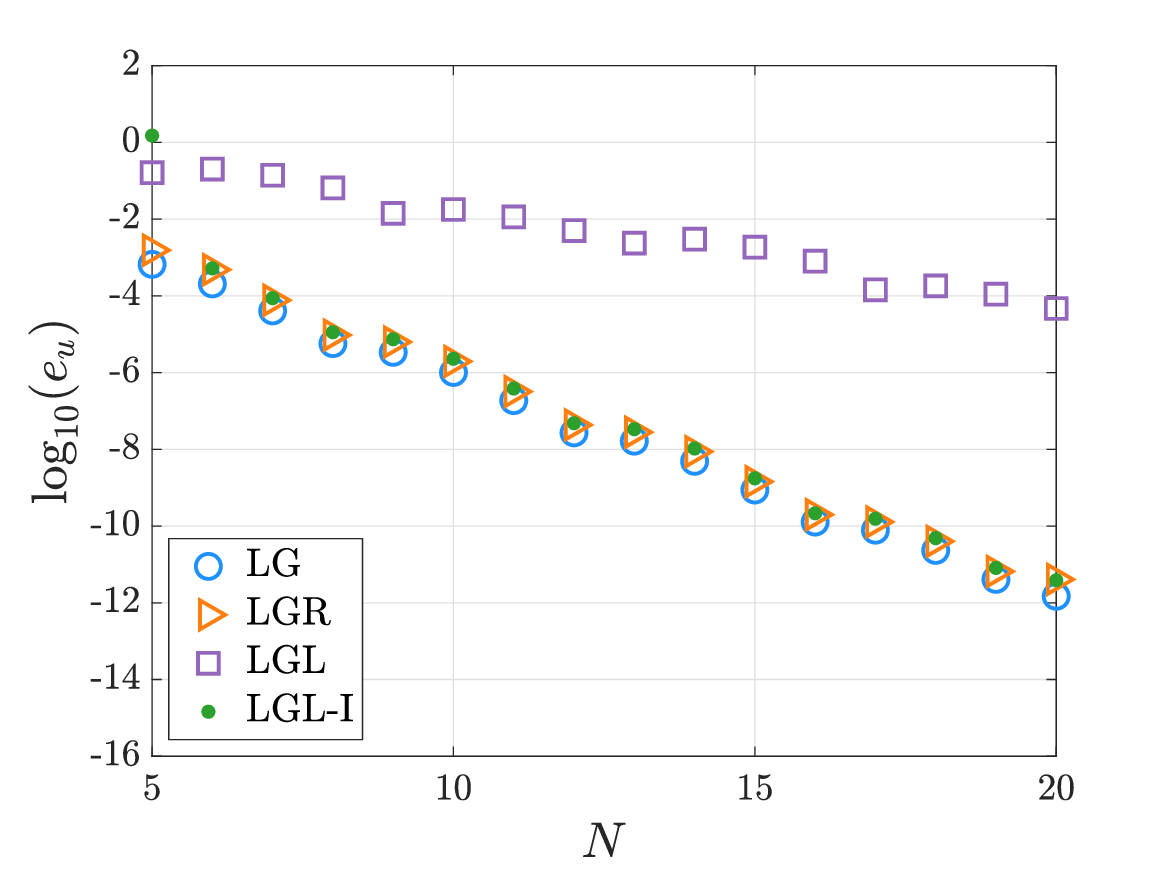}
        \caption{Control error.}\label{subfig: 1Divp eu by N}
    \end{subfigure}\\
    \begin{subfigure}{0.5\textwidth}
        \centering
        \includegraphics[width=\linewidth]{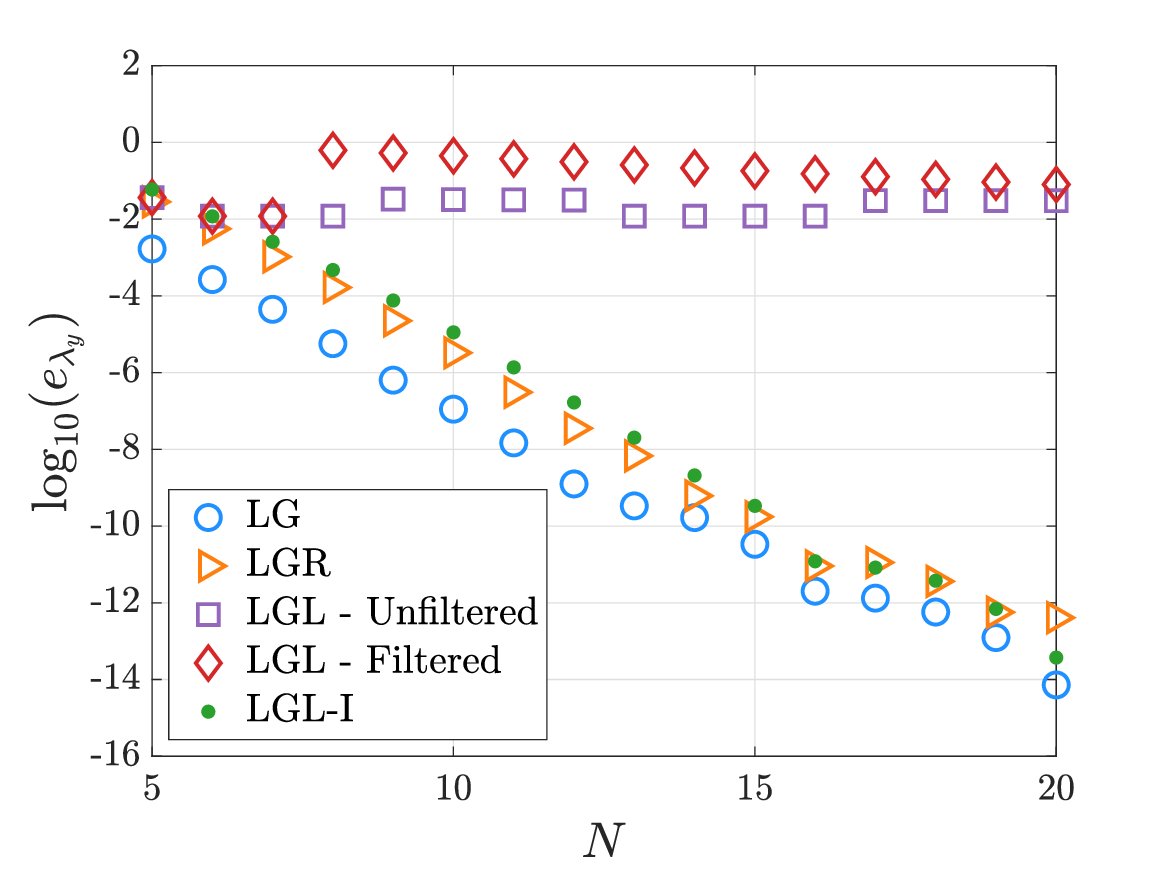}
        \caption{Costate error.}\label{subfig: 1Divp ely by N}
    \end{subfigure}%
    \caption{State, control, and costate errors for Example 1 as a function of the number of LGL points, $N$.}\label{fig: 1Divp errors by N}
\end{figure}

Figure~\ref{fig: 1Divp errors by N} shows the $L_\infty$ norm errors as a function of $N$ for the four aforementioned methods.  It is seen in Fig.~\ref{fig: 1Divp errors by N} that the state, control, and costate errors using the LG, LGR, and LGL-I schemes all decay as $N$ increases.  
The control using the LG, LGR, or LGL-I method is more accurate by at least two orders of magnitude or more compared with the LGL control solution.  More specifically, for $N\leq 10$, the error in the control using the LGL\cite{Fahroo2001} method lies between $10^{-1}$ (at $N=6$) and $10^{-2}$ (at $N=10$), while the error in the control using any of the other methods decreases from $10^{-3}$ (at $N=6$) to $10^{-6}$ (at $N=10$).  Next, it is seen in Fig.~\ref{subfig: 1Divp ely by N} that, for the LG, LGR, and LGL-I methods, the costate error decreases exponentially to near machine precision, while the LGL\cite{Fahroo2001} costate error stays at approximately $\C{O}(1)$ for all values of $N$. The digital filter improves the LGL\cite{Fahroo2001} costate solution as $N$ increases; however, the filtered LGL costate error remains largest near the interval endpoints where the amplitudes of the oscillations in the unfiltered LGL\cite{Fahroo2001} costate are largest, as shown in Fig.~\ref{fig: 1Divp costate}. 

\begin{figure}[htb!]
    \centering
    \includegraphics[width=\linewidth]{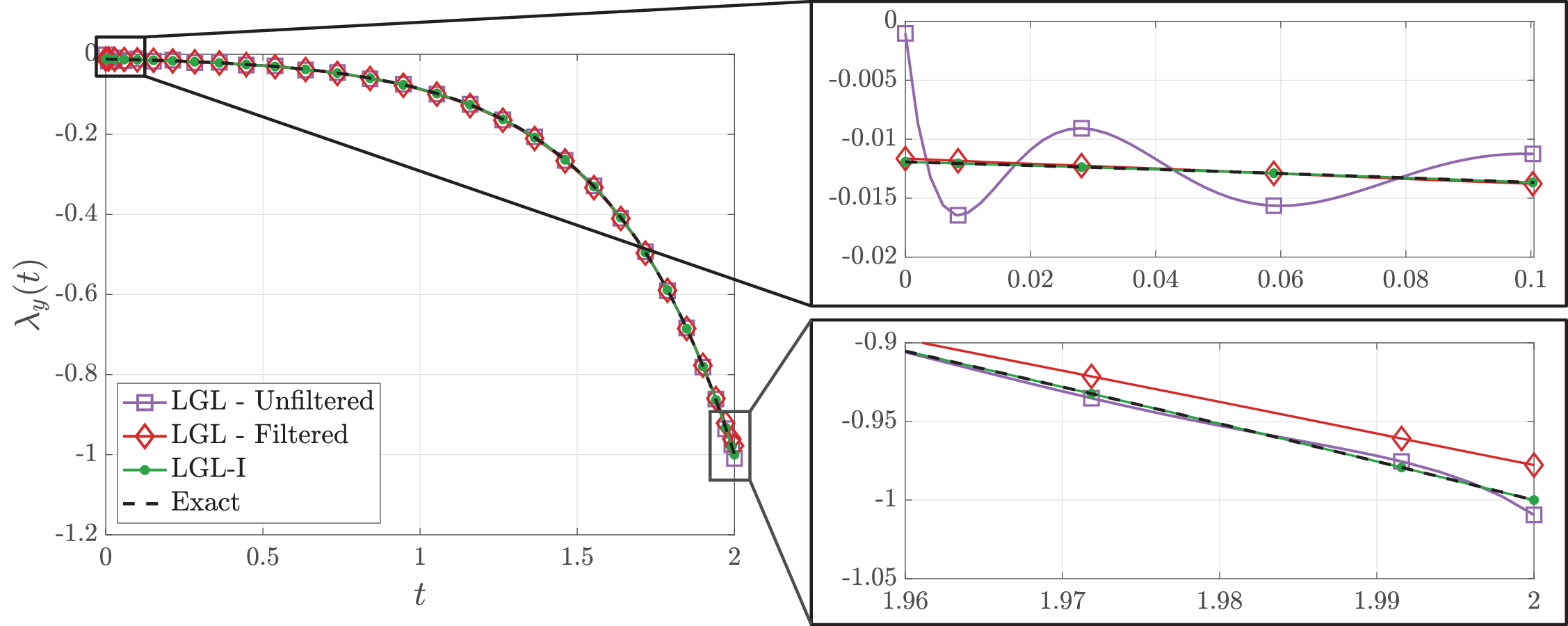}
    \caption{Costate solutions obtained using LGL collocation with $N=30$.}\label{fig: 1Divp costate}
\end{figure}

The nonconvergent behavior of the LGL\cite{Fahroo2001} costate approximation was described for this example in Ref.~\cite{Garg2010} and was shown to be attributed to the nonzero null space of the corresponding transformed adjoint system.  More precisely, the transformed adjoint system of the LGL\cite{Fahroo2001} method represents $N$ equations in $N+1$ unknowns $(\g{\Lambda}_1,\ldots,\g{\Lambda}_N,\g{\mu})$, implying an infinite number of solutions which can manifest in the form of oscillatory behavior in the costate approximation\cite{Garg2010}. Contrariwise, the transformed adjoint system of the LGL-I method of this paper, given by Eq.~\eqref{pseudoderiv transformed adj}, represents $N$ equations in $N$ unknowns $(\g{\Lambda}_2,\ldots,\g{\Lambda}_N,\g{\mu})$ because $\g{\Lambda}_1$ can be expressed in terms of $(\g{\Lambda}_2,\ldots,\g{\Lambda}_N)$.

As discussed in Section~\ref{sect: arxiv comparison and new results}, the inclusion of a noncollocated point in the state approximation is superfluous in the LGL-I method. Moreover, the location of the additional polynomial support point $\tau_{N+1}$ is arbitrary as long as it is not one of the LGL points. The differential form of LGL collocation in Ref.~\cite{Garrido2023} argues that the location of the noncollocated support point needs to be placed optimally in order to minimize Runge phenomenon in the interpolation of the state. In order to analyze the possible numerical impact of the choice of $\tau_{N+1}$, the method of Ref.~\cite{Garrido2023} can be implemented with varying values of $\tau_{N+1}\in(-1,+1)$. Then, the root mean square error (RMSE) of the state can be computed using the state polynomial approximation interpolated to a sample of 1000 equally spaced nodes in $\tau\in[-1,+1]$. Thus, the RMSE takes into consideration not just the error in the state approximation \emph{at} the collocation points, but also the error in the state approximation \emph{in between} the collocation points, where Runge phenomenon would manifest if present. Figure~\ref{fig: 1Divp taustar position} demonstrates that the paricular choice of $\tau_{N+1}$ has no impact on the solution (beyond potential round-off error) to the discretized problem posed by Ref.~\cite{Garrido2023}. Furthermore, for any chosen $\tau_{N+1}$, the state RMSE computed using the state approximation for the LGL-I method closely matches the state RMSE for the method of Ref.~\cite{Garrido2023}. Therefore, the LGL-I method achieves the same order of accuracy in the state approximation as the method of Ref.~\cite{Garrido2023} and equivalent second integral form of this paper, but it does so without needing to include an NLP variable for the state at $\tau_{N+1}$.

\begin{figure}[htb!]
    \centering
    \includegraphics[scale=0.5]{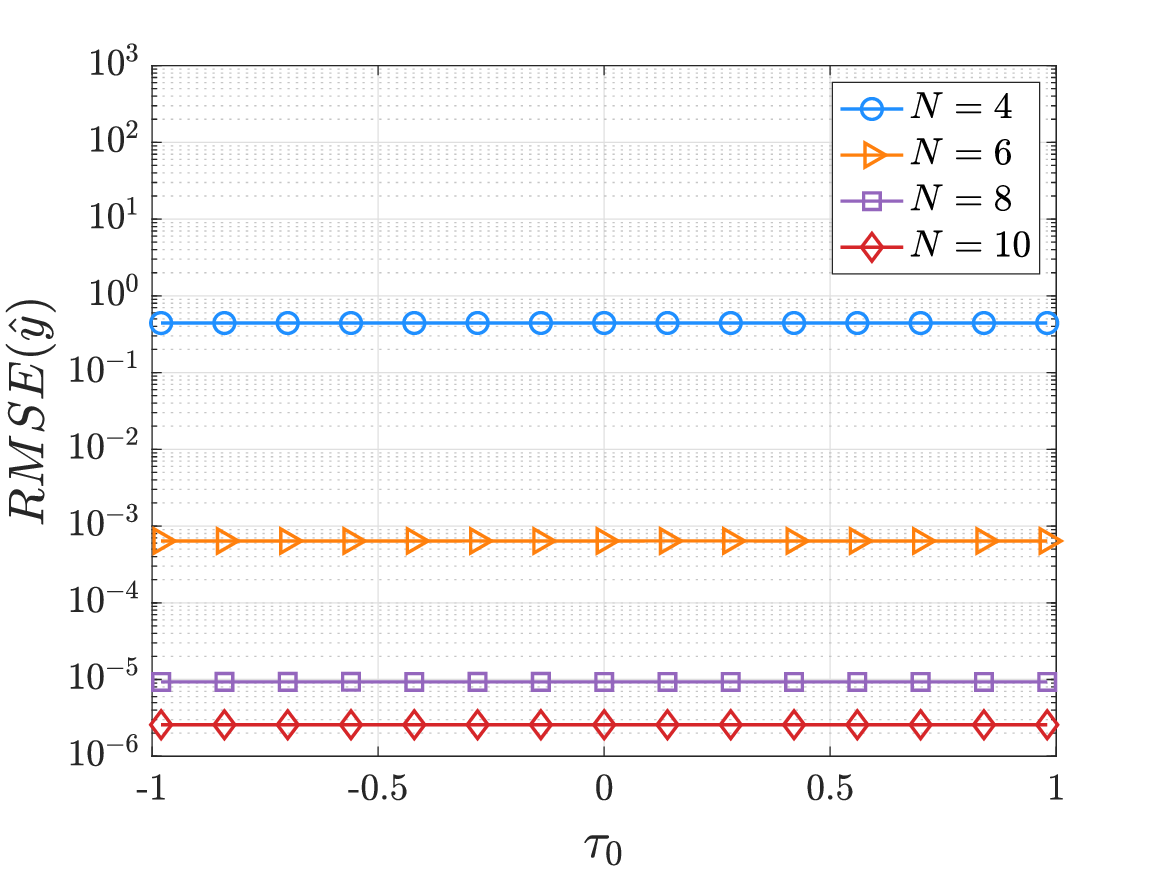}
    \caption{Root mean square error in the polynomial approximation of the state for Example 1 as a function of the location of the additional support point $\tau_{N+1}\in(-1,+1)$ when the method of Ref.~\cite{Garrido2023} (which is equivalent to the second integral form of Section \ref{section:second integral form}). The corresponding RMSE for the LGL-I method gives similar behavior and is not shown.}\label{fig: 1Divp taustar position}
\end{figure}

\subsubsection{Accuracy Analysis Using Multiple Intervals}

While Section \ref{sect:example-1-single-interval} demonstrates the accuracy of the LGL-I method as a function of $N$ using a single interval, in practice using a high-degree polynomial approximation is computationally intractible.  First, using a high-degree polynomial leads to overfitting of the data and causes numerical instability because small errors in the data can cause large variations in the polynomial values \cite{Berrut2004}.  Furthermore, as the polynomial degree increases, the main diagonal blocks in the NLP constraint Jacobian become increasingly dense which leads to computational intractability as the polynomial degree increases \cite{Darby2010}.

On the other hand, dividing the optimal control problem in to multiple interval and using a low-degree polynomial in each interval (see Appendix~\ref{sect: appendix}), the NLP constraint Jacobian becomes increasingly sparse as $N$ increases.  Furthermore, when using the derivative-like form of LGL collocation as described in Section \ref{sect:LGL deriv} (which as shown, is equivalent to the integral form described in Section \ref{sect:LGL int}), not only does the NLP constraint Jacobian become increasingly sparse, so too does the Hessian of the NLP Lagrangian.  This sparse structure increases significantly the ability to solve the NLP both accurately and efficiently using advanced NLP solvers \cite{Gill2002,Biegler2008}.  

\begin{figure}[t!]
    \centering
    \begin{subfigure}{0.5\textwidth}
        \centering
        \includegraphics[width=\linewidth]{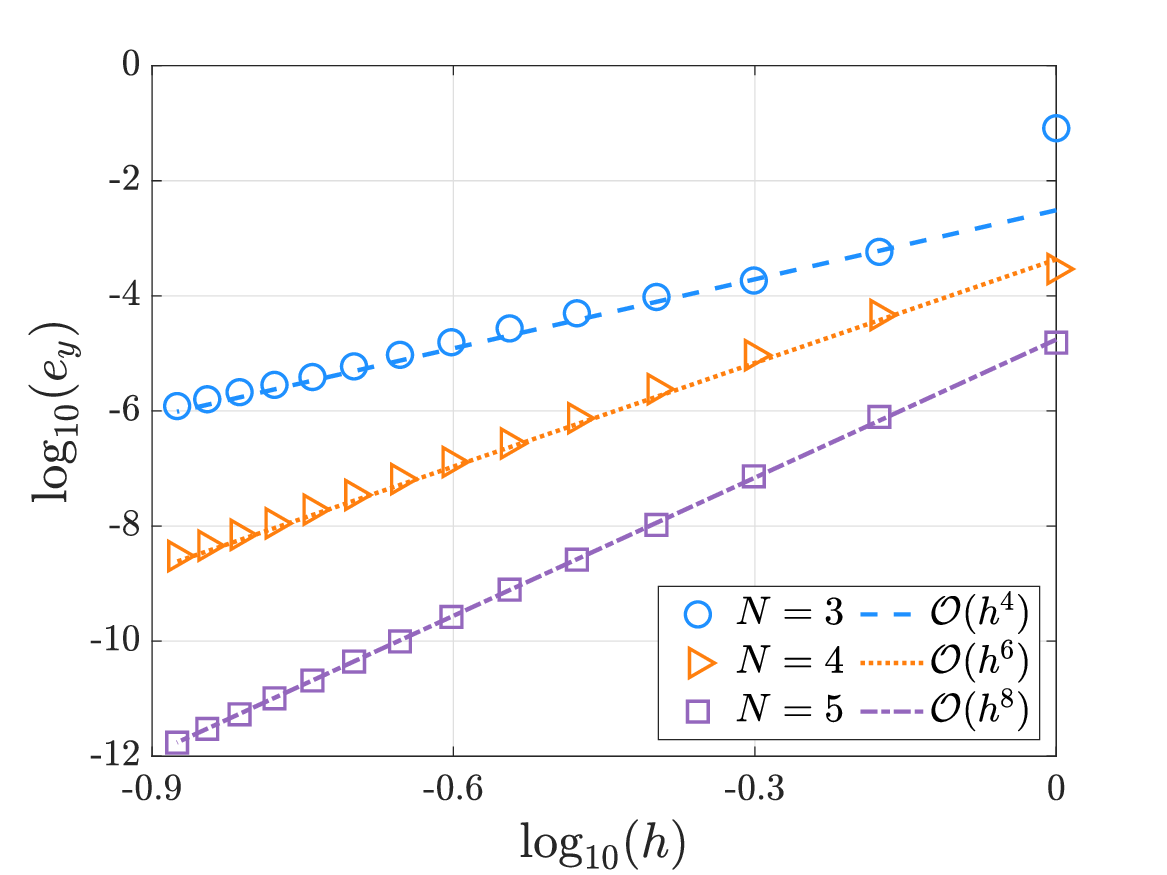}
        \caption{State error.}\label{subfig: state-error-example-1}
    \end{subfigure}%
    \begin{subfigure}{0.5\textwidth}
        \centering
        \includegraphics[width=\linewidth]{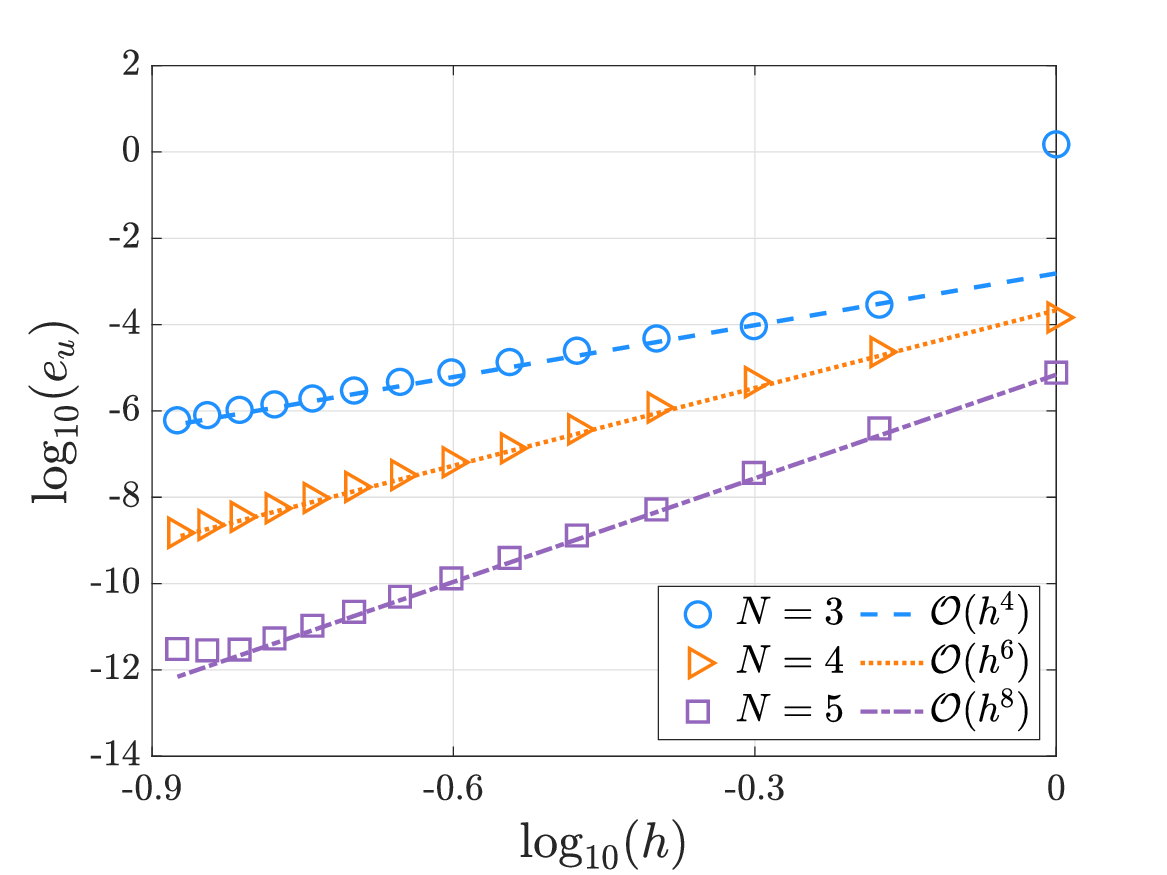}
        \caption{Control error.}\label{subfig: control-errors-example-1}
    \end{subfigure}\\
    \begin{subfigure}{0.5\textwidth}
        \centering
        \includegraphics[width=\linewidth]{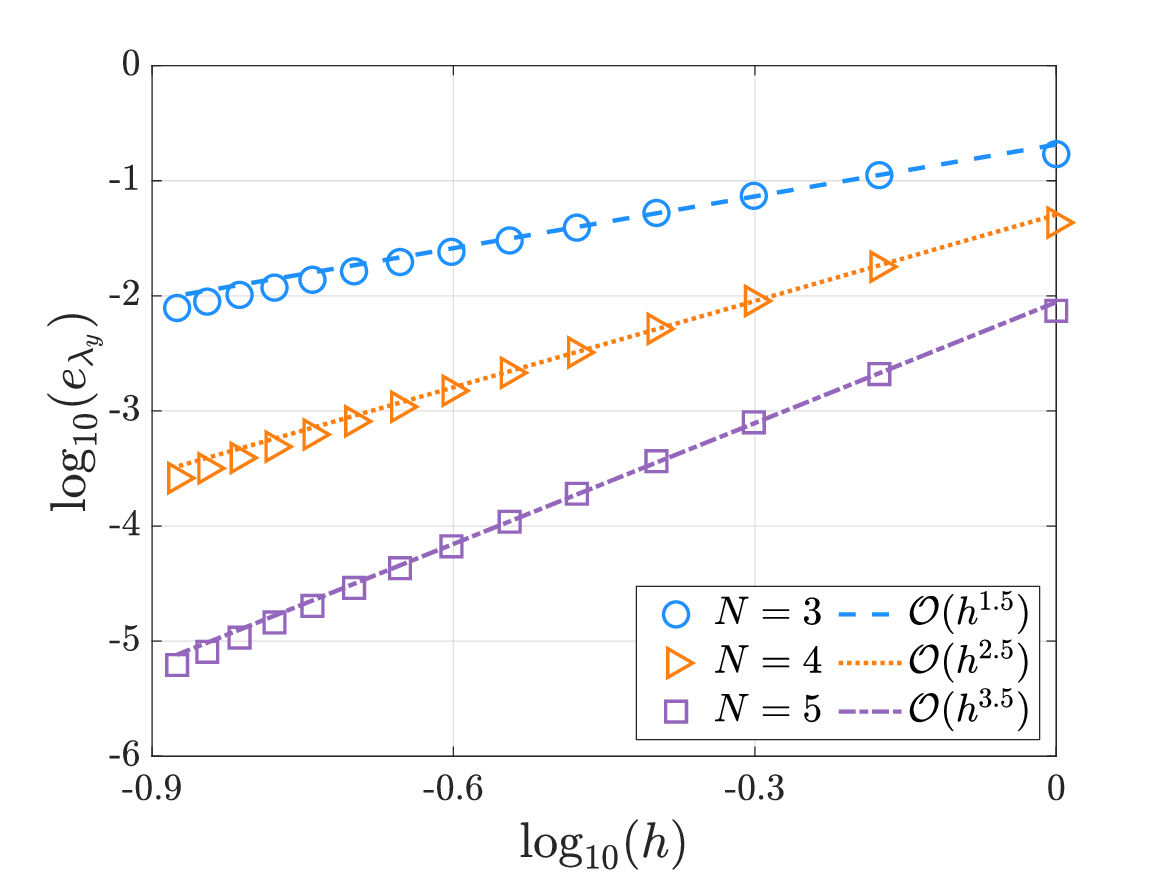}
        \caption{Costate error.}\label{subfig: costate-error-example-1}
    \end{subfigure}%
    \begin{subfigure}{0.5\textwidth}
        \centering
        \includegraphics[width=\linewidth]{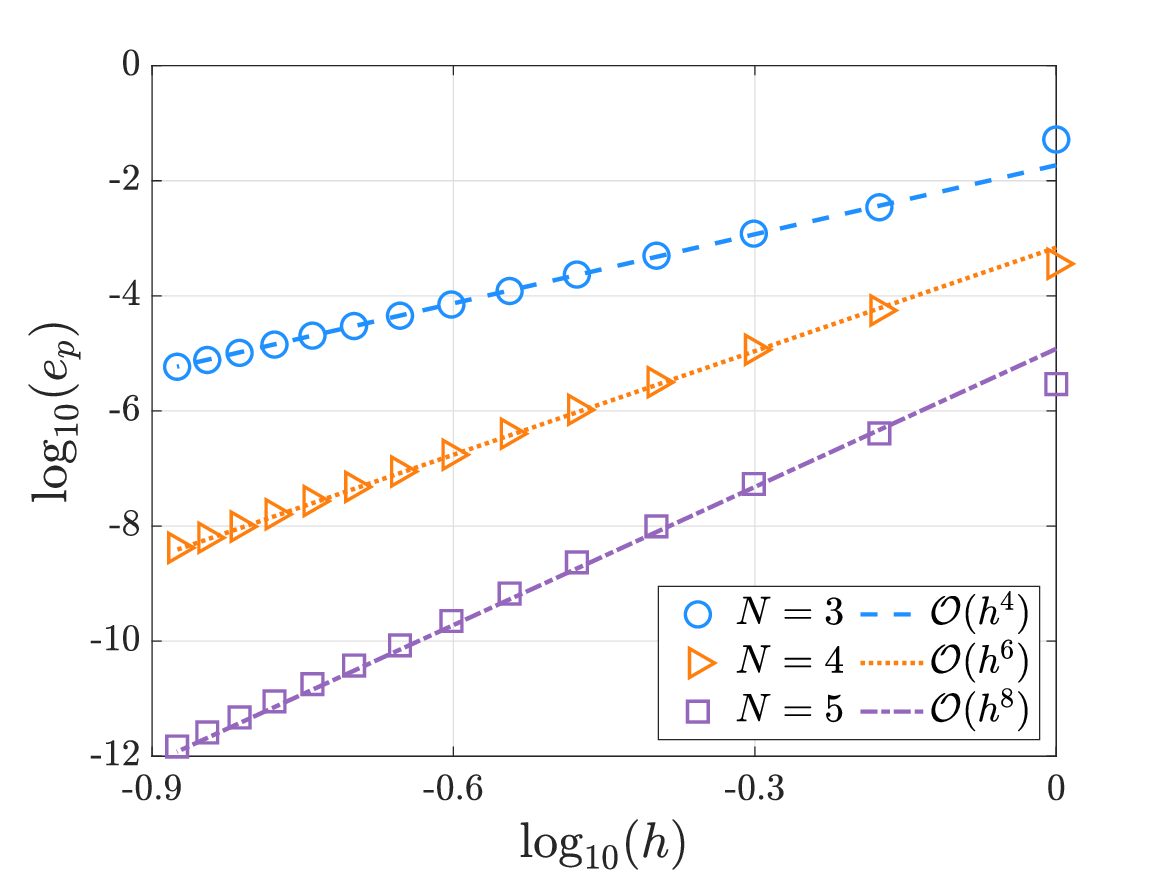}
        \caption{Superconvergent costate error.}\label{fig: costate-superconvergence-error-example-1}
    \end{subfigure}
    \caption{Solution error for Example 1 at the mesh points as a function of interval size for $N=\{3,4,5\}$.}\label{fig: 1Divp errors by h}
\end{figure}

Consider now a multiple-interval form of the LGL-I method as shown in Appendix~\ref{sect: appendix}.  It is known for each interval in a multiple-interval form that the LGL-I method satisfies the conditions of an implicit Runge-Kutta scheme \cite{Hager2000,Hager2025} when one uses the matrix $\m{A}$ as defined in Eq.~\eqref{Axelsson integration matrix elements}. Furthermore, the convergence rates for a Runge-Kutta scheme in optimal control can be determined from the order conditions given in Ref.~\cite{Hager2000} for orders up to four, while the conditions for higher-order schemes are derived in Ref.~\cite{Bonnans2006} and tabulated in Ref.~\cite{Varin2005}. Based on these order conditions, for a given mesh spacing $h$, the multiple-interval LGL discretization has a superconvergence rate  \cite{Hager2025} that is $\C{O}(h^{2N-2})$ for $N=(2,3,4,5)$ \cite{Hager2000} at the mesh points.  

To analyze the observed convergence rates of the multiple-interval LGL-I method, the example in Eq.~\eqref{example-1} is solved a second time using the LGL-I method for a uniformly spaced mesh of $K$ intervals with $N=\{3,4,5\}$ collocation points per interval. Figures~\ref{subfig: state-error-example-1} and \ref{subfig: control-errors-example-1} show the state and control for this example, while Fig.~\ref{subfig: costate-error-example-1} shows the costate obtained in the method of this paper.  It is seen that the state and control adhere to the superconvergence rate of $h^{2N-2}$, while the costate converges much more slowly than the state and control.  Note, however, that when solving the discrete approximation using multiple intervals, a second costate approximation, denoted $\m{p}_k$, exists at each mesh point $k\in(1,\ldots,K)$, and it is this costate that exhibits the superconvergence property \cite{Hager2025}. After solving the multiple-interval optimal control problem using the LGL-I method developed in this paper, this second costate $\m{p}_k$ is obtained by solving the following system of linear equations:
\begin{equation}\label{costate superconvergence eq}
    \begin{aligned}
    \m{p}_k &= \m{p}_{k+1} +
(h_{k}/2)\sum_{j=1}^N w_j
\nabla_x H(\m{X}_{j}^{(k)}, \m{U}_{j}^{(k)}, \m{q}_j^{(k)}), \quad
\m{p}_K = \nabla \Phi(\m{X}_N^{(K)}), \\
\m{q}_{i}^{(k)} &= \m{p}_{k+1} +
(h_k/2) \sum_{j=1}^N \left( \frac{w_j A_{ji}}{w_i} \right)
\nabla_x H(\m{X}_{j}^{(k)}, \m{U}_{j}^{(k)}, \m{q}_j^{(k)}), \quad
1 \le i \le N,    
    \end{aligned}
\end{equation}
where $h_k$ is the length of the $k^{\tx{th}}$ mesh interval and $H(\m{x}, \m{u}, \m{p}) = \m{p}\T \m{f}(\m{x}, \m{u})$ is the Hamiltonian (see Section~3.5 of Ref.~\cite{Hager2025} for further details).  The error in the costate approximation of Eq.~\eqref{costate superconvergence eq} is shown in Fig.~\ref{fig: costate-superconvergence-error-example-1} and it is seen that the convergence rate for $\m{p}_k$ is the same as the convergence rate for the state and control.

\subsection{Example 2: Orbit Raising Optimal Control Problem}\label{subsect: ex2}

\begin{figure}[b!]
    \centering
    \begin{subfigure}{0.45\textwidth}
        \centering
        \includegraphics[width=\linewidth]{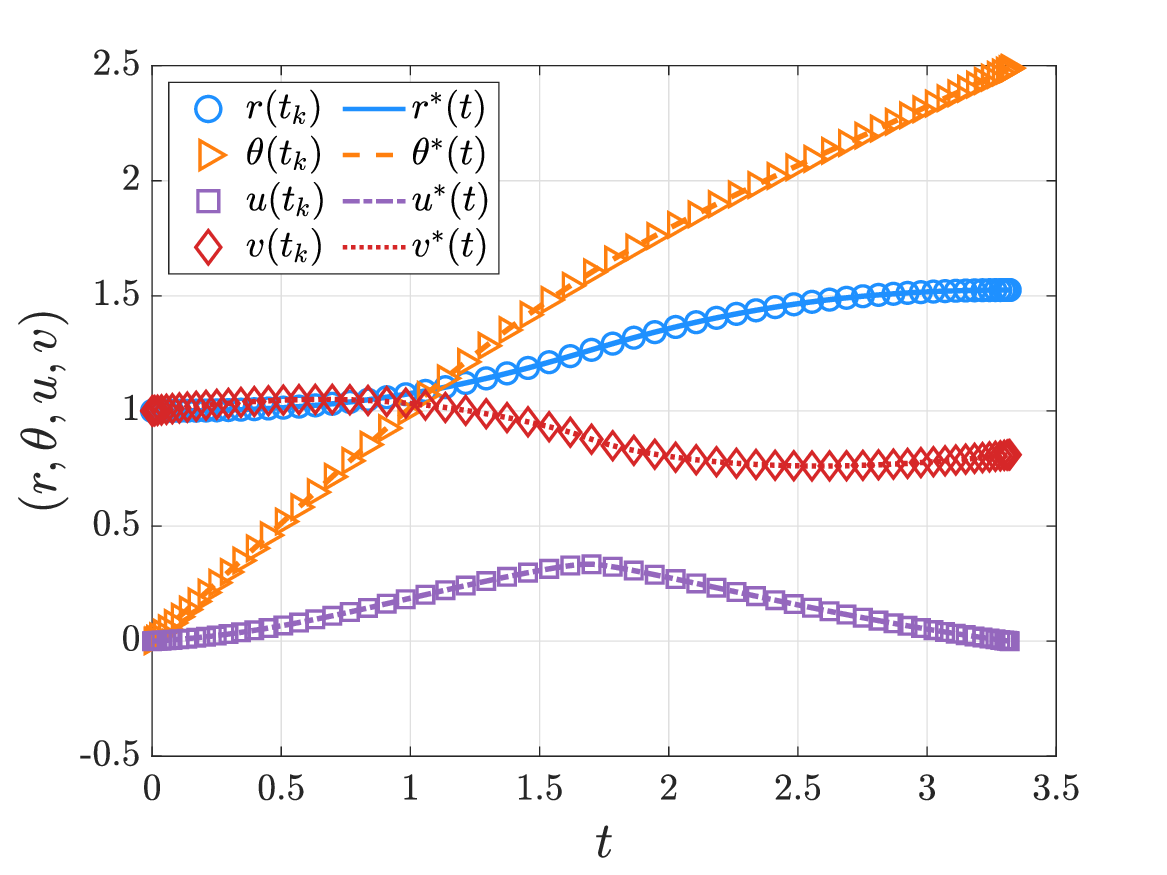}
        \caption{State solution.}\label{subfig: OrbitRaising states}
    \end{subfigure}%
    \begin{subfigure}{0.45\textwidth}
        \centering
        \includegraphics[width=\linewidth]{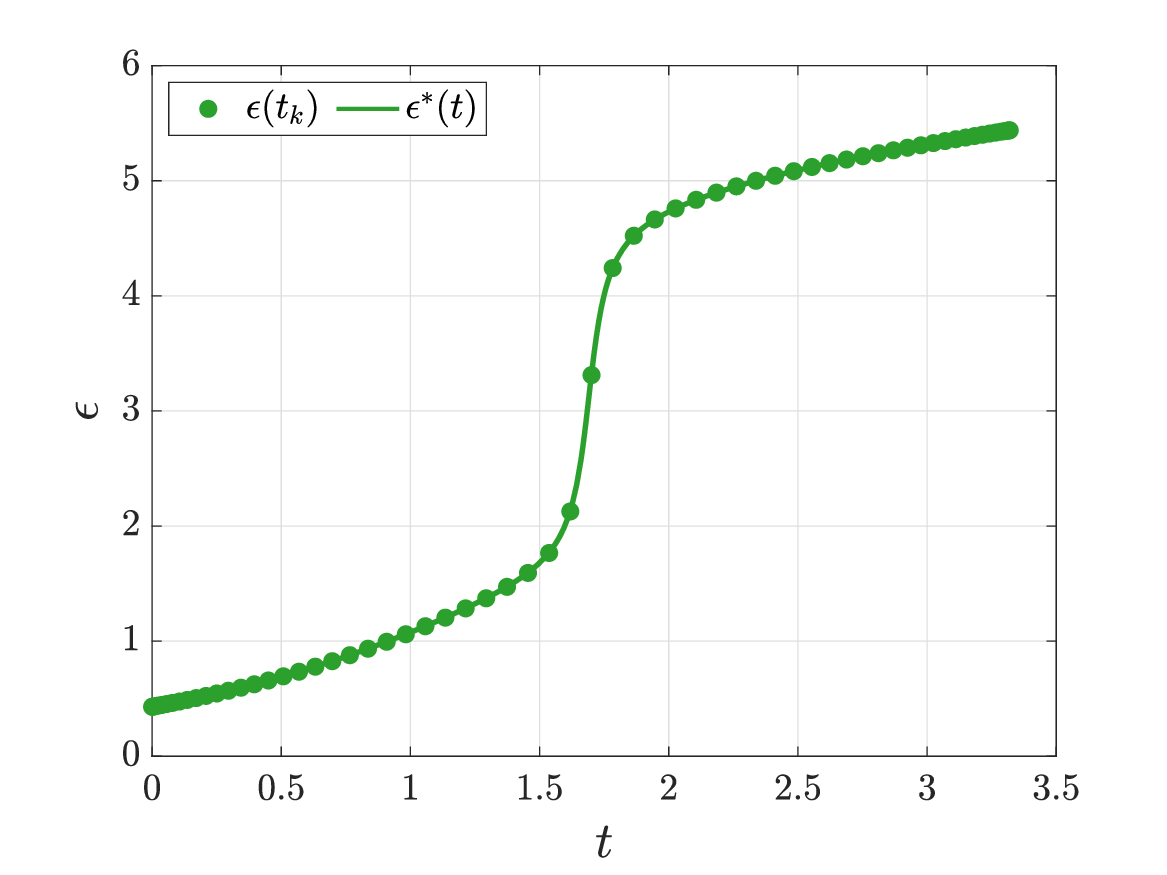}
        \caption{Control Solution.}\label{subfig: OrbitRaising control}
    \end{subfigure}\\
    \begin{subfigure}{0.45\textwidth}
        \centering
        \includegraphics[width=\linewidth]{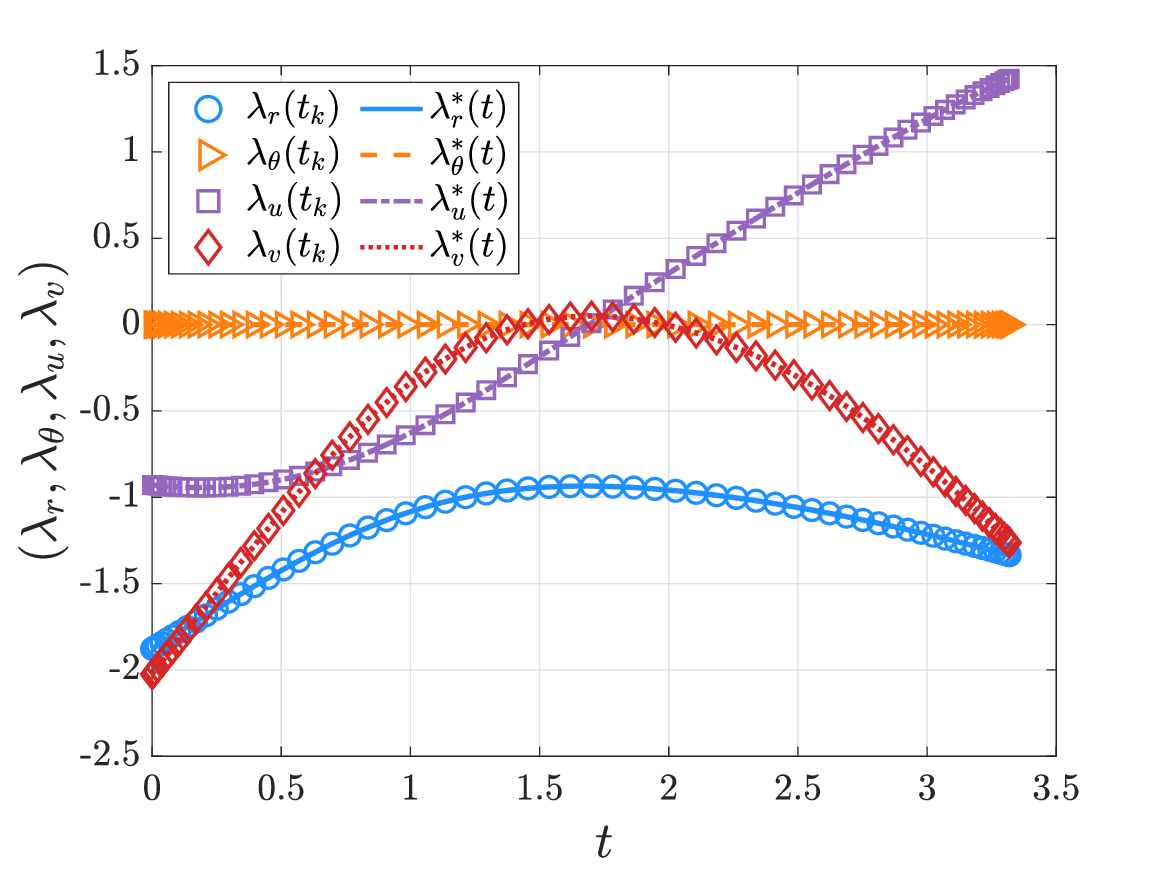}
        \caption{Costate solution.}\label{subfig: OrbitRaising costate}
    \end{subfigure}
    \caption{Solution for Example 2 using LGL-I collocation with $N=64$ LGL points alongside solution obtained using indirect shooting.}\label{fig: OrbitRaising Solution}
\end{figure}

Consider the following optimal control problem \cite{Bryson1975,Fahroo2001}.  
\begin{equation}\label{orbit raising problem}
    \tx{minimize}~ -r(t_f) \quad \tx{subject to}~ \begin{cases}\ds
        \ddt{r}{t} = u, & r(0)=1, \\[5pt] \ds
        \ddt{\theta}{t} = \frac{v}{r}, & \theta(0) = 0,\\[5pt] \ds
        \ddt{u}{t} = \frac{v^2}{r} - \frac{1}{r^2} + A(t)\sin\epsilon, & u(0)=0, ~u(t_f) = 0,\\[5pt] \ds
        \ddt{v}{t} = -\frac{uv}{r} + A(t)\cos\epsilon, & v(0) = 1, ~v(t_f) = \sqrt{1/r(t_f)},
    \end{cases} 
\end{equation}
where $A(t) = T/(m_0 - |\dot{m}|t)$, $T$ is the thrust magnitude, $m_0$ is the initial mass, and $\dot{m}$ is the constant fuel consumption rate. The normalized constants are given by $t_f=3.32$, $T=0.1405$, $m_0=1.0$, and $|\dot{m}|=0.0749$. First, a baseline solution to the optimal control problem in Eq.~\eqref{orbit raising problem} was obtained using indirect multiple-shooting. Specifically, the Hamiltonian boundary value problem (HBVP), derived from the variational first-order optimality conditions, was solved to a tolerance of $10^{-12}$ using the MATLAB functions {\tt fsolve} (for root-finding) and the differential equation solver {\tt ode113} (for simulation of the state-costate Hamiltonian system). Second, the optimal control problem given in Eq.~\eqref{orbit raising problem} was discretized into an NLP using the LGL-I method of this paper and a second NLP using the LGL method of Ref.~\cite{Fahroo2001}.  Both NLPs were solved using the NLP solver IPOPT with a convergence tolerance of $10^{-8}$.

\subsubsection{Accuracy Analysis Using a Single Interval\label{sect:example-2-single-interval}}
This example was previously solved in \cite{Fahroo2001} using the LGL collocation method with $N=64$ collocation points. 
The initial guess was generated by numerically integrating the equations of motion given in Eq.~\eqref{orbit raising problem} with a constant control $\epsilon(t)=0.001$, as described in \cite{Fahroo2001}. The state, control, and costate solutions obtained with the LGL-I method are shown in Fig.~\ref{fig: OrbitRaising Solution}, with the baseline solution generated via indirect shooting denoted by the superscript asterisk, e.g. $r^*(t)$. By visual inspection, the LGL-I collocation solution is in close agreement with the baseline solution. 

\begin{figure}[htb!]
    \centering
    \includegraphics[width=\linewidth]{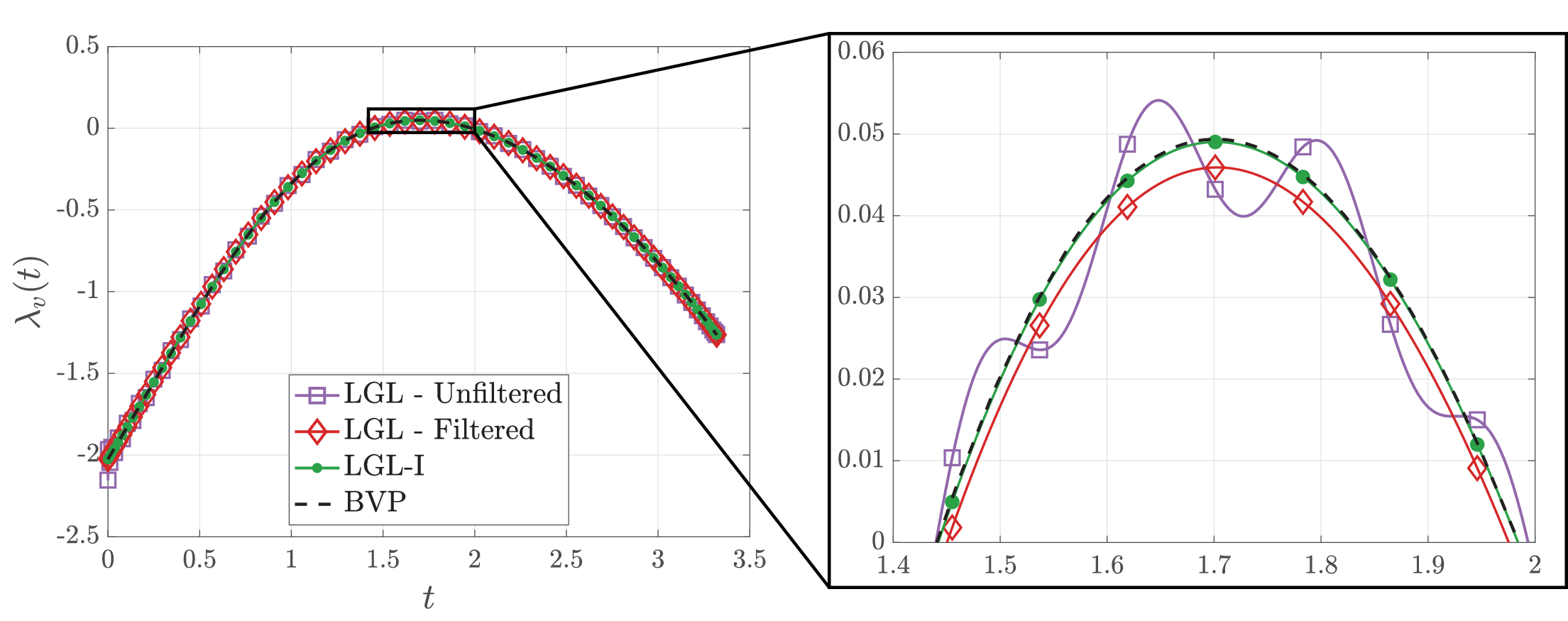}
    \caption{Comparison between $\lambda_v$ for Example 2 using different LGL collocation methods, with and without data filtering.}\label{fig: OrbitRaising lambdav Solution}
\end{figure}

As in Example 1, Example 2 demonstrates the same oscillatory behavior in the costate approximation obtained using the LGL\cite{Fahroo2001} method, but this oscillatory behavior is not present when using the LGL-I method. Figure~\ref{fig: OrbitRaising lambdav Solution} shows the costate approximation for $\lambda_v$. The raw costate solution from implementing the LGL\cite{Fahroo2001} method is labeled unfiltered, while the filtered costate estimate, which matches the results found in \cite{Fahroo2001}, is obtained by applying the filter shown in Appendix~\ref{sect: appendix filter} to the unfiltered costate. The LGL-I costate and filtered LGL\cite{Fahroo2001} costate are similar in trend, with the LGL-I costate approximating the baseline solution more closely. Thus, a key observation is that the LGL-I collocation method developed in this work achieves a higher accuracy costate approximation than LGL\cite{Fahroo2001} collocation and does so without the need for post-processing. Similar behavior was observed in all four costate components. The maximum absolute errors in the state, control, and costate components, defined similarly to those in Eq.~\eqref{abs error}, are provided in Table~\ref{tab:error comparison}. Because this example does not have an analytic solution, the error is approximated by referencing the baseline solution obtained using indirect multiple-shooting. The state and control errors obtained using the LGL-I collocation method are very similar in magnitude to those of the LGL\cite{Fahroo2001} method. It is observed, however, that the LGL-I method shows improved costate accuracy of at least one order of magnitude compared with the LGL\cite{Fahroo2001} method. 

\begin{table}[h!] 
\renewcommand{\arraystretch}{1.3} 
\setlength{\tabcolsep}{3pt}
\caption{Solution errors for Example 2 using a single interval.} 
\label{tab:error comparison}
\centering
\begin{tabular}{c | >{\centering\arraybackslash}p{2cm}  >{\centering\arraybackslash}p{2cm}  >{\centering\arraybackslash}p{2cm}}
\hline\hline
 & $\ds\max_{z\in\{r,\theta,u,v\}}e_z$ & $e_{\epsilon}$ & $\ds\max_{z\in\{r,\theta,u,v\}}e_{\lambda_z}$ \\
\hline
LGL-I & $4.9\scinot{-4}$ & $9.4\scinot{-3}$ & $1.1\scinot{-3}$ \\
LGL - Unfiltered & $9.5\scinot{-4}$ & $7.7\scinot{-2}$ & $3.0\scinot{0}$ \\
LGL - Filtered & --- & --- & $4.5\scinot{-2}$ \\
\hline\hline
\end{tabular}
\end{table}

\subsubsection{Accuracy Analysis Using Multiple Intervals}

As noted in Section~\ref{subsect: ex2}, it is uncommon to use such a high-degree global polynomial in practice to approximate the state. If Example 2 is solved again but using a multiple-interval form of the LGL-I method with 20 evenly spaced mesh intervals and $N=6$ collocation points in each interval, the maximum absolute state, control, and costate errors are $4.9\scinot{-5}$, $2.4\scinot{-4}$, and $2.2\scinot{-5}$, respectively. These multiple-interval results demonstrate an accuracy improvement of at least an order of magnitude when compared with the earlier results using a single interval. 

\section{Conclusions}\label{sect:conclusion}

A novel Legendre-Gauss-Lobatto direct orthogonal collocation method has been presented. The method approximates the right-hand side vector field of the differential equations using a basis of Lagrange polynomials and then evaluates the integrals of these Lagrange polynomials exactly using Legendre-Gauss-Lobatto quadrature. The resulting integral form of the presented LGL collocation method can then be cast into an equivalent derivative-like form. The transformed adjoint systems and corresponding costate estimates for both forms have been derived, with the transformed adjoint systems shown to be equivalent and full-rank. A second integral form was then derived by augmenting the first integral form with an integral to an additional noncollocated point, and it was shown that this second integral form is equivalent to a previously developed full-rank LGL derivative form. It was also shown that the inclusion of this noncollocated point is superfluous and its particular placement on the domain is arbitrary.  Finally, the Legendre-Gauss-Lobatto collocation method developed in this paper was applied to two examples where convergence of the state, control, and costate was demonstrated numerically.  

\appendix

\section{Digital Filter Implementation}\label{sect: appendix filter}

\begin{lstlisting}[language=Matlab]
function csOut = OptimalPrimeLPMcsFilt(lglPoints, csIn)
%%%%%%%%%%%%%%%%%%%%%%%%%%%%%%%%%%%%%%%%%%%%%%%%%%%%%%%%%%%%%%%%%%%%%%%%%%%%%%
%
% OptimalPrimeLPMcsFilt.m
%
% Filters the LGL (2001) costate. Digital filter designed by Tom P.
% Thorvaldsen of The Charles Stark Draper Laboratory, Circa 2000. The steps
% of the algorithm were communicated by Tom P. Thorvaldsen to Anil V. Rao 
% in 2009. These steps were then implemented by Dr. Michael A. Patterson in  
% 2009, and implemented by Dr. Michael A. Patterson in the code shown below.  
% The code shown below is used by permission of Michael A. Patterson.
%
%%%%%%%%%%%%%%%%%%%%%%%%%%%%%%%%%%%%%%%%%%%%%%%%%%%%%%%%%%%%%%%%%%%%%%%%%%%%%%

%number of endpoints removed from the data,
numCut = 2;

csSection = csIn(numCut+1:end-numCut,:);

% filter constants needed for 3 point filter
B = [0.25, 0.5, 0.25];
A = 1;
% filter the section data
% NOTE: filter shifts data
filtOut = filter(B,A,csSection);

% shift filter data back (X)
% match lgl points to filtered data (Y)
YY = filtOut(3:end,:);
XX = lglPoints(numCut+2:end-(numCut+1),1);

csOutI = interp1(XX,YY,lglPoints(1:numCut+1,:),'linear','extrap');
csOutF = interp1(XX,YY,lglPoints(end-numCut:end,:),'linear','extrap');

csOut = [csOutI; YY; csOutF];
\end{lstlisting}

\section{Multiple-Interval LGL Collocation for Optimal Control}\label{sect: appendix}
Without loss of generality, consider the following form of an optimal control problem. Determine the state $\m{x}(t)\in\R^{n_x}$, control $\m{u}(t)\in\R^{n_u}$, initial time $t_0$, and final time $t_f$ that minimize the objective functional subject to dynamic constraints and boundary conditions. That is,
\begin{equation}
    \tx{minimize } \Phi(\m{x}(t_0),t_0,\m{x}(t_f),t_f)    \tx{ subject to } \left\{ \begin{aligned}
            \ddt{\m{x}(t)}{t} &= \m{f}(\m{x}(t),\m{u}(t)),\\
            \m{0} &= \m{b}(\m{x}(t_0),t_0,\m{x}(t_f),t_f),
        \end{aligned}\right.
\end{equation}
where the functions $\Phi$, $\m{f}$, and $\m{b}$ are defined by the mappings 
\begin{align}
    \Phi & : \R^{n_x} \times \R \times \R^{n_x} \times \R \rightarrow \R, \\
    \m{f} & : \R^{n_x} \times \R^{n_u} \rightarrow \R^{n_x},\\
    \m{b} &: \R^{n_x} \times \R \times \R^{n_x} \times \R \rightarrow \R^{n_b}.
\end{align}
The multiple-interval integral form of the LGL collocation method is implemented as follows. First, suppose the domain $t\in[t_0,t_f]$ is transformed to $\tau\in[-1,+1]$ using Eq.~\eqref{affine trans}. Next, suppose the domain $\tau\in[-1,+1]$ is segmented into a $K$-interval mesh with $K+1$ mesh points $(T_0,\ldots,T_K)$ such that $T_0=-1$, $T_K=+1$, $T_0<T_1<\cdots<T_K$ (that is, the mesh points are strictly monotonically increasing), and $\C{I}_k=[T_{k-1},T_k]$ denotes the $k^{\tx{th}}$ mesh interval. The mesh intervals have the property that $\bigcup_{k=1}^{K} \C{I}_k = [-1,+1]$ and $\C{I}_k \cap \C{I}_{k+1} = \{ T_k \},~(k=1,\ldots,K-1)$. The differential equation vector field is then approximated in each mesh interval as
\begin{equation}\label{multi f approx}
    \hat{\m{f}}\intk(\tau)= \sum_{j=1}^{N_k} \m{F}_j\intk L_j\intk(\tau), \quad \m{F}\intk_j = \m{f}\left(\m{X}\intk_j,\m{U}\intk_j\right), \quad (k=1,\ldots,K),
\end{equation}
where
\begin{equation}
    L_j\intk(\tau) = \prod_{\substack{i = 1 \\ i \neq j}}^{N_k} \frac{\tau - \tau\intk_i}{\tau\intk_j - \tau\intk_i}, \quad (j=1,\ldots,N_k),
\end{equation}
are the Lagrange polynomials of degree $N_k-1$ with support points at the $N_k$ LGL points $(\tau\intk_1,\ldots,\tau\intk_{N_k})$ in interval $k$, and $(\m{X}\intk, \m{U}\intk)$ are the approximations of the state and the control at each of the $N_k$ LGL points in interval $k\in\{1,\ldots,K\}$. The computational domain of each interval is $\tau\intk\in[T_{k-1},T_k],~(k=1,\ldots,K)$. Following the same steps of Section~\ref{sect:LGL int} in each mesh interval, the collocation constraints are constructed in integral form by integrating the polynomial vector field approximation of Eq.~\eqref{multi f approx} to each LGL point in a given interval. Thus, the multiple-interval analog of the NLP in Eq.~\eqref{integral NLP} is given by
\begin{equation}
    \begin{aligned}
        \tx{minimize} \quad & \Phi\left(\m{X}_{1}\intO,t_0,\m{X}_{N_K}\intK,t_f\right)  \\
        \tx{subject to} \quad & \left\{ \begin{aligned}
            \m{0} &=  \begin{bmatrix} \m{1} & -\m{I}_{N_k-1} \end{bmatrix} \m{X}\intk + \frac{t_f-t_0}{2}\tilde{\m{A}}\intk\m{F}\intk, \quad (k=1,\ldots,K),\\
            \m{0} &= \m{b}\left(\m{X}_{1}\intO,t_0,\m{X}_{N_K}\intK,t_f\right),\\
        \end{aligned}\right.
    \end{aligned}
\end{equation}
where $w_j\intk,~(j=1,\ldots,N_k)$, are the LGL quadrature weights in the $k^{\tx{th}}$ mesh interval, and $\tilde{\m{A}}\intk:=\m{A}\intk_{(2:N_k,:)}$, where 
\begin{equation}
    A\intk_{ij} := \int_{T_{k-1}}^{\tau\intk_i} L\intk_j(\tau)\dt{\tau}, \quad (i,j=1,\ldots,N_k),
\end{equation}
are the elements of the $N_k\times N_k$ integration matrix in the $k^{\tx{th}}$ mesh interval. Furthermore, continuity in the state across interior mesh points is enforced via the condition
\begin{equation}
    {\m{X}}\intk_{N_k} = {\m{X}}^{(k+1)}_1, \quad (k=1,\ldots,K-1).
\end{equation}


\section*{Acknowledgments}
The authors gratefully acknowledge support for this research from the U.S. National Science Foundation under the Graduate Research Fellowship Program and grant CMMI-2031213 and from the U.S. Office of Naval Research under grant N00014-22-1-2397. The authors also acknowledge Tom Thorvaldsen of The Charles Stark Draper Laboratory Inc. for providing the steps required for the implementation of the LGL costate filter.  Finally, the authors acknowledge Dr.~Michael A.~Patterson for providing the implementation of the costate filter provided by Tom Thorvaldsen as found in the Appendix of this paper.


\bibliographystyle{aiaa}

\end{document}